\tikzset{treeedge/.style = {above, sloped}}
\tikzset{treenode/.style = {circle, minimum size = 3pt, inner sep = 0, draw, fill}}
\title{The equivalence between many-to-one polygraphs and opetopic sets}
\author{C\'edric Ho Thanh}
\address{Research Institute for Foundations of Computer Science (IRIF), Paris
University, Paris, France}
\subjclass[2010]{Primary 18D50; Secondary 18C20}
\keywords{Opetope, Polynomial functor, Polygraph, Computad}
\begin{document}

\begin{abstract}
	From the polynomial approach to the definition of opetopes of Kock et al.,
	we derive a category of opetopes, and show that its set-valued presheaves,
	or opetopic sets, are equivalent to many-to-one polygraphs.	As an immediate
	corollary, we establish that opetopic sets are equivalent to multitopic
	sets, introduced and studied by Harnick et al, and we also address an open
	question of Henry.
\end{abstract}

\maketitle

\begin{small}
    \tableofcontents
\end{small}

\section{Introduction}

Opetopes were originally introduced by Baez and Dolan in \cite{Baez1998} as an
algebraic structure to describe compositions and coherence laws in weak higher
dimensional categories. They differ from other shapes (such as globular or
simplicial) by their (higher) tree structure, giving them the informal
designation of ``many-to-one''. Pasting opetopes give rise to opetopes of
higher dimension (it is in fact how they are defined!), and the analogy between
opetopes and cells in a free higher category starts to emerge. On the other
hand, polygraphs (also called computads) are higher dimensional directed graphs
used to generate free higher categories by specifying generators and the way
they may be pasted together (by means of source and targets).

In this paper, we relate opetopes and polygraphs in a direct way. Namely, we
define a category $\bbOO$ whose objects are opetopes, in such a way that the
category of its $\Set$-valued presheaves, or opetopic sets, is equivalent to
the category of many-to-one polygraphs. This equivalence was already known from
\cite{Harnik2002, Harnik2008, Hermida2000}, however the proof is very indirect.
The recent work of Henry \cite{Henry2019} showed the category of many-to-one
polygraphs (among many others) to be a presheaf category, but left the
equivalence between ``opetopic plexes'' (serving as shapes for many-to-one
polygraphs in his paper) and opetopes open. We establish this in our present
work.

The notion of multitope \cite{Hermida2002, Harnik2008} is related to that of
opetope, and has been developed based on similar motivations. However the
approaches used are different: \emph{ope}topes are based on \emph{ope}rads
(specifically, $T_n$-operads, where $T_n$ is a certain sequence of cartesian
monads) \cite{Leinster2004}, while \emph{multi}topes are based on (symmetric)
\emph{multi}categories. It is known that multitopic sets are equivalent to
many-to-one polygraphs \cite{Harnik2008, Harnik2002}, and in particular our
present contribution reasserts the equivalence between multitopic and opetopic
sets.

\subsection*{Plan}

We begin by recalling elements of the theory of polynomial functors and
polynomial monads \cref{sec:polynomial-functors-and-monads}. This formalism is
at the base of our chosen approach to opetopes, which we present in
\cref{sec:opetopes}. In \cref{sec:polygraphs}, we review some basic polygraphs
theory, and pay special attention to those that are many-to-one. Finally, we
state and prove the equivalence between opetopic sets and many-to-one
polygraphs in \cref{sec:equialence}.

\subsection*{Acknowledgments}

I would like to thank my PhD advisors, Pierre-Louis Curien and Samuel Mimram,
for their kind attention and guidance. This project has received funding from
the European Union's Horizon 2020 research and innovation program under the
Marie Sklodowska--Curie grant agreement No 665850.

\section{Polynomial functors and polynomial monads}
\label{sec:polynomial-functors-and-monads}

We survey elements of the theory of polynomial functors, trees, and monads. For
more comprehensive references, see \cite{Kock2011, Gambino2013}.

\subsection{Polynomial functors}

\begin{definition}
    [{Polynomial functor \cite[paragraph 1.4]{Gambino2013}}]
    \label{def:polynomial-functor}
    A \emph{polynomial functor $P$} is a diagram in
    $\Set$ of the form
    \begin{equation}
        \label{eq:polynomial-functor}
        \polynomialfunctor{I}{E}{B}{J.}{s}{p}{t}
    \end{equation}
    We say that $P$ is a \emph{polynomial endofunctor}
    if $I = J$. In this case, we also say that
    $P$ is a \emph{polynomial functor over $I$}. We say that $P$ is
    \emph{finitary}if the fibres of $p : E
    \longrightarrow B$ are finite sets. We will always assume polynomial
    functors and endofunctors to be finitary.

    We use the following terminology for a polynomial functor $P$ as in
    \cref{eq:polynomial-functor}, which is motivated by the intuition that a
    polynomial functor encodes a multi-sorted signature of function symbols.
    The elements of $B$ are called the \emph{nodes}or
    \emph{operations}of $P$, and for every node $b$, the
    elements of the fibre $E (b) \eqdef p^{-1} (b)$ are called the
    \emph{inputs}of $b$. The elements of $I$ are called the
    \emph{input colors}or \emph{input
    sorts}of $P$, and the elements of $J$ are \emph{output
    colors}or \emph{output sort}. For every input $e$ of a
    node $b$, we denote its color by $s_e (b) \eqdef s (e)$.
    \[
        \tikzinput{polynomial-functors}{operation}
    \]
\end{definition}

\begin{definition}
    [{Morphism of polynomial functor}]
    \label{def:polynomial-endofunctor-morphism}
    A morphism $f$ from a polynomial functor $P$ over $I$ (on the first row) to
    a polynomial functor $P'$ over $I'$ (on the second row) is a commutative
    diagram of the form
    \[
        \begin{tikzcd}
            I
                \ar[d, "f_0"'] &
            E
                \pullbackcorner \ar[r, "p"] \ar[d, "f_2" left]
                \ar[l, "s" above] &
            B
                \ar[r, "t"] \ar[d, "f_1" left] &
            I
                \ar[d, "f_0" left] \\
            I' &
            E'
                \ar[r, "p'"] \ar[l, "s'" above] &
            B'
                \ar[r, "t'"] &
            I'
        \end{tikzcd}
    \]
    where the middle square is cartesian (i.e. is a pullback square). If $P$
    and $P'$ are both polynomial functors over $I$, then a morphism from $P$ to
    $P'$ \emph{over $I$} is a commutative diagram as above, but where $f_0$ is
    required to be the identity \cite[paragraph 0.1.3]{Kock2011} \cite[section
    2.5]{Kock2010}. Let $\PolyEnd$ denote the
    category of polynomial functors and morphisms of polynomial functors, and
    $\PolyEnd (I)$ the category of polynomial
    functors over $I$ and morphisms of polynomial functors over $I$.
\end{definition}

\subsection{Trees}
\label{sec:trees}

The combinatorial notion of a tree fits nicely in the framework of polynomial
functors. We now state the definition of a polynomial tree, and refer the
reader to \cite[section 1.0.3]{Kock2011} for more details about the intuition
behind it.

\begin{definition}
    [{Polynomial tree \cite[section 1.0.3]{Kock2011}}]
    \label{def:polynomial-tree}
    A polynomial functor $T$ given by
    \[
        \polynomialfunctor{T_0}{T_2}{T_1}{T_0}{s}{p}{t}
    \]
    is a \emph{polynomial tree} (or just
    \emph{tree}) if
    \begin{enumerate}
        \item the sets $T_0$, $T_1$ and $T_2$ are finite (in particular, each
        node has finitely many inputs); by convention we assume $T_0 \neq
        \emptyset$;
        \item the map $t$ is injective;
        \item the map $s$ is injective, and the complement of its image $T_0 -
        \im s$ has a single element, called the \emph{root};
        \item let $T_0 = T_2 + \{ r \}$, with $r$ the root, and define the
        \emph{walk-to-root} function $\sigma$ by
        $\sigma (r) = r$, and otherwise $\sigma (e) = t p (e)$; then we ask
        that for all $x \in T_0$, there exists $k \in \bbNN$ such that
        $\sigma^k (x) = r$.
    \end{enumerate}
    We call the colors of a tree its \emph{edges} and the inputs
    of a node the \emph{input edges} of that node.

    Let $\Tree$ be the full
    subcategory of $\PolyEnd$ whose objects are trees. Note that it is the
    category of \emph{symmetric} or \emph{non-planar} trees (the automorphism
    group of a tree is in general non-trivial) and that its morphisms
    correspond to inclusions of non-planar subtrees. An \emph{elementary tree}
    is a tree with at most one node, and we
    write $\elTree$ for the full
    subcategory of $\Tree$ spanned by elementary trees.
\end{definition}

\begin{definition}
    [$P$-tree]
    \label{def:p-tree}
    For $P \in \PolyEnd$, the category $\tree P$ of
    \emph{$P$-trees} is the slice $\Tree / P$. If $f : P
    \longrightarrow Q$ is a morphism of polynomial functors, then it induces a
    natural functor $f_* : \tree P \longrightarrow \tree Q$ by postcomposition.
\end{definition}

\begin{notation}
    \label{not:p-tree}
    A $P$-tree $T \in \tree P$ is a morphism from a polynomial tree, which we
    shall denote by $\underlyingtree{T}$, to $P$, as in $T : \underlyingtree{T}
    \longrightarrow P$. We point out that $\underlyingtree{T}_1$ is the set of
    nodes of the $P$-tree $T$, while $T_1 : \underlyingtree{T}_1
    \longrightarrow P_1$ provides a \emph{decoration} of the nodes of
    $\underlyingtree{T}$ by operations of $P$, and likewise for edges.
\end{notation}

\begin{definition}
    [Address]
    \label{def:address}
    Let $T \in \Tree$ be a polynomial tree and $\sigma$ be its walk-to-root
    function (\cref{def:polynomial-tree}). We define the \emph{address}
    function $\addr$on edges inductively as
    follows:
    \begin{enumerate}
        \item if $r$ is the root edge, let $\addr r \eqdef []$,
        \item if $i \in T_0 - \{ r \}$ and if $\addr \sigma (i) = [x]$, define
        $\addr i \eqdef [x e]$, where $e \in T_2$ is the unique element such
        that $s(e) = i$.
    \end{enumerate}
    Thus an address is a sequence of elements of $T_2$, enclosed by brackets.
    The address of a node $b \in T_1$ is simply $\addr b \eqdef \addr t (b)$.
    Note that this function is injective since $t$ is. Let
    $T^\nodesymbol$denote its image,
    the set of \emph{node addresses}of $T$, and let
    $T^\leafsymbol$be the set of
    addresses of leaf edges, i.e. those edges not in the
    image of $t$.

    Assume now that $U : \underlyingtree{U} \longrightarrow P$ is a $P$-tree.
    If $b \in \underlyingtree{U}_1$ has address $\addr b = [p]$, write
    $\src_{[p]} U \eqdef U_1 (b)$. For convenience, we let $U^\nodesymbol
    \eqdef \underlyingtree{U}^\nodesymbol$, and $U^\leafsymbol \eqdef
    \underlyingtree{U}^\leafsymbol$.
\end{definition}

\begin{remark}
    The formalism of addresses is a useful bookkeeping syntax for the
    operations of grafting and substitution on trees. The syntax of addresses
    will extend to the category of opetopes and will allow us to give a precise
    description of the composition of morphisms in the category of opetopes
    (see \cref{def:o}) as well as certain constructions on opetopic
    sets.
\end{remark}

\begin{notation}
    \label{not:marked-tree}
    We denote by $\treewithleaf P$the set of $P$-trees with a marked leaf, i.e.
    endowed with the address of one of its leaves. Similarly, we denote by
    $\treewithnode P$ the set of $P$-trees with a marked node.
\end{notation}

\begin{definition}
    [Elementary $P$-trees]
    \label{def:elementary-p-trees}
    Let $P$ be a polynomial endofunctor as in equation
    \cref{eq:polynomial-functor}. For $i \in I$, define $\itree{i} \in \tr
    P$as having underlying tree
    \begin{equation}
        \label{eq:polynomial-functor:itree}
        \polynomialfunctor
            {\{i\}}{\emptyset}{\emptyset}{\{i \},}
            {}{}{}
    \end{equation}
    along with the obvious morphism to $P$, that which maps $i$ to $i \in I$.
    This corresponds to a tree with no nodes and a unique edge, decorated by
    $i$. Define $\ytree{b} \in \tr P$, the
    \emph{corolla}at $b$, as having underlying tree
    \begin{equation}
        \label{eq:polynomial-functor:ytree}
        \polynomialfunctor
            {s(E(b)) + \{*\}}{E (b)}{\{b \}}{s(E(b)) + \{*\},}
            {s}{}{}
    \end{equation}
    where the right map sends $b$ to $*$, and where the morphism $\ytree{b}
    \longrightarrow P$ is the identity on $s (E (b)) \subseteq I$, maps $*$ to
    $t (b) \in I$, is the identity on $E (b) \subseteq E$, and maps $b$ to $b
    \in B$. This corresponds to a $P$-tree with a unique node, decorated by
    $b$. Observe that for $T \in \tr P$, giving a morphism $\itree{i}
    \longrightarrow T$ is equivalent to specifying the address $[p]$ of an
    edge of $T$ decorated by $i$. Likewise, morphisms of the form
    $\ytree{b} \longrightarrow T$ are in bijection with addresses of nodes
    of $T$ decorated by $b$.
\end{definition}

\begin{remark}
    \label{rem:elementary-p-trees:addresses}
    Let $P$ be a polynomial endofunctor as in \cref{eq:polynomial-functor}.
    \begin{enumerate}
        \item Let $i \in I$ be a color of $P$. Since $\itree{i}$ does not have
        any nodes, the set $\itree{i}^\nodesymbol$ of its node addresses is
        empty. On the other hand, the set of its leaf addresses is
        $\itree{i}^\leafsymbol = \left\{ [] \right\}$, since the unique leaf is
        the root edge.
        \item Let $b \in B$ be an operation of $P$. Then $\ytree{b}^\nodesymbol
        = \left\{ [] \right\}$ since the only node is that above the rood edge.
        For leaves, we have $\ytree{b}^\leafsymbol = \left\{ [e] \mid e \in E
        (b) \right\}$.
    \end{enumerate}
\end{remark}

\begin{definition}
    [Grafting]
    \label{def:grafting}
    For $S, T \in \tr P$, $[l] \in S^\leafsymbol$ such that the
    leaf of $S$ at $[l]$ and the root edge of $T$ are decorated by the
    same $i \in I$, define the \emph{grafting}$S
    \graft_{[l]} T$ of
    $S$ and $T$ on $[l]$ by the following pushout (in $\tree P$):
    \begin{equation}
        \label{eq:polynomial-functor:grafting}
        \pushoutdiagram
            {\itree{i}}{T}{S}{S \graft_{[l]} T .}
            {[]}{[l]}{}{}
    \end{equation}
    Note that if $S$ (resp. $T$) is a trivial tree, then $S \graft_{[l]} T = T$
    (resp. $S$). We assume, by convention, that the grafting operator $\circ$
    associates to the right. In particular $\itree{i} \graft_{[]} T = T$ and $S
    \graft_{[l]} \itree{i} = S$.
\end{definition}

\begin{lemma}
    \label{lemma:grafting-addresses}
    For $S, T \in \tr P$, $[l] \in S^\leafsymbol$ such that the grafting $S
    \graft_{[l]} T$ is defined, we have
    \[
        (S \graft_{[l]} T)^\nodesymbol \speq S^\nodesymbol +
            \left\{ [lp] \mid [p] \in T^\nodesymbol \right\} ,
        \qqquad
        (S \graft_{[l]} T)^\leafsymbol \speq S^\leafsymbol - \left\{ [l] \right\} +
            \left\{ [lp] \mid [p] \in T^\leafsymbol \right\} .
    \]
\end{lemma}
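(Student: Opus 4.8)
The plan is to unwind the defining pushout in $\tree P$ and track what it does to the sets of node and leaf addresses. By \cref{def:grafting}, $S \graft_{[l]} T$ is the pushout of $S \xleftarrow{[l]} \itree{i} \xrightarrow{[]} T$, where the left map picks out the leaf edge of $S$ at $[l]$ (using the description from \cref{def:elementary-p-trees} of morphisms $\itree{i} \to S$ as edges decorated by $i$) and the right map picks out the root edge of $T$. Since $\itree{i}$ has a single edge and no nodes (\cref{rem:elementary-p-trees:addresses}), this pushout has the effect of gluing the root edge of $T$ onto the leaf edge of $S$ at address $[l]$, identifying those two edges and taking the disjoint union of everything else. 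First I would record this explicit description of the underlying tree of $S \graft_{[l]} T$: its edge set is $\underlyingtree{S}_0 +_{\{i\}} \underlyingtree{T}_0$, its node set is $\underlyingtree{S}_1 + \underlyingtree{T}_1$, and its source/target maps are inherited from those of $S$ and $T$.

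Next I would compute addresses. Recall from \cref{def:address} that the address of an edge is built by concatenating, along the walk-to-root, the node-address segments traversed, and the address of a node is the address of its target edge. The key observation is about the walk-to-root function $\sigma$ of $S \graft_{[l]} T$: for an element coming from $S$ it agrees with $\sigma_S$, so its address is unchanged; for an element coming from $T$, its walk to the root of $S \graft_{[l]} T$ first reaches the root of $T$ (which is now the leaf of $S$ at $[l]$) and then continues along $\sigma_S$ up to the root of $S$. Hence an edge (or node) of $T$ with address $[p]$ in $T$ acquires address $[lp]$ in $S \graft_{[l]} T$ — one shows this by an easy induction on the length of the walk-to-root in $T$, using clause (2) of \cref{def:address} at each step and the base case that the root of $T$ maps to the leaf of $S$ at $[l]$, which has address $[l]$.

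From this the two formulas fall out. For node addresses: every node of $S \graft_{[l]} T$ comes either from $S$ (address unchanged, contributing $S^\nodesymbol$) or from $T$ (address $[p] \mapsto [lp]$, contributing $\{[lp] \mid [p] \in T^\nodesymbol\}$), and these are disjoint since one family of addresses extends $[l]$ and the other does not (as $[l]$ is a \emph{leaf} address of $S$, hence not a node address of $S$, and no node address of $S$ has $[l]$ as a prefix — again because $[l]$ is a leaf). For leaf addresses: the leaves of $S \graft_{[l]} T$ are the leaves of $S$ other than the one at $[l]$ (which is no longer a leaf, being the root of $T$), contributing $S^\leafsymbol - \{[l]\}$, together with the leaves of $T$, whose addresses are reindexed as $[p] \mapsto [lp]$, contributing $\{[lp] \mid [p] \in T^\leafsymbol\}$; disjointness is as before.

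The main obstacle I anticipate is not conceptual but bookkeeping: one must be careful that the pushout is taken in $\tree P$ (the slice over $P$) and that it genuinely restricts along $\underlyingtree{-}$ to the naive gluing of underlying trees — i.e. that grafting commutes with passage to underlying trees and that the result still satisfies the four axioms of \cref{def:polynomial-tree} (in particular that $\sigma$ still terminates at the new root). The degenerate cases where $S$ or $T$ is a trivial tree, already flagged in \cref{def:grafting}, should be checked to be consistent with the formulas (they are: if $T = \itree{i}$ then $T^\nodesymbol = \emptyset$ and $T^\leafsymbol = \{[]\}$, and the right-hand sides collapse to $S^\nodesymbol$ and $S^\leafsymbol$; symmetrically for $S$ trivial).
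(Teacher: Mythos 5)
The paper states this lemma without proof, treating it as an elementary bookkeeping consequence of the pushout definition of grafting, so there is no argument in the paper to compare against. Your proposal is a correct proof of it, and it takes the route one would expect: since $\itree{i}$ has a single edge and no nodes, the pushout identifies the underlying tree of $S \graft_{[l]} T$ with the gluing of $\underlyingtree{S}$ and $\underlyingtree{T}$ along the leaf of $S$ at $[l]$ and the root of $T$, and the address formulas then fall out of the induction you describe on the walk-to-root function, with base case the root of $T$ receiving address $[l]$. Your disjointness argument and degenerate-case checks are sound. The one point you flag as a possible obstacle --- that the pushout in $\tree P$ restricts along $\underlyingtree{-}$ to the naive gluing of underlying trees and again lands in $\Tree$ --- is indeed the only part with real content, and it is precisely what the paper borrows silently from \cite{Kock2011}; citing it explicitly would close the gap.
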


\begin{notation}
    [Total grafting]
    \label{not:total-grafting}
    Let $T, U_1, \ldots, U_k \in \tr P$, write $T^\leafsymbol = \left\{ [l_1],
    \ldots, [l_k] \right\}$, and assume the grafting $T \graft_{[l_i]} U_i$ is
    defined for all $i$. Then the \emph{total grafting} will be denoted
    concisely by
    \begin{equation}
        \label{eq:big-grafting}
        T \biggraft_{[l_i]} U_i
        = ( \cdots
            (T \graft_{[l_1]} U_1) \graft_{[l_2]} U_2
        \cdots ) \graft_{[l_k]} U_k .
    \end{equation}
    It is easy to see that the result does not depend on the order in which the
    graftings are performed.
\end{notation}

\begin{proposition}
    [{\cite[proposition 1.1.21]{Kock2011}}]
    \label{prop:polynomial-functor:trees-are-graftings}
    Every $P$-tree is either of the form $\itree{i}$, for some $i \in I$, or
    obtained by iterated graftings of corollas (i.e. $P$-trees of the form
    $\ytree{b}$ for $b \in B$).
\end{proposition}
\begin{proof}
    This can easily be proved by induction on the number of nodes.
\end{proof}

\begin{remark}
    \label{rem:tree-contexts}
    As a consequence of \cite[proposition 1.1.3]{Kock2011}, a morphism $T
    \longrightarrow S$ of $P$-trees exhibits $T$ as a subtree of $S$ as in
    \[
        S \speq U \graft_{[p]} T \biggraft_{[l]} V_{[l]}
    \]
    where $U$ is spanned by all the edges of $S$ that are either descendant of
    the root edge of $T$, or incomparable to it \cite[paragraphs 1.0.7 and
    1.1.11]{Kock2011}, and where $[l]$ ranges over $T^\leafsymbol$. Conversely,
    any such decomposition of $S$ induces a morphism $T \longrightarrow S$.
\end{remark}

\subsection{Polynomial monads}
\label{sec:polynomial:monads}

\begin{definition}
    [Polynomial monad]
    \label{def:polynomial-monad}
    A \emph{polynomial monad over $I$}is a monoid in
    $\PolyEnd(I)$. Note that a polynomial monad over $I$ is thus necessarily a
    cartesian monad on $\Set/I$.\footnote{We recall that a monad is
    \emph{cartesian}if its endofunctor preserves
    pullbacks and its unit and multiplication are cartesian natural
    transformations.} Let $\PolyMnd(I)$be the category of
    monoids in $\PolyEnd(I)$. That is, $\PolyMnd(I)$ is the category of
    polynomial monads over $I$ and morphisms of polynomial functors over $I$
    that are also monad morphisms.
\end{definition}

\begin{definition}
    [$(-)^\star$ construction]
    \label{def:star-construction}
    Given a polynomial endofunctor $P$ as in \cref{eq:polynomial-functor},
    we define a new polynomial endofunctor $P^\star$ as
    \begin{equation}
        \label{eq:free-monad}
        \polynomialfunctor
            {I}{\treewithleaf P}{\tree P}{I}
            {s}{p}{t}
    \end{equation}
    where $s$ maps a $P$-tree with a marked leaf to the decoration of that
    leaf, $p$ forgets the marking, and $t$ maps a $P$-tree to the decoration of
    its root. Remark that for $T \in \tree P$ we have $p^{-1} (T) \cong
    T^\leafsymbol$. Clearly, there is an inclusion $P \longrightarrow P^\star$,
    mapping $b \in B$ to $\ytree{b} \in \tree P$, and $e \in E (b)$ to $[e] \in
    \ytree{b}^\leafsymbol$ (see \cref{rem:elementary-p-trees:addresses}).
\end{definition}

\begin{theorem}
    [{\cite[section 1.2.7]{Kock2011}, \cite[sections 2.7 to 2.9]{Kock2010}}]
    \label{th:polynomial-monads-star-algebras}
    The polynomial functor $P^\star$ has a canonical structure of a polynomial
    monad. Furthermore, the functor $(-)^\star$ is left adjoint to the
    forgetful functor $\PolyMnd (I) \longrightarrow \PolyEnd (I)$, and the
    adjunction is monadic.
\end{theorem}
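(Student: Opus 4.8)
The plan has three parts: equip $P^\star$ with a monad structure built from grafting, exhibit $(-)^\star$ as left adjoint to the forgetful functor $V \colon \PolyMnd(I) \to \PolyEnd(I)$ by means of a universal property of trees, and deduce monadicity from Beck's theorem. For the monad structure, the unit $\mathrm{id}_I \to P^\star$ (where $\mathrm{id}_I$ is the identity polynomial functor on $\Set/I$) sends a colour $i$ to the trivial tree $\itree{i}$, which works because $\itree{i}$ has exactly one leaf, decorated by $i$ (\cref{rem:elementary-p-trees:addresses}). For the multiplication, recall that an operation of the composite polynomial functor $P^\star \circ P^\star$ with target $i$ is the same datum as a $P$-tree $T$ with root decorated by $i$ together with, for each $[l] \in T^\leafsymbol$, a $P$-tree $U_{[l]}$ whose root matches the colour of the leaf $[l]$ --- precisely the input datum of a total grafting. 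We let $\mu \colon P^\star \circ P^\star \to P^\star$ send this to $T \biggraft_{[l]} U_{[l]}$; by \cref{lemma:grafting-addresses}, a leaf of $T \biggraft_{[l]} U_{[l]}$ is exactly the choice of a leaf $[l]$ of $T$ and a leaf of $U_{[l]}$, so the middle square of $\mu$ is cartesian and $\mu$ is a morphism of polynomial functors over $I$. The monad axioms reduce to properties of grafting: the unit laws because grafting a trivial tree on either side does nothing (\cref{def:grafting}), and associativity because iterated grafting is order-independent (\cref{not:total-grafting}) and a single grafting is associative, being a pasting of pushout squares.

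For the adjunction, the candidate unit $\eta_P \colon P \to V(P^\star)$ sends an operation $b$ to the corolla $\ytree{b}$; since $\ytree{b}^\leafsymbol = \{ [e] \mid e \in E(b) \}$ (\cref{rem:elementary-p-trees:addresses}), this is a morphism of polynomial functors over $I$. To check universality, let $M$ be a polynomial monad over $I$, with unit $\eta^M$ and multiplication $\mu^M$, and let $f \colon P \to VM$ be a morphism of polynomial functors over $I$. By \cref{prop:polynomial-trees-are-graftings} every $P$-tree is a trivial tree or an iterated grafting of corollas, so a monad morphism $\bar f \colon P^\star \to M$ with $\bar f \circ \eta_P = f$ is forced to satisfy $\bar f (\itree{i}) = \eta^M_i$, $\bar f (\ytree{b}) = f(b)$, and $\bar f(S \graft_{[l]} T) = \mu^M (\bar f S, \bar f T)$ (suitably interpreted); this gives uniqueness. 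For existence one takes this as the definition of $\bar f$ and checks it is well defined, independently of the chosen decomposition of a $P$-tree into grafted corollas --- this is exactly the associativity and unit laws of $M$. That $\bar f$ respects the cartesian middle squares follows by induction from the cartesianness of $\eta^M$ and $\mu^M$, and that $\bar f$ is a morphism of monads is immediate; the triangle identities are then routine, so $(-)^\star \dashv V$.

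For monadicity, we apply Beck's monadicity theorem. The functor $V$ is conservative, since the inverse of a monad morphism that happens to be invertible as a morphism of polynomial functors is automatically a monad morphism. And $V$ creates coequalizers of $V$-split pairs: given a parallel pair $M_1 \rightrightarrows M_2$ in $\PolyMnd(I)$ whose image under $V$ underlies a split coequalizer $VM_1 \rightrightarrows VM_2 \xrightarrow{q} C$ in $\PolyEnd(I)$, the coequalizer $q$ is absolute, hence preserved by the functors $C \circ (-)$ and $(-) \circ VM_2$; this forces a unique monad structure on $C$ making $q$ a monad morphism, and a routine check shows $C$ is then the coequalizer of $M_1 \rightrightarrows M_2$ in $\PolyMnd(I)$ and is preserved by $V$. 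Beck's theorem then yields that the comparison functor from $\PolyMnd(I)$ to the category of algebras for the monad $(-)^\star$ on $\PolyEnd(I)$ is an equivalence, i.e. the adjunction is monadic.

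The step I expect to require the most care is the well-definedness of $\bar f$: one must show that any two ways of assembling a given $P$-tree out of corollas by grafting produce the same operation of $M$. This is an induction which repeatedly replaces a grafting by an instance of $\mu^M$ and invokes the coherence of $M$ along with the order-independence of \cref{not:total-grafting}; everything else is bookkeeping with addresses or a direct appeal to standard categorical facts.
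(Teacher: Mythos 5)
The paper does not prove this theorem: it is stated and immediately delegated to \cite[section 1.2.7]{Kock2011} and \cite[sections 2.7 to 2.9]{Kock2010}, so there is no in-paper proof to compare against. Your reconstruction is the standard argument from those sources and is correct in outline: the unit as trivial trees, the multiplication as total grafting (with \cref{lemma:grafting-addresses} giving cartesianness), the adjunction unit $b \mapsto \ytree{b}$ with the universal property established by induction over the grafting decomposition of \cref{prop:polynomial-trees-are-graftings}, and monadicity by Beck's theorem via the general fact that the forgetful functor from monoids in a monoidal category creates coequalizers of split pairs (these being absolute and hence preserved by whiskering).

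Two small remarks on presentation rather than substance. First, once you have established the universal property of $\eta_P : P \to V(P^\star)$ for every $P$, the adjunction is already in hand; there are no separate ``triangle identities'' to verify, so that clause is redundant. Second, ``that $\bar f$ is a morphism of monads is immediate'' is an overstatement: you define $\bar f$ so as to respect single graftings, but compatibility with the full multiplication $\mu^{P^\star}$ (total grafting) needs the same induction as the well-definedness check you correctly flag as the crux; they should be presented as one and the same lemma. Neither point affects the correctness of the route you take, and it matches what the cited references do.
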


\begin{definition}
    [Target and readdressing map]
    \label{def:readdressing}
    We abuse notations and letting $(-)^\star$ denote the associated monad on
    $\PolyEnd (I)$. Let $M$ be a polynomial monad as in
    \[
        \polynomialfunctor{I}{E}{B}{I.}{s}{p}{t}
    \]
    By \cref{th:polynomial-monads-star-algebras}, $M$ is a $(-)^\star$-algebra,
    and we will write its structure map $M^\star \longrightarrow M$ as
    \begin{equation}
        \label{eq:polynomial-monad-structure-map}
        \begin{tikzcd}
            I
                \ar[d, equal] &
            \treewithleaf M
                \pullbackcorner
                \ar[l] \ar[d, "\readdress"] \ar[r] &
            \tree M
                \ar[r] \ar[d, "\tgt"] &
            I
                \ar[d, equal]
            \\
            I &
            E
                \ar[l] \ar[r] &
            B \ar[r] &
            I.
        \end{tikzcd}
    \end{equation}
    For $T \in \tree M$, we call $\readdress_T : T^\leafsymbol \xto{\cong} E
    (\tgt T)$ the \emph{readdressing} function of $T$, and
    $\tgt T \in B$ is called the \emph{target} of $T$. If we think of an
    element $b \in B$ as the corolla $\ytree{b}$, then the target map $\tgt$
    ``contracts'' a tree to a corolla, and since the middle square is a
    pullback, the number of leaves is preserved. The map $\readdress_T$
    establishes a decoration-preserving correspondence between the set
    $T^\leafsymbol$ of leaf addresses of a tree $T$ and the elements of $E
    (\tgt T)$.
\end{definition}

\begin{definition}
    [Baez--Dolan $(-)^+$ construction]
    \label{def:baez-dolan-construction}
    Let $M$ be a polynomial monad as in \cref{eq:polynomial-functor}, and
    define its \emph{Baez--Dolan construction}
    $M^+$ to be
    \begin{equation}
        \label{eq:polynomial-functor:+}
        \polynomialfunctor{B}{\treewithnode M}{\tree M}{B}{\src}{p}{\tgt}
    \end{equation}
    where $\src$ maps an $M$-tree with a marked node to the label of that node,
    $p$ forgets the marking, and $\tgt$ is the target map of
    \cref{def:readdressing}. If $T \in \tree M$, remark that $p^{-1} T
    = T^\nodesymbol$ is the set of node addresses of $T$. If $[p] \in
    T^\nodesymbol$, then $\src [p] \eqdef \src_{[p]} T$.
\end{definition}

\begin{theorem}
	[{\cite[section 3.2]{Kock2010}}]
    \label{th:polynomial-functor:+:is-monad}
    If $M$ a polynomial monad, then $M^+$ has a canonical structure of a
    polynomial monad.
\end{theorem}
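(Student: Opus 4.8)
The plan is to construct the monad structure on $M^+$ directly, by specifying a unit $I \to M^+$ and a multiplication $M^+ \graft M^+ \to M^+$ (equivalently, by exhibiting $M^+$ as an algebra for the $(-)^\star$ monad via \cref{th:polynomial-monads-star-algebras}, i.e.\ a map $(M^+)^\star \to M^+$ satisfying the unit and associativity laws). I would take the second route, since it is the one that meshes with the combinatorics already set up: an $M^+$-tree is a tree whose nodes are decorated by $M$-trees and whose edges are decorated by operations of $M$, with the decorations compatible along source/target; the structure map must contract such a tree of $M$-trees to a single $M$-tree. This contraction is exactly \emph{substitution of $M$-trees}: grafting is the special case of substituting corollas, and since every $M$-tree is an iterated grafting of corollas (\cref{prop:polynomial-trees-are-graftings}), substitution is determined by its effect on corollas together with functoriality. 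So the first step is to define, for a nullary case, the unit map $I \to M^+$ over $B$ — an edge $b \in B$ goes to the corolla $\ytree{b} \in \tree M$, which has target $b$ (by the normalization in \cref{def:readdressing}, since $\readdress_{\ytree{b}}$ identifies $\ytree{b}^\leafsymbol = E(b)$) and source decorations also given by the inputs of $b$; this uses that $M$ is a monad, not merely a polynomial functor, because one needs $\tgt \ytree{b} = b$.

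The second step is the multiplication. Given an $M^+$-tree $\mathcal{T}$ — concretely a map $\underlyingtree{\mathcal{T}} \to M^+$, so each node $[p] \in \mathcal{T}^\nodesymbol$ is labelled by an $M$-tree $\mathcal{T}_1([p]) \in \tree M$, and the edge structure records that the target of the $M$-tree at $[p]$ equals an input-colour (an element of $E$) of the $M$-tree sitting just below — I would produce a single $M$-tree $\mu(\mathcal{T})$ by substitution: start from the $M$-tree decorating the root node of $\mathcal{T}$, and recursively replace each node of that $M$-tree by the $M$-tree decorating the corresponding node of $\mathcal{T}$ above it, gluing leaf addresses via the readdressing functions $\readdress$. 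Concretely this is a nested application of the substitution operation on $M$-trees, whose existence and coherence follow from $M^\star$ being a monad: the associativity of $\mu$ on $M^+$ is \emph{inherited from} the associativity of the substitution/monad structure on $M^\star$, and likewise the unit laws for $M^+$ reduce to the unit laws for $M$ together with the corolla normalization above. The source and target maps of $M^+$ are compatible with this $\mu$ essentially by construction: $\tgt$ of a tree of $M$-trees is $\tgt$ of the bottom $M$-tree (which is preserved under substituting things into its nodes, since substitution keeps the root corolla's shape — this is the same "number of leaves preserved" remark from \cref{def:readdressing} applied one level up), and the leaf/node bookkeeping is governed by \cref{lemma:grafting-addresses} and its substitution analogue.

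Finally, I would verify that the middle square of the polynomial functor $M^+$ — namely $\treewithnode M \to \tree M$ with the marking forgotten — is cartesian over the relevant maps, so that $\mu$ and the unit are genuine morphisms of polynomial functors over $B$ (not just maps of the underlying diagrams); this is where one checks that $p^{-1}(T) = T^\nodesymbol$ interacts correctly with substitution, i.e.\ that the nodes of a substituted $M$-tree are exactly the disjoint union, indexed by nodes $[p]$ of the outer tree, of the nodes of the $M$-tree plugged in at $[p]$ (readdressed by prefixing), which is again the substitution version of \cref{lemma:grafting-addresses}. Assembling these checks gives the monad axioms for $M^+$.

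The main obstacle I expect is purely bookkeeping: making the recursive "substitution of $M$-trees" precise and associative at the level of addresses. The clean way to sidestep a messy induction is to observe that the substitution operation on $\tree M$ is \emph{already packaged} in the monad $M^\star$ of \cref{th:polynomial-monads-star-algebras} — indeed $M^+$'s multiplication is, up to the identifications $B \leftrightarrow \{\ytree{b}\}$ and "$M$-tree of $M$-trees" $\leftrightarrow$ "$(M^\star)$-decorated structure", nothing but the monad multiplication of $M^\star$ read through the target map $\tgt$. Phrasing the argument this way lets one cite the coherence of $M^\star$ wholesale rather than reproving associativity by hand; the only genuinely new content is the target-compatibility ($\tgt$ of a substituted tree), which is the normalization fact that substituting into the nodes of an $M$-tree $U$ does not change $\tgt U$, proved by induction on $U$ using \cref{prop:polynomial-trees-are-graftings} and the definition of $\readdress$.
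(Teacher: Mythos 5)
The paper itself gives no proof of this theorem --- it is cited from \cite[section 3.2]{Kock2010} --- so there is no in-paper argument to compare your sketch against. That said, your outline is a reasonable reconstruction of the standard construction: via \cref{th:polynomial-monads-star-algebras}, equipping $M^+ \in \PolyEnd(B)$ with a polynomial monad structure amounts to giving a $(-)^\star$-algebra map $(M^+)^\star \longrightarrow M^+$ over $B$; the unit sends $b \in B$ to the corolla $\ytree{b}$, and the multiplication flattens an $M^+$-tree (a tree of $M$-trees glued node-to-root) into a single $M$-tree by substitution. Two small slips in the unit step: $b$ is a \emph{colour} of $M^+$, not an edge, and the identity $\tgt \ytree{b} = b$ is the \emph{unit} axiom of the $(-)^\star$-algebra $M$, not something read off from $\readdress_{\ytree{b}}$.

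The place you overreach is the claim that the coherence of substitution ``is already packaged'' in $M^\star$ and can be cited wholesale. Substitution of $M$-trees into \emph{nodes} is not the monad multiplication of $M^\star$, which is grafting of trees into \emph{leaves}. The two are related --- to substitute $V$ into a node of $U$ one decomposes $U$ around that node as in \cref{rem:tree-contexts}, grafts in $V$ where the corolla was, and reattaches the subtrees above along $\readdress_V \colon V^\leafsymbol \cong E(\tgt V)$ --- but deriving associativity of substitution from associativity of grafting genuinely requires an address-chasing induction using the naturality of $\readdress$ and the compatibility of $\tgt$ with grafting (both encoded in the pullback square of \cref{def:readdressing}). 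Your parenthetical that ``substitution keeps the root corolla's shape'' is also not literally true (substituting into the root node replaces that corolla); what is true, and what you need, is that $\tgt$ of the flattened tree equals the root colour of the $M^+$-tree, and this too is an application of the $M^\star$-algebra axioms rather than an observation. So the bookkeeping you flag as the main obstacle really is the content of the proof and cannot be entirely sidestepped; but your list of ingredients --- unit by corollas, multiplication by substitution, cartesianness via a substitution analogue of \cref{lemma:grafting-addresses}, coherence traced back to $\readdress$ and $\tgt$ --- is the right one, and matches the route taken in the cited reference.
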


\begin{remark}
    [Nested addresses]
    \label{rem:nested-addresses}
    Let $M$ be a polynomial monad, and $T \in \tree M^+$. Then the nodes of
    $T$ are decorated in $M$-trees, and its edges by operations of $M$.
    Assume that $U \in \tree M$ decorates some node of $T$, say $U
    = \src_{[p]} T$ for some node address $[p] \in T^\nodesymbol$.
    \begin{enumerate}
        \item The input edges of that node are in bijection with
        $U^\nodesymbol$. In particular, the address of those input edges
        are of the form $[p[q]]$, where $[q]$ ranges over $U^\nodesymbol$.
        This really motivates enclosing addresses in brackets.
        \item On the other hand, the output edge of that node is decorated by
        $\tgt U$ (where $\tgt$ is defined in \cref{def:readdressing}).
    \end{enumerate}
\end{remark}

\begin{notation}
    \label{not:edg}
    Let $M$ be a polynomial monad, and $T \in \tree M^+$. For $[a]$ the address
    of an edge of $T$, let $\edg_{[a]} T$be the operation
    of $M$ decorating that edge. Explicitely, if $[a] = []$, then $\edg_{[]} T
    \eqdef \tgt \src_{[]} T$. Otherwise, $[a] = [p[q]]$ for some $[p] \in
    T^\nodesymbol$ (the node below the edge) and $[q] \in (\src_{[p]}
    T)^\nodesymbol$, and let $\edg_{[p[q]]} T \eqdef \src_{[q]} \src_{[p]} T$.
\end{notation}

\section{Opetopes} \label{sec:opetopes}

\subsection{Definition}
\label{sec:opetopes:definition}

\begin{definition}
    [The $\optPolyFun^n$ monad]
    \label{def:zn}
    Let $\optPolyFun^0$ be the identity polynomial monad on $\Set$, as depicted
    on the left below, and let $\optPolyFun^n \eqdef
    (\optPolyFun^{n-1})^+$. Write $\optPolyFun^n$ as on
    right:
    \begin{equation}
        \label{eq:zn}
        \polynomialfunctor
            {\{* \}}{\{* \}}{\{* \}}{\{* \},}
            {}{}{}
        \qqquad
        \polynomialfunctor
            {\bbOO_n}{E_{n+1}}{\bbOO_{n+1}}{\bbOO_n .}
            {\src}{p}{\tgt}
    \end{equation}
\end{definition}

\begin{definition}
    [Opetope]
    \label{def:opetope}
    An \emph{$n$-dimensional opetope} (or \emph{$n$-opetope} for short) $\omega$
    is simply an element of $\bbOO_n$, and we write $\dim \omega = n$. If $n
    \geq 2$, then $n$-opetopes are exactly the $\optPolyFun^{n-2}$-trees. In
    this case, an opetope $\omega \in \bbOO_n$ is called \emph{degenerate}
    if its underlying tree has no nodes (and thus
    consists of a unique edge), so that $\omega = \itree{\phi}$ for some $\phi
    \in \bbOO_{n-2}$. We say that $\omega$ it is an \emph{endotope}
    it its underlying tree has exactly one node, i.e. $\omega
    = \ytree{\psi}$ for some $\psi \in \bbOO_{n-1}$.

    Following \cref{eq:polynomial-monad-structure-map}, for $n \geq 2$ and
    $\omega \in \bbOO_n$, the structure of polynomial monad
    $(\optPolyFun^{n-2})^\star \longrightarrow \optPolyFun^{n-2}$ gives a
    bijection $\readdress_\omega : \omega^\leafsymbol \longrightarrow (\tgt
    \omega)^\nodesymbol$ between the leaves of $\omega$ and the nodes of $\tgt
    \omega$, preserving the decoration by $(n-2)$-opetopes.
\end{definition}

\begin{example}
    \label{ex:opetopes}
    \begin{enumerate}
        \item The unique $0$-opetope is denoted $\optZero$ and called the
        \emph{point}.

        \item The unique $1$-opetope is denoted $\optOne$ and called the
        \emph{arrow}.

        \item If $n \geq 2$, then $\omega \in \bbOO_n$ is a
        $\optPolyFun^{n-2}$-tree, i.e. a tree whose nodes are labeled in
        $(n-1)$-opetopes, and edges are labeled in $(n-2)$-opetopes. In
        particular, $2$-opetopes are $\optPolyFun^0$-trees, i.e. linear trees,
        and thus in bijection with $\bbNN$. We will refer to them as
        \emph{opetopic integers}, and write
        $\optInt{n}$ for the
        $2$-opetope having exactly $n$ nodes.

        \item A $3$-opetope is a $\optPolyFun^1$-tree, i.e. a planar tree.

        \item A $4$-opetope is a $\optPolyFun^2$-tree. Unfolding definitions,
        if $\omega : \underlyingtree{\omega} \longrightarrow \optPolyFun^2$,
        then nodes of $\omega$ are decorated by elements of $\bbOO_3$, i.e.
        planar trees. Further, if $x \in \underlyingtree{\omega}_1$ is a node
        of $\omega$, then $\omega_2$ exhibits a bijection between the input
        edges of $x$ and the nodes of $\omega_1 (x) \in \bbOO_3$.
    \end{enumerate}
\end{example}

\subsection{The category of opetopes}

Akin to the work of Cheng \cite{Cheng2003}, we define a category of opetopes by
means of generators and relations. The difference with the aforementioned
reference is our use of polynomial opetopes (also equivalent to Leinster's
definition \cite{Leinster2004, Kock2010}), while Cheng uses an approach by
multicategorical slicing, yielding ``symmetric'' opetopes.

\begin{lemma}
    [Opetopic identities]
    \label{lemma:opetopic-identities}
    Let $\omega \in \bbOO_n$ with $n \geq 2$.
    \begin{enumerate}
        \item (Inner edge) For an inner edge $[p[q]] \in \omega^\nodesymbol$
        (the fact that $\omega$ has an inner edge implies that it is non
        degenerate), we have $\tgt \src_{[p[q]]} \omega = \src_{[q]} \src_{[p]}
        \omega$.
        \item (Globularity 1) If $\omega$ is non degenerate, we have $\tgt
        \src_{[]} \omega = \tgt \tgt \omega$.
        \item (Globularity 2) If $\omega$ is non degenerate, and $[p[q]] \in
        \omega^\leafsymbol$, we have $\src_{[q]} \src_{[p]} \omega =
        \src_{\readdress_\omega [p[q]]} \tgt \omega$.
        \item (Degeneracy) If $\omega$ is degenerate, we have $\src_{[]} \tgt
        \omega = \tgt \tgt \omega$.
    \end{enumerate}
\end{lemma}
\begin{proof}
    \begin{enumerate}
        \item (Inner edge) By definition of a $\optPolyFun^{n-2}$-tree.

        \item (Globularity 1 and 2) By
        \cref{th:polynomial-monads-star-algebras}, the monad structure on
        $\optPolyFun^{n-2}$ amounts to a structure map
        $(\optPolyFun^{n-2})^\star \longrightarrow \optPolyFun^{n-2}$, which,
        taking the notations of \cref{def:readdressing}, is written as
        \[
            \begin{tikzcd}
                \bbOO_{n-2}
                    \ar[d, equal] &
                \treewithleaf \optPolyFun^{n-2}
                    \pullbackcorner
                    \ar[r, "p"]
                    \ar[d, "\readdress" left]
                    \ar[l, "\edg" above] &
                \tree \optPolyFun^{n-2}
                    \ar[r, "\edg_{[]}"]
                    \ar[d, "\tgt" left] &
                \bbOO_{n-2}
                    \ar[d, equal] \\
                \bbOO_{n-2} &
                \bbOO^\nodesymbol_{n-1}
                    \ar[r, "p"]
                    \ar[l, "\src" above] &
                \bbOO_{n-1}
                    \ar[r, "\tgt"] &
                \bbOO_{n-2} .
            \end{tikzcd}
        \]
        The claims follow from the commutativity of the right and left square
        respectively.

        \item (Degeneracy) Let $\omega = \itree{\phi}$, for $\phi \in
        \bbOO_{n-2}$. Then, $\tgt \tgt \omega = \tgt \ytree{\phi} = \phi$, and
        clearly, $\phi = \src_{[]} \ytree{\phi} = \src_{[]} \tgt \omega$.
        \qedhere
    \end{enumerate}
\end{proof}

\begin{definition}
    [The category $\bbOO$ of opetopes]
    \label{def:o}
    With the identities of \cref{lemma:opetopic-identities}, we define the
    category $\bbOO$ of opetopes by generators and relations as follows.
    \begin{enumerate}
        \item (Object) We set $\ob \bbOO = \sum_{n \in \bbNN} \bbOO_n$.
        \item (Generating morphism) Let $\omega \in \bbOO_n$ with $n \geq 1$.
        We introduce a generator $\tgt : \tgt \omega \longrightarrow
        \omega$, called the \emph{target
        embedding}. If $[p] \in \omega^\nodesymbol$,
        then we introduce a generator $\src_{[p]} : \src_{[p]} \omega
        \longrightarrow \omega$,
        called a \emph{source embedding}. An
        \emph{elementary face embedding}is
        either a source or the target embedding. \footnote{Note if $n \geq 2$
        and $[p]$ is an edge address of $\omega$, then the edge operation of
        $\edg_{[p]}$ \cref{not:edg} also translates to a morphism $\edg_{[p]} :
        \edg_{[p]} \omega \longrightarrow \omega$.}
        \item (Relation) We impose 4 relations described by the following
        commutative squares, which just enforce the identities of
        \cref{lemma:opetopic-identities}. Let $\omega \in \bbOO_n$ with $n \geq
        2$.
        \begin{itemize}
            \item \condition{Inner} For
            $[p[q]] \in \omega^\nodesymbol$ (forcing $\omega$ to be non
            degenerate), the following square must commute:
            \[
                \squarediagram
                    {\src_{[q]} \src_{[p]} \omega}{\src_{[p]} \omega}
                        {\src_{[p[q]]}\omega}{\omega}
                    {\src_{[q]}}{\tgt}{\src_{[p]}}{\src_{[p[q]]}}
            \]
            \item \condition{Glob1} If
            $\omega$ is non degenerate, the following square must commute:
            \[
                \squarediagram
                    {\tgt \tgt \omega}{\tgt \omega}{\src_{[]} \omega}{\omega .}
                    {\tgt}{\tgt}{\tgt}{\src_{[]}}
            \]
            \item \condition{Glob2} If
            $\omega$ is non degenerate, and for $[p [q]] \in
            \omega^\leafsymbol$, the following square must commute:
            \[
                \diagramsize{2}{4}
                \squarediagram
                    {\src_{\readdress_\omega [p [q]]} \tgt \omega}{\tgt \omega}
                        {\src_{[p]} \omega}{\omega .}
                    {\src_{\readdress_\omega [p[q]]}}{\src_{[q]}}{\tgt}
                        {\src_{[p]}}
            \]
            \item \condition{Degen} If
            $\omega$ is degenerate, the following square must commute:
            \[
                \squarediagram
                    {\tgt \tgt \omega}{\tgt \omega}{\tgt \omega}{\omega .}
                    {\tgt}{\src_{[]}}{\tgt}{\tgt}
            \]
        \end{itemize}
    \end{enumerate}
\end{definition}

\begin{remark}
    \label{rem:o}
    Let us explain this definition a little more. Opetopes are trees whose
    nodes (and edges) are decorated by opetopes. The decoration is now
    interpreted as a geometrical feature, namely as an embedding of a lower
    dimensional opetope. Further, the target of an opetope, while not an
    intrinsic data, is also represented as an embedding. The relations can be
    understood as follows.
    \begin{itemize}
        \item \condition{Inner} The inner edge at $[p[q) \in
        \omega^\nodesymbol$ is decorated by the target of the decoration of the
        node ``above'' it (here $\src_{[p[q]]} \omega$), and in the
        $[q]$-source of the node ``below'' it (here $\src_{[p]} \omega$). By
        construction, those two decorations match, and this relation makes the
        two corresponding embeddings $\src_{[q]} \src_{[p]} \omega
        \longrightarrow \omega$ match as well. On the left is an informal
        diagram about $\omega$ as a tree (reversed gray triangle), and on the
        right is an example of pasting diagram represented by an opetope, with
        the relevant features of the \condition{Inner} relation colored or
        thickened.
        \[
            \tikzinput[.9]{optids-informal}{inner}
            \qqquad
            \tikzinput[.9]{optids-informal}{inner.ps}
        \]
        \item \condition{Glob1-2} If we consider the underlying tree of
        $\omega$ as its ``geometrical source'', and the corolla $\ytree{\tgt
        \omega}$ as its ``geometrical target'', then they should be parallel.
        The relation \condition{Glob1} expresses this idea by ``gluing'' the
        root edges of $\omega$ and $\ytree{\tgt \omega}$ together, while
        \condition{Glob2} glues the leaves according to $\readdress_\omega$.
        \[
            \tikzinput[.9]{optids-informal}{glob1}
            \qqquad
            \tikzinput[.9]{optids-informal}{glob1.ps}
        \]
        \[
            \tikzinput[.9]{optids-informal}{glob2}
            \qqquad
            \tikzinput[.9]{optids-informal}{glob2.ps}
        \]
        \item \condition{Degen} If $\omega$ is a degenerate opetope, depicted
        as on the right, then its target should be a ``loop'', i.e. its only
        source and its target should be glued together.
        \[
            \tikzinput{optids-informal}{degen}
            \qqquad\qqquad\qqquad
            \tikzinput{optids-informal}{degen.ps}
        \]
    \end{itemize}
\end{remark}

\begin{notation}
    \label{not:subcategories-of-o}
    For $n \in \bbNN$, let $\bbOO_{\leq n}$ be the full subcategory of $\bbOO$
    spanned by opetopes of dimension at most $n$.
\end{notation}

\section{Opetopic sets and many-to-one polygraphs}
\label{sec:polygraphs}

Polygraphs where originally introduced by Street \cite[section 2]{Street1976}
under the name of \emph{computad}. They are to (strict) $\omega$-categories
what graphs are to $1$-categories: a combinatorial device that freely generates
them. However, unlike graphs, the category $\Pol$ of polygraphs fails to be a
presheaf category \cite{Carboni2004} \cite{Makkai2008} \cite{Cheng2013}. The
obstruction for it to be the case is an unpleasant corollary of the
\emph{exchange law}: if $f$ and $g$ are endomorphisms of an identity cell, then
$fg = gf$.
\begin{align*}
    \tikzinput[.8]{polygraph}{1}
    &\quad\longleadsto\quad \tikzinput[.8]{polygraph}{2}
        \quad\longleadsto\quad \tikzinput[.8]{polygraph}{3} \\
    &\quad\longleadsto\quad \tikzinput[.8]{polygraph}{4}
        \quad\longleadsto\quad \tikzinput[.8]{polygraph}{5}
\end{align*}
Nonetheless, the recent work of Henry \cite{Henry2019} characterized many
subcategories of $\Pol$ to be presheaf categories. Among them, the category
$\PolMTO$ of \emph{many-to-one} polygraphs, in which the target (or codomain)
of generating cells are themselves generating cells. In this section, we relate
opetopes and many-to-one polygraphs in a formal way. Namely, we construct an
equivalence of categories $\polyreal{-} : \PshO \longrightarrow \PolMTO$,
called \emph{polygraphic realization}.

Recall from \cite[theorem 1]{Borceux1986} that if $\catAA$ and $\catBB$ are two
Cauchy-complete categories such that $\Psh\catAA \simeq \Psh\catBB$, then
$\catAA \simeq \catBB$ (see \cref{th:cauchy-complete} for more details). In
particular, any Cauchy-complete category $\catAA$ that acts as a ``shape theory
for many-to-one polygraphs'', i.e. such that $\Psh\catAA \simeq \PolMTO$, is
equivalent to $\bbOO$ (since it doesn't have any non-identity endomorphism, it
is Cauchy-complete). This shows that the geometrical intuition behind the
definition of $\bbOO$ (\cref{rem:o}) is essentially the unique way to
faithfully implement the combinatorics of pasting diagrams.

The fact that many-to-one polygraphs are equivalent to opetopic sets was
already known from \cite{Hermida2000} \cite{Harnik2002} \cite{Cheng2004}
\cite{Harnik2008}, however the proof there is indirect and spanned over
multiple articles. The formalism we developed so far allows us to establish
this result directly.

\subsection{Strict higher categories}

\begin{definition}
    [$\omega$-category]
    \label{def:omega-category}
    An \emph{$\omega$-category} $\catCC$ (also called
    \emph{strict $\infty$-category}) is the
    datum of a diagram of sets
    \[
        \begin{tikzcd}
            \catCC_0
                \ar[r, bend right, "\id" below]
            &
            \catCC_1
                \ar[l, "{\src, \tgt}" above]
                \ar[r, bend right, "\id" below]
            &
            \cdots
                \ar[l, "{\src, \tgt}" above]
                \ar[r, bend right, "\id" below]
            &
            \catCC_n
                \ar[l, "{\src, \tgt}" above]
                \ar[r, bend right, "\id" below]
            &
            \cdots
                \ar[l, "{\src, \tgt}" above]
        \end{tikzcd}
    \]
    with \emph{composition maps} $\graft_k : \catCC_{n, k} \longrightarrow
    \catCC_n$, where $k < n$ and $\catCC_{n, k}$ is the pullback
    \[
        \pullbackdiagram
            {\catCC_{n, k}}{\catCC_n}{\catCC_n}{\catCC_k,}
            {}{}{\tgt^{n-k}}{\src^{n-k}}
    \]
    such that the following conditions hold:
    \begin{enumerate}
        \item for all $k < n$, the diagram
        \[
            \begin{tikzcd} [column sep = 5em]
                \catCC_k
                    \ar[r, bend right, "\id^{n-k}" below]
                &
                \catCC_n
                    \ar[l, "{\src^{n-k}, \tgt^{n-k}}" above]
            \end{tikzcd}
        \]
        with the composition map $\graft_k : \catCC_{n, k} \longrightarrow
        \catCC_n$ is a $1$-category;
        \item for all $l < k < n$, the diagram
        \[
            \begin{tikzcd} [column sep = 5em]
                \catCC_l
                    \ar[r, bend right, "\id^{k-l}" below]
                &
                \catCC_k
                    \ar[l, "{\src^{k-l}, \tgt^{k-l}}" above]
                    \ar[r, bend right, "\id^{n-k}" below]
                &
                \catCC_n
                    \ar[l, "{\src^{n-k}, \tgt^{n-k}}" above]
            \end{tikzcd}
        \]
        with the composition maps $\graft_l : \catCC_{k, l} \longrightarrow
        \catCC_k$, $\graft_l : \catCC_{n, l} \longrightarrow \catCC_n$ and
        $\graft_k : \catCC_{n, k} \longrightarrow \catCC_n$ is a strict
        $2$-category.
    \end{enumerate}

    The maps $\src$ and $\tgt$ are called \emph{source} and \emph{target} maps,
    respectively, and if $x \in \catCC_k$, then $\id_x \eqdef \id (x)$ is the
    \emph{identity cell of $x$}. Note that by definition, the following
    equalities hold:
    \[
        \src \src x \speq \src \tgt x,
        \qqquad
        \tgt \src x \speq \tgt \tgt x,
        \qqquad
        \src \id_x \speq x \speq \tgt \id_x.
    \]
    The first two are called the \emph{globular identities}. Still by definition, for $0 \leq l < k < n$ and $w, x, y, z
    \in \catCC_n$ the following \emph{exchange law} holds:
    \[
        (w \graft_k x) \graft_l (y \graft_k z)
        \speq (w \graft_l y) \graft_k (x \graft_l z) ,
    \]
    assuming both sides are well-defined. Note that a strict $n$-category
    $\catCC$ may be viewed as an $\omega$-category where $\catCC_m$ only has
    identities for all $m > n$. Given an $\omega$-category $\catCC$, we write
    $\catCC_{\leq n}$ for the underlying strict
    $n$-category
    \[
        \begin{tikzcd}
            \catCC_0
                \ar[r, bend right, "\id" below]
            &
            \catCC_1
                \ar[l, "{\src, \tgt}" above]
                \ar[r, bend right, "\id" below]
            &
            \cdots
                \ar[l, "{\src, \tgt}" above]
                \ar[r, bend right, "\id" below]
            &
            \catCC_n .
                \ar[l, "{\src, \tgt}" above]
        \end{tikzcd}
    \]

    An \emph{$\omega$-functor} $f : \catBB
    \longrightarrow \catCC$ between two $\omega$-categories is just a sequence
    of maps $f_n : \catBB_n \longrightarrow \catCC_n$ that induces a
    $n$-functor $f_{\leq n} : \catBB_{\leq n} \longrightarrow \catCC_{\leq n}$
    for all $n \in \bbNN$. If the context is clear, we simply write $f$ for
    $f_n : \catBB_n \longrightarrow \catCC_n$.
\end{definition}

\begin{definition}
    [{Parallel cells \cite{Metayer2003}}]
    \label{def:parallel-cells-hcat}
    Let $\catDD$ be a strict $\omega$-category, $n \in \bbNN$. Two $n$-cells
    $x, y \in \catDD_n$ are \emph{parallel}, denoted by $x
    \parallel y$, if $\src x = \src y$
    and $\tgt x = \tgt y$. By convention, $0$-cells are pairwise parallel.
\end{definition}

\begin{definition}
    [Cellular extension] Let $\catDD$ be a strict $(n-1)$-category. A
    \emph{cellular extension} of $\catDD$ consists
    in a set $X$ and two maps $\src, \tgt : X \longrightarrow \catDD_{n-1}$
    such that the \emph{globular identities} hold,
    i.e. for all $x \in X$, we have $\src x \parallel \tgt x$. We also denote
    such a cellular extension by
    \[
        \catDD \xot{\src, \tgt} X .
    \]
\end{definition}

\begin{definition}
    [Free $n$-category] Let $\catDD$ be a strict $(n-1)$-category and $\catDD
    \xot{\src, \tgt} X$ be a cellular extension of $\catDD$. The \emph{free
    strict $n$-category} generated by this cellular extension is the strict
    $n$-category $\catDD [X]$ such that
    \begin{enumerate}
        \item as strict $(n-1)$-categories, $\catDD = \catDD [X]_{\leq n-1}$;
        \item there is an inclusion $X \longhookrightarrow \catDD [X]_n$, and
        the following diagrams commute:
        \[
            \diagramarrows{<-}{c->}{<-}{}
            \triangleDRdiagram
                {X}{\catDD_{n-1}}{\catDD [X]_n,}
                {\src}{}{\src}
            \qqquad
            \triangleDRdiagram
                {X}{\catDD_{n-1}}{\catDD [X]_n;}
                {\tgt}{}{\tgt}
            \diagramarrows{}{}{}{}
        \]
        \item if $\catEE$ is a strict $n$-category, $f : \catDD \longrightarrow
        \catEE_{\leq n-1}$ is an $(n-1)$-functor, and $f_n : X \longrightarrow
        \catEE_n$ is a map such that for all $x \in X$, $f (\src x) = \src f_n
        (x)$ and $f (\tgt x) = \tgt f_n (x)$, then $f$ and $f_n$ extend
        uniquely to an $n$-functor $\catDD [X] \longrightarrow \catEE$.
    \end{enumerate}
    The free extension $\catDD [X]$ always exists and is unique up to
    isomorphism, see \cite[section 1]{Harnik2008}.
\end{definition}

For the rest of this section, let $\catDD$ be a strict $(n-1)$-category,
$\catDD \xot{\src, \tgt} X$ be a cellular extension of $\catDD$, and $\catEE
\eqdef \catDD [X]$ be the $n$-category freely generated by the cellular
extension.

\begin{definition}
    [Counting function]
    \label{def:counting-function}
    Define an $n$-category $\bbNN^{(n)}$ by
    \[
        \begin{tikzcd}
            \{ 0 \}
                \ar[r, bend right, "0" below]
            &
            \{ 0 \}
                \ar[l, "{\src, \tgt}" above]
                \ar[r, bend right, "0" below]
            &
            \phantom{\{} \cdots \phantom{\}}
                \ar[l, "{\src, \tgt}" above]
                \ar[r, bend right, "0" below]
            &
            \{ 0 \}
                \ar[l, "{\src, \tgt}" above]
                \ar[r, bend right, "0" below]
            &
            \bbNN,
                \ar[l, "{\src, \tgt}" above]
        \end{tikzcd}
    \]
    where all compositions correspond to the addition of integers. For $x \in
    X$, define a \emph{counting function} $\hash_x : X \longrightarrow \bbNN$
    that maps $x$ to $1$, and all other elements of $X$ to $0$. This extends to
    a $n$-functor $\catEE \longrightarrow \bbNN^{(n)}$. Similarly, let $\hash :
    X \longrightarrow \bbNN$ be the map sending all elements to $1$, and extend
    it as $\hash : \catEE \longrightarrow \bbNN^{(n)}$.
\end{definition}

\begin{definition}
    [{Context \cite[definition 2.1.1]{Guiraud2009}}]
    \label{def:polygraph-context}
    Consider another cellular extension
    \[
        \catDD \xot{\src, \tgt} (X + \left\{ \ctxplace \right\})
    \]
    of $\catDD$, where $\src \ctxplace$ and $\tgt \ctxplace$ are chosen
    arbitrarily. A \emph{$n$-context} of $\catEE$ is a cell $C
    \in \catDD [X + \left\{\ctxplace \right\}]_n$ such that $\hash_\ctxplace C
    = 1$. One may think of $C$ as a cell of $\catEE_n$ with a ``hole'', and we
    sometime write $C = C[\ctxplace]$. If $u \in \catEE_n$ is parallel to
    $\ctxplace$ in $\catDD [X + \left\{ \ctxplace \right\}]$, let $C [u]$, be
    $C [\ctxplace]$ where $\ctxplace$ has been replaced by $u$.
\end{definition}

\begin{definition}
    [{Category of contexts \cite[definition 2.1.2]{Guiraud2009}}]
    \label{def:category-of-contexts}
    The category $\Ctx_n \catEE$ of
    \emph{$n$-contexts} of $\catEE$ has objects the $n$-cells of $\catEE$, and
    a morphism $C : x \longrightarrow y$ is an $n$-context $C = C [\ctxplace]$
    such that $C [x] = y$. If $D : y \longrightarrow z$ is another context,
    then the composite of $C$ and $D$ is $DC \eqdef D [C [\ctxplace]] : x
    \longrightarrow z$, as indeed, $D [C [x]] = D [y] = z$.
\end{definition}

\begin{definition}
    [Primitive context]
    \label{def:primitive-context}
    A context is \emph{primitive} over a cell $y \in
    \catEE_n$ if it is of the form $C : x \longrightarrow y$ with $x \in X$.
\end{definition}

\subsection{Many-to-one polygraphs}

\begin{definition}
    [{Polygraph \cite[definition 7.1]{Harnik2008}}]
    \label{def:polygraph}
    A \emph{polygraph} (also called a
    \emph{computad}) $\catPP$ consists of a
    small $\omega$-category $\catCC$ and sets $\catPP_n \subseteq \catCC_n$ for
    all $n \in \bbNN$, such that $\catPP_0 = \catCC_0$, and such that
    $\catCC_{\leq n+1} = \catCC_{\leq n} [\catPP_{n+1}]$, i.e. the underlying
    $(n+1)$-category of $\catCC$ is freely generated by $\catPP_{n+1}$ over its
    underlying $n$-category. We usually write $\catPP^*$ instead of $\catCC$. A
    polygraph $\catPP$ is an \emph{$n$-polygraph}
    if $\catPP_k =
    \emptyset$ whenever $k > n$. A \emph{morphism of polygraphs}
    is an $\omega$-functor mapping generators to
    generators. Let $\Pol$ be the category
    of polygraphs and morphisms between them.
\end{definition}

\begin{example}
    A $1$-polygraph $\catPP$ is simply a free $1$-category generated by the
    graph $\catPP_0 \xot{\src, \tgt} \catPP_1$.
\end{example}

\begin{proposition}
    \label{prop:polygraphs-cocomplete}
    The category $\Pol$ is cocomplete. If $F : \calJJ \longrightarrow \Pol$ is
    diagram, and $n \in \bbNN$, then $(\colim_{i \in \calJJ} Fi)_n \cong
    \colim_{i \in \calJJ} (Fi)_n$.
\end{proposition}

\begin{notation}
    If $\catPP \in \Pol$, we write $\Ctx_n \catPP$ instead of $\Ctx_n
    \catPP_{\leq n-1} [\catPP_n]$ (see \cref{def:category-of-contexts}).
\end{notation}

\begin{proposition}
    [{\cite[proposition 2.1.3]{Guiraud2009}}]
    \label{prop:Guiraud2009:2.1.3}
    Let $\catPP$ be a polygraph, and $C \in \Ctx_n \catPP$. Then $C$ decomposes
    as
    \[
        C \speq d_n \graft_{n-1} (
            d_{n-1} \graft_{n-2} \cdots \: (
                d_1 \graft_0 \ctxplace \graft_0 e_1
            ) \: \cdots \graft_{n-2} e_{n-1}
        ) \graft_{n-1} e_n ,
    \]
    where $d_n, e_n \in \catPP^*_n$, and for $1 \leq i < n$, $d_i$ and $e_i$
    are identities of $i$-cells.
\end{proposition}

\begin{definition}
    [{Whisker \cite[paragraph 2.1.4]{Guiraud2009}}]
    \label{def:whisker}
    Let $\catPP$ be a polygraph. an $n$-\emph{whisker} of
    $\catPP$ is an $n$-context of the form
    $
        d_{n-1} \graft_{n-2} \cdots \: (
            d_1 \graft_0 \ctxplace \graft_0 e_1
        ) \: \cdots \graft_{n-2} e_{n-1}
    $,
    where for $1 \leq i \leq n-1$, $d_i$ and $e_i$ are identities of $i$-cells.
\end{definition}

\begin{remark}
    \label{rem:context-to-whisker}
    If $C$ is an $(n-1)$-context, then by \cref{prop:Guiraud2009:2.1.3}, it
    decomposes as on the left, and induces an $n$-whisker on the right
    \[
        C [\ctxplace] \speq d_{n-1} \graft_{n-2} \cdots \: (
            d_1 \graft_0 \ctxplace \graft_0 e_1
        ) \: \cdots \graft_{n-2} e_{n-1} ,
        \qqquad
        \id_{d_{n-1}} \graft_{n-2} \cdots \: (
            \id_{d_1} \graft_0 \ctxplace \graft_0 \id_{e_1}
        ) \: \cdots \graft_{n-2} \id_{e_{n-1}} ,
    \]
    which we shall also denote by $C$.
\end{remark}

\begin{proposition}
    [{\cite[proposition 2.1.5]{Guiraud2009}}]
    \label{prop:Guiraud2009:2.1.5}
    Let $\catPP$ be a polygraph, $u \in \catPP^*_n$, $k \eqdef \hash u$, and
    assume $k \geq 1$. Then $u$ decomposes as
    \[
        u \speq C_1 [x_1] \graft_{n-1} C_2 [x_2] \graft_{n-1} \cdots
            \graft_{n-1} C_k [x_k] ,
    \]
    where all of the $C_i$'s are $n$-whiskers, and $x_1, \ldots, x_k \in
    \catPP_n$.
\end{proposition}

\begin{definition}
    [{Partial composition \cite[definition 3.8]{Harnik2008}}]
    \label{def:partial-composition}
    Let $\catPP$ be a polygraph, $x, y \in \catPP_n^*$ be $n$-cells, and $C :
    \tgt y \longrightarrow \src x$ be a context. The \emph{partial composition}
    (called \emph{placed composition} in \cite[definition 3.8]{Harnik2008}) $x
    \graft_C y$ is defined as
    $
        x \graft_C y \speqdef x \graft_{n-1} C [y] ,
    $
    where the notation $C[y]$ follows \cref{rem:context-to-whisker}.
\end{definition}

\begin{lemma}
    \label{lemma:partial-composition}
    With $x$, $y$, and $C$ as in \cref{def:partial-composition}, we have $\src
    ( x \graft_C y ) = C [\src y]$ and $\tgt ( x \graft_C y ) = \tgt x$.
\end{lemma}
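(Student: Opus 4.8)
The plan is to reduce the statement to two elementary facts together with a one-line unwinding of \cref{def:partial-composition}. The two facts are: \textbf{(i)} in any $\omega$-category, if $u,v$ are $n$-cells with $\tgt v=\src u$ — so that the $(n-1)$-composite $u\graft_{n-1}v$ is defined — then $\src(u\graft_{n-1}v)=\src v$ and $\tgt(u\graft_{n-1}v)=\tgt u$; and \textbf{(ii)} for an $(n-1)$-context $C=C[\ctxplace]$ and an $n$-cell $z$ of $P^*$ with $\src z$ and $\tgt z$ parallel to $\ctxplace$, the $C$-whisker $C[z]$ is a well-defined $n$-cell with $\src\bigl(C[z]\bigr)=C[\src z]$ and $\tgt\bigl(C[z]\bigr)=C[\tgt z]$. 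Fact (i) is immediate from the globular axioms of $\omega$-categories. Fact (ii) is where the actual content lies, since it requires extending \cref{def:polygraph-context} to a cell living one dimension above the hole: $C[z]$ is obtained by plugging $z$ into the hole of $C$ and reinterpreting every remaining generator and composition of $C$ at level $n$ (i.e. whiskering $z$ by the surrounding cells). One checks it is well defined, and that the source/target formulas hold, by induction on the expression witnessing $\hash_\ctxplace C=1$, using the exchange laws in $P^*$; alternatively one simply cites \cite[definition 3.8]{Harnik2008}, where this is precisely the definition of the $C$-whisker.

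Granting (i) and (ii), the proof is a short computation. First, $\src z$ and $\tgt z$ are parallel (to each other) for any $n$-cell $z$ by globularity when $n\geq 2$ and by convention when $n=1$; since $C:\tgt y\longrightarrow\src x$ is by definition a context with $C[\tgt y]=\src x$, in particular $\tgt y$ is parallel to $\ctxplace$, hence so is $\src y$, so that $C[\src y]$ and the whisker $C[y]$ both make sense. By (ii), $C[y]$ is an $n$-cell with $\tgt\bigl(C[y]\bigr)=C[\tgt y]=\src x$, so $x\graft_{n-1}C[y]$ — that is, $x\graft_C y$ — is defined. Then, applying (i) with $u=x$, $v=C[y]$, and then (ii) once more,
\[
    \src(x\graft_C y)=\src\bigl(x\graft_{n-1}C[y]\bigr)=\src\bigl(C[y]\bigr)=C[\src y] ,
    \qquad
    \tgt(x\graft_C y)=\tgt\bigl(x\graft_{n-1}C[y]\bigr)=\tgt x ,
\]
which are exactly the two claimed identities.

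The only genuine obstacle is fact (ii): pinning down the $C$-whisker of an $n$-cell and checking its naturality in the plugged-in cell with respect to source and target. Everything else is a direct appeal to the $\omega$-category axioms together with the defining equation $x\graft_C y=x\graft_{n-1}C[y]$. I would therefore either isolate (ii) as a short preliminary lemma proved by induction on the structure of $C$, or — in keeping with the paper's referencing of \cite{Harnik2008} — take (ii) from \cite[definition 3.8]{Harnik2008} and reduce the proof of the present lemma to the two-line computation above.
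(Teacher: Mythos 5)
The paper does not supply an argument for this lemma: it attributes the statement directly to \cite[definition 3.8]{Harnik2008}, where the $C$-whisker $C[y]$ and the placed composition $x\graft_C y=x\graft_{n-1}C[y]$ are introduced and these source/target formulas are established as part of the definition. Your proposal reconstructs what Harnik's argument actually is, and it is correct. The decomposition into fact (i), the globular source/target law $\src(u\graft_{n-1}v)=\src v$ and $\tgt(u\graft_{n-1}v)=\tgt u$, and fact (ii), the whiskering rule $\src(C[z])=C[\src z]$ and $\tgt(C[z])=C[\tgt z]$, cleanly isolates where the content sits: (i) is a bare axiom of $\omega$-categories, while (ii) is precisely the part that must be verified when one defines the $C$-whisker of a cell living one dimension above the hole, as you correctly note. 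Your chaining of (i) and (ii) then reproduces the two identities by a two-line computation. You also correctly observe that $\tgt(C[y])=C[\tgt y]=\src x$ guarantees the $(n-1)$-composite is defined. The only thing I would flag is stylistic: since the paper takes the lemma by citation, and since (ii) is itself the definitional content of \cite[definition 3.8]{Harnik2008}, spelling out the induction for (ii) would mostly be re-proving Harnik's definition rather than the present lemma; your closing remark that one may instead just cite (ii) and keep the two-line computation is the right call for this paper.
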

\begin{proof}
    We have $\tgt (x \graft_C y) = \tgt (x \graft_{n-1} C [y]) = \tgt x$. On
    the other hand, $\src (x \graft_C y) = \src C [y]$. By
    \cref{prop:Guiraud2009:2.1.3,rem:context-to-whisker}, $C [y]$ decomposes as
    \[
        C [y] \speq
        \id_{d_{n-1}} \graft_{n-2} \cdots \: (
            \id_{d_1} \graft_0 y \graft_0 \id_{e_1}
        ) \: \cdots \graft_{n-2} \id_{e_{n-1}} ,
    \]
    where for $1 \leq k \leq n$, $d_k$ and $e_k$ are identities of $k$-cells.
    Thus,
    \[
        \src C [y]
        \speq d_{n-1} \graft_{n-2} \cdots \: (
            d_1 \graft_0 (\src y) \graft_0 e_1
        ) \: \cdots \graft_{n-2} e_{n-1}
        \speq C [\src y] .
    \]
\end{proof}

\begin{definition}
    [{Many-to-one polygraph \cite[definition 7.4]{Harnik2008}}]
    \label{def:many-to-one-polygraph}
    Let $\catPP \in \Pol$ be a polygraph. For $n \geq 1$, an $n$-cell $x \in
    \catPP^*_n$ is said \emph{many-to-one} if $\tgt x \in \catPP_{n-1}$, and we
    write $\catPP^\MTO_n$ for the set of many-to-one $n$-cells of $\catPP$. By
    convention, all $0$-cells are many-to-one. In turn, the polygraph $\catPP$
    is called \emph{many-to-one} (or \emph{opetopic}) if all its generators are
    many-to-one. Let $\PolMTO$ be the full subcategory of $\Pol$ spanned by
    many-to-one polygraphs.
\end{definition}

\begin{lemma}
    \label{lemma:whisker-mto}
    Let $u \in \catPP^*_n$ be such that $\hash u \geq 1$. By
    \cref{prop:Guiraud2009:2.1.5}, it decomposes as
    \[
        u \speq C_1 [x_1] \graft_{n-1} C_2 [x_2] \graft_{n-1} \cdots
            \graft_{n-1} C_k [x_k] ,
    \]
    where $k \eqdef \hash u$, where all of the $C_i$'s are $n$-whiskers, and
    $x_1, \ldots, x_k \in \catPP_n$. Then $u$ is a many-to-one cell if and only
    if $C_1 = \ctxplace$, i.e. if $C_1$ is the trivial context.
\end{lemma}
\begin{proof}
    First, note that $\tgt u = \tgt C_1 [x_1]$. Write $C_1$ as
    \[
        C_1 [\ctxplace] \speq d_{n-1} \graft_{n-2} \cdots \: (
            d_1 \graft_0 \ctxplace \graft_0 e_1
        ) \: \cdots \graft_{n-2} e_{n-1} ,
    \]
    where for $1 \leq i \leq n-1$, $d_i$ and $e_i$ are identities of $i$-cells
    (see \cref{def:whisker}). We have
    \[
        \tgt u
        \speq \tgt C_1 [x_1]
        \speq (\tgt d_{n-1}) \graft_{n-2} \cdots \: (
            (\tgt d_1) \graft_0 (\tgt x_1) \graft_0 (\tgt e_1)
        ) \: \cdots \graft_{n-2} (\tgt e_{n-1}) .
    \]
    Thus, $\tgt u$ is a generator if and only if $\tgt d_i$ and $\tgt e_i$ are
    identities, for all $0 \leq i \leq n-1$. In this case, $d_i$ and $e_i$ are
    identity cells of $(i-1)$-cells, thus $C_1 = \ctxplace$. Conversely, if
    $C_1 = \ctxplace$, then $\tgt u = \tgt x_1$ is a generator since $\catPP$
    is a many-to-one polygraph, thus $u$ is a many-to-one cell.
\end{proof}

The following result comes as a polygraphic analogue to
\cref{prop:polynomial-functor:trees-are-graftings}.

\begin{proposition}
    \label{prop:mto-induction}
    A many-to-one $n$-cell $u$ of $\catPP$ is of either of the following forms:
    \begin{enumerate}
        \item $\id_a$ for $a \in \catPP_{n-1}$,
        \item $x \in \catPP_n$,
        \item $v \graft_C x = v \graft_{n-1} C [x]$, for some $v \in
        \catPP^\MTO_n$ with $1 \leq \hash v < \hash u$, $x \in \catPP_n$, and
        $C : \tgt x \longrightarrow \src v$.
    \end{enumerate}
\end{proposition}
\begin{proof}
    If $\hash u = 0$, then $u = \id_a$ for some $a
    \in P^*_{n-1}$. Further, $a = \tgt u \in \catPP_n$. If $\hash u = 1$, then
    by \cref{lemma:whisker-mto}, $u$ is necessarily a generator. If $\hash u =
    k \geq 2$, then by \cref{prop:Guiraud2009:2.1.5}, $u$ decomposes as
    \[
        u \speq C_1 [x_1] \graft_{n-1} C_2 [x_2] \graft_{n-1} \cdots
            \graft_{n-1} C_k [x_k] ,
    \]
    where all of the $C_i$'s are $n$-whiskers, and $x_1, \ldots, x_k \in
    \catPP_n$. Let
    \[
        v \speqdef
        C_1 [x_1] \graft_{n-1} C_2 [x_2] \graft_{n-1} \cdots
            \graft_{n-1} C_{k-1} [x_{k-1}] .
    \]
    Then $C_k$ is a context $\tgt x_k \longrightarrow \src v$, and $u = v
    \graft_{C_k} x_k$. By \cref{lemma:whisker-mto}, and since $u$ is
    many-to-one, $C_1 = \ctxplace$. By \cref{lemma:whisker-mto} again, $v$ is
    many-to-one, and $\hash v = k-1 \geq 1$, finishing the proof.
\end{proof}

\begin{notation}
    For $\catPP \in \PolMTO$, let $\Ctx_n^\MTO \catPP$ be the full subcategory
    of $\Ctx_n \catPP$ generated by many-to-one $n$-cells. In other words, an
    $n$-context $C : u \longrightarrow v$ is in $\Ctx_n^\MTO \catPP$ if $u, v
    \in \catPP^\MTO_n$. Necessarily, such a context is itself a many-to-one
    cell, as $\tgt C [\ctxplace] = \tgt C [u] = \tgt v$ is a generator.
\end{notation}

\begin{definition}
    \label{def:terminal-mtop}
    Define a polygraph $\catTT \in \PolMTO$by $\catTT_0
    \eqdef \{ \optZero \}$, $\catTT_{n+1} \eqdef \left\{ (u, v) \in
    \catTT_n^\MTO \times \catTT_n \mid u \parallel v \right\}$ (see
    \cref{def:many-to-one-polygraph,def:parallel-cells-hcat}), with $\src (u,
    v) \eqdef u$ and $\tgt (u, v) \eqdef v$.
\end{definition}

\begin{proposition}
    \label{prop:terminal-mtop}
    The polygraph $\catTT$ is terminal in $\PolMTO$.
\end{proposition}
\begin{proof}
    For $\catPP \in \PolMTO$, we show that there exists a unique morphism
    $f : \catPP \longrightarrow \catTT$.
    \begin{itemize}
        \item (Existence) If $x \in \catPP_0$, let $f (x) \eqdef \optZero$,
        and if $x \in \catPP_n$ with $n \geq 1$, let $f (x) = (f (\src x), f
        (\tgt x))$. The source and target compatibility is trivial.
        \item (Uniqueness) Consider $g : \catPP \longrightarrow \catTT$.
        Necessarily $g_0 = f_0$ as $\catTT_0$ is a singleton. Let $x \in
        \catPP_n$ with $n \geq 1$. We have
        \begin{equation*}
            f_n (x)
            \speq (f_{n-1} (\src x), f_{n-1} (\tgt x))
            \speq (g_{n-1} (\src x), g_{n-1} (\tgt x))
            \speq (\src g_{n-1} (x), \tgt g_{n-1} (x))
            \speq g_n (x)
        \end{equation*}
        Therefore, $f = g$.
        \qedhere
    \end{itemize}
\end{proof}

\begin{notation}
    \label{not:terminal-map}
    If $\catPP$ is a many-to-one polygraph, we write $\shriek : \catPP
    \longrightarrow \catTT$ for the terminal map.
\end{notation}

\begin{definition}
    \label{def:effective-and-familially-representable}
    \begin{enumerate}
        \item An \emph{effective category} is a
        category $\catCC$ equipped with a functor $F : \catCC \longrightarrow
        \Set$. For example:
        \begin{enumerate}
            \item if $\catAA$ is a small category, then $\Psh\catAA$ (or any
            subcategory thereof) is naturally an effective category with the
            functor $\Psh\catAA \longrightarrow \Set$ mapping a presheaf $X$ to
            $\sum_{a \in \catAA} X_a$;
            \item $\Pol$ (or any subcategory thereof) is an effective category,
            where the functor $\Pol \longrightarrow \Set$ maps a polygraph
            $\catPP$ to $\sum_{n \in \bbNN} \catPP_n$.
        \end{enumerate}

        \item A category $\catCC$ is an \emph{effective presheaf category} if
        it is effective, and equivalent, as an effective category\footnote{i.e.
        the equivalence functor commutes with the equiped functors to $\Set$.},
        to a presheaf category.

        \item A functor $F : \catCC \longrightarrow \Set$ is \emph{familially
        representable} \cite[definition 2.4]{Carboni1995} if $F \spcong \sum_{i
        \in I} \catCC (c_i, -)$ for some family $\left\{ c_i \mid i \in I
        \right\}$ of objects of $\catCC$.
    \end{enumerate}
\end{definition}

\begin{theorem}
    \label{th:Henry2019}
    \begin{enumerate}
        \item \cite[corollary 2.4.9]{Henry2019} The category $\PolMTO$ is a
        \emph{good class of polygraphs} \cite[definition 2.2.2]{Henry2019}, and
        in particular
        \begin{enumerate}
            \item it is an effective presheaf category;
            \item for all $n \in \bbNN$, the functor $(-)^*_n : \PolMTO
            \longrightarrow \Set$ that maps a polygraph $\catPP \in \PolMTO$ to
            its set $\catPP_n^*$ of $n$-cells is familially representable, and
            the representing objects are called the \emph{opetopic
            $n$-polyplexes} (or just \emph{$n$-polyplexes}):
            \[
                \catPP_n^* \spcong \sum_{
                    \substack{\omega \text{ is an opetopic} \\
                         n \text{-polyplex}}
                } \PolMTO (\omega, \catPP) .
            \]
        \end{enumerate}

        \item \cite[proposition 2.2.6]{Henry2019} The opetopic $n$-polyplexes
        are in bijective correspondence with $\catTT^*_n$. Isomorphic
        polyplexes are equal. If $u \in \catTT^*_n$, let $\uU$ be the
        associated polyplex (refer to \cite[section 2.3]{Henry2019} for the
        precise construction). The isomorphism above can be reformulated as
        \[
            \triangleURdiagram
                {\catPP_n^*}{\sum_{u \in \catTT^*_n} \PolMTO (\uU, \catPP)}
                    {\catTT_n^* ,}
                {\cong}{\shriek}{}
        \]
        where the vertical morphism maps an element in the $u \in \catTT^*_n$
        component of the sum to $u$. In other words, a cell $v \in \catPP^*_n$
        corresponds to a unique morphism of the form $\uU \longrightarrow
        \catPP$, and $u = \shriek v$ (see \cref{not:terminal-map}).

        \item \cite[lemma 2.4.4, corollary 2.3.13]{Henry2019} Let $0 \leq k <
        n$, $a \in \catTT^*_k$, and $u, v \in \catTT^*_n$ be such that
        $\tgt^{n-k} v = a = \src^{n-k} u$. Then we have natural maps
        $\src^{n-k} : \uA \longrightarrow \uU$ and $\tgt^{n-k} : \uA
        \longrightarrow \uV$, and $\underline{u \graft_k v}$ is obtained as the
        pushout
        \[
            \pushoutdiagram
                {\uA}{\uV}{\uU}{\underline{u \graft_k v} .}
                {\tgt^{n-k}}{\src^{n-k}}{\iota_v}{\iota_u}
        \]
        Furthermore, the maps $\iota_u$ and $\iota_v$ are injective on $(n-1)$-
        and $n$-cells.

    \end{enumerate}
\end{theorem}

\begin{example}
    \label{ex:polyplex}
    By \cref{def:terminal-mtop}, $\catTT$ has a unique $0$-cell $\optZero$, and
    the corresponding polyplex $\underline{\optZero}$ is simply the polygraph
    with a single $0$-generator. Indeed, $\PolMTO (\underline{\optZero}, -)$
    maps a polygraph $\catPP \in \PolMTO$ to its set of $0$-cells $\catPP_0$.

    If we write $\optOne \eqdef (\optZero, \optZero)$ for the unique
    $1$-generator of $\catTT$, then
    \[
        \catTT^*_1 \speq \left\{
            \:\:
            \id_{\optZero}, \:\:
            \optOne, \:\:
            \optOne \graft_0 \optOne, \:\:
            \optOne \graft_0 \optOne \graft_0 \optOne, \:\:
            \optOne \graft_0 \optOne \graft_0 \optOne \graft_0 \optOne, \:\:
            \ldots \:\:
        \right\} .
    \]
    Write $l_0 \eqdef \id_{\optZero}$, and $l_k$ for the composite $\optOne
    \graft_0 \cdots \graft_0 \optOne$ of $k$ instances of $\optOne$. Then the
    polyplex $\underline{l_0}$ is simply $\underline{\optZero}$, and
    $\underline{l_k}$ spans the free category on the linear graph with $k$
    vertices
    $
        \bullet \longrightarrow
        \bullet \longrightarrow
        \cdots \longrightarrow \bullet
    $.
    Indeed, let $\catPP \in \PolMTO$. Then a $1$-cell $u$ of $\catPP$ is either
    \begin{enumerate}
        \item an identity of a $0$-cell;
        \item a sequence of $k$ composable $1$-generators of $\catPP$.
    \end{enumerate}
    If $u = \id_a$ for some $a \in \catPP_0$, then it is uniquely identified
    (as a $1$-cell) by a morphism $\underline{l_0} \longrightarrow \catPP$
    mapping the unique $0$-cell of $\underline{l_0}$ to $a$. If $u$ is a
    composite of $k$ generators, then uniquely identified (as a $1$-cell) by a
    morphism $\underline{l_k} \longrightarrow \catPP$. In conclusion,
    \[
        \catPP^*_1
        \spcong \sum_{k \in \bbNN} \PolMTO (\underline{l_k}, \catPP)
        \speq \sum_{v \in \catTT^*_1} \PolMTO (\uV, \catPP) .
    \]
\end{example}

\begin{remark}
    If $\uU$ is an $n$-polyplex, then under the isomorphism of
    \cref{th:Henry2019} (2), the identity morphism $\uU \longrightarrow \uU$
    corresponds to an $n$-cell of $\uU$, which we call its \emph{fundamental
    cell}, and following \cite{Henry2019}, denote by $u$. If $\catPP$ is a
    many-to-one polygraph, $v \in \catPP^*_n$, and $u = \shriek v$, then the
    map $f : \uU \longrightarrow \catPP$ corresponding to $v$ maps the
    fundamental cell $u$ to $v$. Indeed, by naturality of the isomorphism of
    \cref{th:Henry2019} (2), the diagram on the left commutes, which results in
    the mappings displayed on the right:
    \[
        \squarediagram
            {\uU^*_n}{\sum_{w \in \catTT^*_n} \PolMTO (\uW, \uU)}
                {\catPP^*_n}{\sum_{w \in \catTT^*_n} \PolMTO (\uW, \catPP)}
            {\cong}{f}{f \circ -}{\cong}
        \qqquad
        \begin{tikzcd}
            u
                \ar[r, mapsto]
                &
            \id_{\uU}
                \ar[d, mapsto]
                \\
            v
                \ar[r, mapsto]
                &
            f.
        \end{tikzcd}
    \]
    Necessarily, $f (u) = v$.
\end{remark}

\begin{lemma}
    [{\cite[lemma 2.4.5]{Henry2019}}]
    \label{lemma:Henry2019:2.4.5}
    Let $n \geq 1$, $\uU$ be an $n$-polyplex, and $u$ be its fundamental cell.
    For $a \in \uU_{n-1}$, exactly one of the following two possibilities
    is true:
    \begin{enumerate}
        \item $a$ occurs in $\src u$;
        \item $a$ is the target of a $n$-generator of $\uU$.
    \end{enumerate}
    Furthermore, in the second case, the $n$-generator in question is unique.
\end{lemma}

\begin{lemma}
    [{\cite[remark 2.2.9, corollary 2.2.13]{Henry2019}}]
    \label{lemma:cartesian-compositions}
    Let $f : \catPP \longrightarrow \catQQ$ a morphism of many-to-one
    polygraphs, and recall from \cref{def:omega-category} that $\catPP^*_{n, k}
    = \catPP^*_n \times_{\catPP^*_k} \catPP^*_n$. The following square is
    cartesian
    \[
        \squarediagram
            {\catPP^*_{n, k}}{\catPP^*_n}{\catQQ^*_{n, k}}{\catQQ^*_n .}
            {\graft_k}{f}{f}{\graft_k}
    \]
    Consequently, for $u_1, u_2, v_1, v_2 \in \catPP^*_n$, if $u_1 \graft_k v_1
    = u_2 \graft_k v_2$, $f (u_1) = f(u_2)$, and $f (v_1) = f(v_2)$, then $u_1
    = u_2$ and $v_1 = v_2$.
\end{lemma}
\begin{proof} [Proof (sketch)]
    Let us first consider the case $\catQQ = \catTT$, and let $u, v \in
    \catPP_n^*$ be such that $\tgt^{n-k} v = \src^{n-k} u = a$. In particular,
    pair $(u, v)$ is in $\catPP^*_{n, k}$. We have a series of correspondences
    \begin{prooftree}
        \AxiomC{A tuple $(u, v) \in \catPP^*_{n, k}$}
        \RightLabel{\cref{th:Henry2019} (2)}
        \UnaryInfC{A map $
            \underline{\shriek u} \coprod_{\underline{\shriek a}}
                \underline{\shriek v}
            \longrightarrow \catPP
        $}
        \RightLabel{\cref{th:Henry2019} (3)}
        \UnaryInfC{A map $
            \underline{\shriek (u \graft_k v)} \longrightarrow \catPP
        $ with a decomposition of $\shriek (u \graft_k v)$ as $x \graft_k y$}
        \RightLabel{\cref{th:Henry2019} (2)}
        \UnaryInfC{An element $u \graft_k v \in \catPP^*_n$ with a
            decomposition of $\shriek (u \graft_k v)$ as $x \graft_k y$.}
    \end{prooftree}
    In other words, the following square is a pullback
    \[
        \squarediagram
            {\catPP^*_{n, k}}{\catPP^*_n}{\catTT^*_{n, k}}{\catTT^*_n .}
            {\graft_k}{\shriek}{\shriek}{\graft_k}
    \]
    For the general case, note that in the following diagram, the lower and
    outer squares are cartesian, and by the pasting lemma, so is the upper one:
    \[
        \begin{tikzcd}
            \catPP^*_{n, k}
                \ar[r, "\graft_k"] \ar[d, "f" right]
                \ar[dd, bend right, "\shriek" left] &
            \catPP^*_n
                \ar[d, "f" left] \ar[dd, bend left, "\shriek" right] \\
            \catQQ^*_{n, k}
                \pullbackcorner
                \ar[r, "\graft_k"] \ar[d, "\shriek" right] &
            \catQQ^*_n
                \ar[d, "\shriek" left] \\
            \catTT^*_{n, k}
                \ar[r, "\graft_k"] &
            \catTT^*_n .
        \end{tikzcd}
    \]
\end{proof}

Let $\catPP \in \PolMTO$ and $v \in \catPP_n^\MTO$. Then $v$ is a composition
of $n$-generators of $\catPP$, which are many-to-one, so intuitively, $v$ is a
``tree of $n$-generators''. In this section, we make this idea formal. We first
define a polynomial functor $\nabla_n \catPP$ whose operations are the
$n$-generators of $\catPP$ (\cref{def:nabla-construction}), and then construct
the \emph{composition} $\composition{T} \in \catPP_n^\MTO$ of a $\nabla_n
\catPP$-tree $T$. In \cref{proposition:composition-map-bijective}, we show that
this construction is bijective.

\begin{definition}
    [The $\nabla$ construction]
    \label{def:nabla-construction}
    For $\catPP \in \PolMTO$ and $n \geq 1$, let $\nabla_n \catPP$
    be the following polynomial endofunctor:
    \[
        \polynomialfunctor
            {\catPP_{n-1}}{\catPP^\nodesymbol_n}{\catPP_n}{\catPP_{n-1},}
            {\src}{p}{\tgt}
    \]
    where for $x \in \catPP_n$, the fiber $\catPP^\nodesymbol_n (x)$ is the set
    of primitive contexts over $\src x$, and for $C : a \longrightarrow \src x$
    in $\catPP^\nodesymbol_n (x)$, $\src (C) \eqdef a$, $p (C) \eqdef x$, and
    $\tgt$ is the target map of $\catPP$.
\end{definition}

\begin{lemma}
    \label{lemma:primitive-context-of-mto-cells}
    Let $\catPP, \catQQ \in \PolMTO$, and $f : \catPP \longrightarrow \catQQ$.
    For $v \in \catPP_n^\MTO$, write $E (v)$ for the set of primitive contexts
    over $v$, and likewise for many-to-one cells of $\catQQ$. Then $f$ induces
    a bijection $E(v) \longrightarrow E(f (v))$.
\end{lemma}
\begin{proof}
    We proceed by induction on $v$ (see \cref{prop:mto-induction}).
    \begin{enumerate}
        \item If $v$ is an identity (resp. a generator), then so is $f (v)$,
        thus $E (v)$ and $E (f (v))$ are both empty (resp. singletons).
        Trivially, $f : E (v) \longrightarrow E (f (v))$ is a bijection.
        \item Assume that $v$ decomposes as $v = w \graft_C x$ with $\hash w
        \geq 1$ and $x \in \catPP_n$, and $C : \tgt x \longrightarrow \src w$.
        Then a primitive context over $v$ is either $w \graft_C \ctxplace$ or
        of the form $D [\ctxplace] \graft_C x$ for $D \in E(w)$, and $E (v)
        \cong 1 + E (w)$. Likewise, $E (f (v)) \cong 1 + E (f (w))$, and it is
        straightforward to check that $f : E (v) \longrightarrow E (f (v))$ is
        indeed a bijection.
        \qedhere
    \end{enumerate}
\end{proof}

\begin{proposition}
    \label{prop:nabla-construction-induced-morphism}
    Let $f : \catPP \longrightarrow \catQQ$ be a morphism of many-to-one
    polygraphs. For all $n \geq 1$, it induces a morphism of polynomial
    functors $\nabla_n f : \nabla_n \catPP \longrightarrow \nabla_n \catQQ$,
    where $(\nabla_n f)_1 = f_n : \catPP_n \longrightarrow \catQQ_n$.
\end{proposition}
\begin{proof}
    Consider
    \[
        \begin{tikzcd}
            \catPP_{n-1}
                \ar[d, "f"'] &
            \catPP^\nodesymbol_n
                \ar[r, "p"]
                \ar[d, "f^\nodesymbol" left]
                \ar[l, "\src" above] &
            \catPP_n
                \ar[r, "\tgt"]
                \ar[d, "f" left] &
            \catPP_{n-1}
                \ar[d, "f" left] \\
            \catQQ_{n-1} &
            \catQQ^\nodesymbol_n
                \ar[r, "p"]
                \ar[l, "\src" above] &
            \catQQ_n
                \ar[r, "\tgt"] &
            \catQQ_{n-1}
        \end{tikzcd}
    \]
    where $f^\nodesymbol_n$ maps a context $C : a \longrightarrow \src x$ to $f
    (C) : f (a) \longrightarrow f (\src x)$. Clearly, all squares commute, and
    by \cref{lemma:primitive-context-of-mto-cells}, the middle one is
    cartesian.
\end{proof}

\begin{definition}
    [Composition]
    \label{def:composition}
    We define the \emph{composition} operation
    $\composition{(-)} : \tree \nabla_n \catPP \longrightarrow
    \catPP_n^\MTO$. At
    the same time, we establish a bijection between $T^\leafsymbol$ and the
    primitive contexts over $\src T^\circ$, where $T \in \tree \nabla_n
    \catPP$.
    \begin{enumerate}
        \item If $a \in \catPP_{n-1}$, then $(\itree{a})^\circ \eqdef \id_a$.
        Note that the only primitive context over $\src \id_a$ is $\ctxplace :
        a \longrightarrow a$, and let $C_{[]} \eqdef \ctxplace$.

        \item If $x \in \catPP_n$, then $(\ytree{x})^\circ \eqdef x$. Note that
        by definition of $\nabla_n \catPP$ (\cref{def:nabla-construction}) we
        have $\ytree{x}^\leafsymbol = \left\{ [D] \mid D \in x^\nodesymbol
        \right\}$ (see \cref{rem:elementary-p-trees:addresses}), and let
        $C_{[D]} \eqdef D$.

        \item Consider a tree of the form $S = T \graft_{[l]} \ytree{x}$, with
        $T \in \tree \nabla_n \catPP$ having at least one node, $[l] \in
        T^\leafsymbol$ and $x \in \catPP_n$. By induction, the leaf $[l]$
        corresponds to a primitive context $C_{[l]} : a \longrightarrow \src
        T^\circ$, and moreover, $a = \tgt x$. Let $S^\circ \eqdef T^\circ
        \graft_{C_{[l]}} x$. Let $[l'] \in S^\leafsymbol$. If $[l']$ is of the
        form $[l D]$, for some $[D] \in \ytree{x}^\leafsymbol$, let $C_{[l']}
        \eqdef C_{[l]} [D]$. Otherwise, $[l']$ is a leaf of $T$, and so
        $C_{[l']}$ is already defined.

        If $S = T' \graft_{[l']} \ytree{x'}$ is another decomposition of $S$,
        then the fact that $T^\circ \graft_{C_{[l]}} x = (T')^\circ
        \graft_{C_{[l']}} x'$ follows from the exchange law.
    \end{enumerate}
\end{definition}

\begin{definition}
    [Composition tree]
    \label{def:composition-tree}
    A \emph{composition tree} is simply a $\nabla_n
    \catPP$-tree. If $v \in \catPP_n^\MTO$ and $T$ is a composition tree such
    that $\composition{T} = v$, then we say that $T$ is a composition tree of
    $v$.
\end{definition}

The following result generalizes \cref{lemma:cartesian-compositions}.

\begin{proposition}
    \label{prop:cartesian-compositions}
    Let $f : \catPP \longrightarrow \catQQ$ a morphism of many-to-one
    polygraphs. The following square is cartesian
    \[
        \squarediagram
            {\tree \nabla_n \catPP}{\catPP_n^\MTO}
                {\tree \nabla_n \catQQ}{\catQQ_n^\MTO.}
            {\composition{(-)}}{f}{f}{\composition{(-)}}
    \]
\end{proposition}
\begin{proof}
    This amounts to showing that if $v \in \catPP_n^\MTO$, then $f$ establishes
    a bijective correspondence between the composition trees of $v$ and $f
    (v)$. This is clear if $\hash v \leq 1$, so let us assume $\hash v \geq 2$,
    and let $T$ be a composition tree of $v$. Then $T$ decomposes as
    \[
        T \speq \ytree{a} \biggraft_{[C]} T_C ,
    \]
    where $a \in \catQQ_n$, $C$ ranges over the primitive contexts over $\src
    a$, and $T_C \in \tree \nabla_n \catQQ$. Then, by
    \cref{lemma:cartesian-compositions}, there exists a unique $b \in \catPP_n$
    such that $f(b) = a$, and unique $v_C$'s such that $f(v_C) =
    \composition{T_C}$. Further, $f$ exhibits a bijection between the primitive
    contexts over $a$ and over $b$. Note that $\hash v_C < \hash v$, so by
    induction, there exists a unique $S_C \in \tree \nabla_n \catPP$ such that
    $f (S_C) = T_C$. Finally,
    \[
        S \speqdef \ytree{b} \biggraft_{[D]} S_{f(D)}
    \]
    is the unique tree such that $f(S) = T$.
\end{proof}

\begin{lemma}
    \label{lemma:composition-tree:fundamental-cell}
    Let $\uU$ be a polyplex. The fundamental cell $u \in \uU_n$ has at most one
    composition tree.
\end{lemma}
\begin{proof}
    Assume that $S$ and $T$ are two composition trees of $u$. Necessarily,
    $\edg_{[]} S = \tgt u = \edg_{[]} T$, i.e. the root edges of $S$ and $T$
    are both decorated by the target of $u$ (recall the notation from
    \cref{not:edg,def:o}). By \cref{lemma:Henry2019:2.4.5}, exactly one of the
    following two possibilities is true.
    \begin{enumerate}
        \item If $\tgt u$ occurs in $\src u$, then $u$ is an identity cell, and
        $S = \itree{\tgt u} = T$. We are done.

        \item Otherwise $\tgt u$ is the target of a unique $n$-generator of
        $\uU$, say $x$. In particular, $\src_{[]} S = x = \src_{[]} T$, i.e.
        the root nodes of $S$ and $T$ are decorated by $x$. Let $a$ be an
        $(n-1)$-generator occurring in $\src x$. It decorates an input edge $e$
        (resp. $e'$) of the root node of $S$ (resp. $T$). If $a$ occurs in
        $\src u$, then by \cref{lemma:Henry2019:2.4.5} again, there is no
        $n$-generator whose target is $a$. So in $S$ (resp. $T$), there cannot
        be a node above $e$ (resp. $e'$), i.e. $e$ (resp. $e'$) is a leaf.
        Otherwise, $a$ is the target of a unique $n$-generator $y$, and
        necessarily, the nodes above $e$ and $e'$, namely $\src_{[e]} S$ and
        $\src_{[e']} T$ are decorated by $y$. Applying
        \cref{lemma:Henry2019:2.4.5} repeatedly, we show that $S = T$.
        \qedhere
    \end{enumerate}
\end{proof}

\begin{proposition}
    \label{proposition:composition-map-bijective}
    The composition map $\composition{(-)} : \tree \nabla_n \catPP
    \longrightarrow \catPP_n^\MTO$ is a bijection.
\end{proposition}
\begin{proof}
    \begin{itemize}
        \item (Surjectivit) This clearly holds if $n \leq 1$. Assume $n \geq
        2$ and let $v \in \catPP^\MTO_n$. If $v = \id_a$ for some $a \in
        \catPP_{n-1}$, then $v = \composition{\itree{a}}$. If $v \in \catPP_n$,
        then $v = \composition{\ytree{v}}$. Otherwise, by
        \cref{prop:mto-induction}, $v$ decomposes as $w \graft_C x$, where $w
        \in \catPP_n^\MTO$, $x \in \catPP_n$, and $C : \tgt x \longrightarrow
        \src w$ is a context. By induction, there exist $T \in \tree \nabla_n
        \catPP$ such that $\composition{T} = w$. By construction, $C$
        corresponds to a unique leaf address $[l] \in T^\leafsymbol$. Finally,
        $v = \composition{\left( T \graft_{[l]} \ytree{x} \right)}$.

        \item (Injectivit) Let $v \in \catPP_n^\MTO$, $u \eqdef \shriek v$,
        and $f : \uU \longrightarrow \catPP$ be the map associated with $v$
        (see \cref{th:Henry2019} (2)). By \cref{prop:cartesian-compositions},
        the following square is cartesian
        \[
            \pullbackdiagram
                {\tree \nabla_n \uU}{\uU_n^\MTO}
                    {\tree \nabla_n \catPP}{\catPP_n^\MTO ,}
                {\composition{(-)}}{f}{f}{\composition{(-)}}
        \]
        and in particular, since $f$ maps the fundamental cell $u$ to $v$, it
        induces a bijection between the composition trees of $u \in \uU_n^\MTO$
        and $v \in \catPP_n^\MTO$. By
        \cref{lemma:composition-tree:fundamental-cell} and surjectivity proved
        above, $u$ has exactly one composition tree, and thus, so does $v$.
        \qedhere
    \end{itemize}
\end{proof}

\begin{notation}
    Let the \emph{composition tree} operation $\ct : \catPP_n^\MTO
    \longrightarrow \tree \nabla_n \catPP$ be the inverse of the composition
    operation $\composition{(-)}$ of \cref{def:composition}.
\end{notation}

\begin{corollary}
    \label{coroll:mto-decomposition}
    Let $\catPP \in \PolMTO$ and $v \in \catPP^\MTO_n$ with $\hash v \geq 1$.
    Then $v$ uniquely decomposes as
    \[
        v \speq x \biggraft_C v_C
    \]
    where $x \in \catPP_n$, $C$ ranges over the primitive contexts over $\src
    x$, and $v_C \in \catPP^\MTO_n$.
\end{corollary}
\begin{proof}
    The composition tree of $v$ decomposes uniquely as
    \[
        \ct v \speq \ytree{x} \biggraft_{[C]} T_C ,
    \]
    and applying back $\composition{(-)}$ gives the desired decomposition of
    $v$.
\end{proof}

\begin{corollary}
    \label{coroll:mto-decomposition-context}
    Let $\catPP \in \PolMTO$. A many-to-one context $C \in \Ctx^\MTO_n \catPP$
    decomposes uniquely as
    \[
        C
        \speq C [\ctxplace]
        \speq x \graft_D \ctxplace \biggraft_E v_E
    \]
    where $x, v_E \in \catPP^\MTO_n$.
\end{corollary}
\begin{proof}
    We proceed by induction on $\hash C$. Since $\hash_\ctxplace C = 1$ (i.e.
    $C$ has only one occurrence of $\ctxplace$, see
    \cref{def:counting-function}), we have $\hash C \geq 1$. By
    \cref{coroll:mto-decomposition}, $C$ uniquely decomposes as
    \[
        C \speq y \biggraft_F u_F .
    \]
    Either one of following two possibilities is true.
    \begin{enumerate}
        \item If $y = \ctxplace$, then consider
        \[
            C \speq \id_{\tgt \ctxplace} \graft_\boxdot \ctxplace
                \biggraft_F u_F ,
        \]
        where $\boxdot$ is the trivial context $\tgt \ctxplace \longrightarrow
        \tgt \ctxplace$.

        \item Otherwise, there exists a unique $F \in y^\nodesymbol$ such that
        $\hash_\ctxplace u_F = 1$. By induction, $u_F$ decomposes as on the
        left, and consider the decomposition of $C$ as on the right:
        \[
            u_F \speq z \graft_G \ctxplace \biggraft_H w_H ,
            \qqquad
            C \speq \left( y \graft_F z \biggraft_{G \neq F} u_G \right)
                \graft_F \ctxplace \biggraft_H w_H .
        \]
        \qedhere
    \end{enumerate}
\end{proof}

\begin{remark}
    Given a many-to-one cell as on the left below,
    \cref{coroll:mto-decomposition} decomposes it as on the right.
    \[
        \tikzinput[.8]{opetope-composition}{cell}
        \qquad \longleadsto \qquad
        \tikzinput[.8]{opetope-composition}{decomposition-mto}
    \]
    Given a context as on the left below,
    \cref{coroll:mto-decomposition-context} detaches $\ctxplace$ from the cell
    ``below'' and the cells ``above'' it:
    \[
        \tikzinput[.8]{opetope-composition}{cell-ctx}
        \qquad \longleadsto \qquad
        \tikzinput[.8]{opetope-composition}{decomposition-isolation}
    \]
\end{remark}

\begin{remark}
    \label{rem:mto-contexts}
    Let $C : x \longrightarrow y$ be an $n$-context of $\catPP \in \PolMTO$
    between many-to-one cells. In particular, $C$ is a many-to-one cell in the
    extended category $\catQQ$ of \cref{def:polygraph-context}, so by virtue of
    \cref{coroll:mto-decomposition-context}, it uniquely decomposes as
    \[
        C \speq z \graft_D \ctxplace \biggraft_E v_E .
    \]
    where $z, v_E \in \catPP^*_n$. Since $C [x] = y$, we have
    \[
        y \speq z \graft_D x \biggraft_E v_E .
    \]
    Conversely, any decomposition of $y$ of the form above induces a context $C
    : x \longrightarrow y$. In particular, the number of primitive contexts
    over $y$ is $\hash y$, and $\nabla_n \catPP$ is finitary.
\end{remark}

\begin{definition}
    We now extend $\composition{(-)}$ and $\ct$ to functors between $\tree
    \nabla_n \catPP$ and $\Ctx_n^\MTO \catPP$. On objects, they are respectively
    defined in \cref{def:composition,def:composition-tree}.
    \begin{enumerate}
        \item Let $f : T \longrightarrow S$ be a morphism in $\tree \nabla_n
        \catPP$. It corresponds to a decomposition of $S$ as on the left, and
        let $f^\circ$ be the context $T^\circ \longrightarrow S^\circ$ on the
        right:
        \[
            S \speq U \graft_{[p]} T \biggraft_{[l]} V_{[l]} ,
            \qqquad
            f^\circ \speqdef U^\circ \graft_{C_{[p]}} \ctxplace
                \biggraft_{C_{[l]}} V_{[l]}^\circ ,
        \]
        where $[l]$ ranges over $T^\leafsymbol$.
        \item Let $C : x \longrightarrow y$ be an $n$-context. By
        \cref{coroll:mto-decomposition-context}, it decomposes uniquely as on
        the left, and let $\ct C$ correspond to the decomposition of $\ct y$ on
        the right:
        \[
            C [\ctxplace] \speq z \graft_{D} \ctxplace \biggraft_{E} t_E,
            \qqquad
            \ct y
            \speq (\ct z) \graft_{[l_D]} (\ct x) \biggraft_{[l_E]} (\ct t_E) ,
        \]
        where $E$ ranges over all primitive contexts over $\src x$.
    \end{enumerate}
\end{definition}

One readily checks the following:

\begin{proposition}
    [Composition tree duality]
    \label{prop:composition-tree-duality}
    The functors $(-)^\circ$ and $\ct$ are mutually inverse isomorphisms of
    categories.
\end{proposition}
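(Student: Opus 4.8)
The plan is to verify that $(-)^\circ$ and $\ct$ are mutually inverse, first on objects and then on morphisms, by parallel inductions that mirror the two induction principles already in play: \cref{prop:polynomial-trees-are-graftings} for trees $T \in \tree\nabla_n P$, and \cref{coroll:mto-induction} for many-to-one cells $x \in P_n^*$. Both functors are defined by recursion along exactly these principles, and the leaf-context bijections are built into the definitions, so the work is entirely bookkeeping: check that the recursive clauses match up case by case, and that the auxiliary bijections between leaves and irreducible contexts are mutually inverse as well.

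First I would prove $\ct(T^\circ) \cong T$ naturally in $T$, by induction on $T$. For $T = \itree{a}$ we have $T^\circ = \id_a$ and $\ct\id_a = \itree{a}$; for $T = \ytree{x}$ we have $T^\circ = x$ and $\ct x = \ytree{x}$, and in both base cases the leaf-to-context correspondences $[l]\mapsto C_{[l]}$ and $C\mapsto[l_C]$ are visibly inverse (they are identities on the indexing sets, since $\ytree{x}^\leafsymbol = \{[D] \mid D \text{ irred.\ ctx.\ over }\src x\}$ by \cref{rem:elementary-p-trees:addresses} and \cref{def:nabla-construction}). For the grafting step $S = T\graft_{[l]}\ytree{x}$, the definition of $(-)^\circ$ gives $S^\circ = T^\circ \graft_{C_{[l]}} x$; since $C_{[l]}$ is, by the inductive leaf-context bijection, an irreducible context over $\src T^\circ$ with $a = \tgt x$, the cell $S^\circ$ has exactly the form $y\graft_C x$ to which the third clause of \cref{def:composition-tree} applies, and $\ct(S^\circ) = \ct(T^\circ)\graft_{[l_{C_{[l]}}]}\ytree{x}$. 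By induction $\ct(T^\circ)\cong T$ and the leaf $[l_{C_{[l]}}]$ is carried to $[l]$, so $\ct(S^\circ)\cong S$; one then checks the new leaves $[lD]$ on both sides match, which is immediate from the grafting-address formula \cref{lemma:grafting-addresses}. The reverse composite $(\ct x)^\circ \cong x$ is entirely symmetric: induct on $x$ via \cref{coroll:mto-induction}, using the same three clauses read in the other direction, and note that the identity $\edg_{[l_C]}\ct y = \tgt x$ recorded in \cref{def:composition-tree} is precisely what makes the grafting $(\ct y \graft_{[l_C]}\ytree{x})^\circ = (\ct y)^\circ \graft_{C} x$ well-defined and equal to $z$.

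Next I would check functoriality and that the two functors agree on morphisms. A morphism $f : T\to S$ in $\tree\nabla_n P$ is the same data as a decomposition $S = U\graft_{[p]} T \biggraft_{[l]} V_{[l]}$ (\cref{rem:tree-contexts}), and $f^\circ$ is defined to be the context $U^\circ\graft_{C_{[p]}}\ctxplace\biggraft V_{[l]}^\circ$; dually, a context $C : x\to y$ is the same data as a decomposition $y = z\graft_D x \biggraft_E t_E$ (\cref{rem:mto-contexts}). Applying $\ct$ to $f^\circ$ and using the object-level result together with the leaf-context bijection sends this decomposition of $S^\circ$ to the corresponding decomposition of $S$, which is exactly $f$; and conversely for $(\ct C)^\circ$. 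Since composition in both $\tree\nabla_n P$ and $\Ctx_n^\MTO P$ is given by pasting these decompositions, and the translation respects pasting (again by \cref{lemma:grafting-addresses} on one side and associativity of partial composition, \cite[theorem 3.9]{Harnik2008}, on the other), both assignments are functorial and mutually inverse on morphisms.

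The main obstacle is not conceptual but notational: one must be scrupulous about the leaf-address versus irreducible-context dictionary through the grafting step, in particular that a leaf $[l']$ of $S = T\graft_{[l]}\ytree{x}$ is either inherited from $T$ or of the form $[lD]$, and that the two definitions assign the context $C_{[l']}$ (resp.\ the leaf $[l_D]$) in the matching way in each subcase — which is exactly the ``same time'' clause spelled out in \cref{def:composition} and \cref{def:composition-tree}. Once the base cases and this single inductive step are pinned down, naturality and the inverse property fall out formally, justifying the ``one readily checks'' preceding the statement.
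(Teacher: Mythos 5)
Your proposal is correct, and it is precisely the parallel induction that the paper silently intends: the paper provides no explicit proof (only the remark ``One readily checks the following''), and the expected verification is exactly the one you give — mirror the recursive clauses of \cref{def:composition} and \cref{def:composition-tree} along \cref{prop:polynomial-trees-are-graftings} and \cref{coroll:mto-induction}, checking the leaf/irreducible-context dictionaries are mutually inverse at each step and then handling morphisms via the decompositions in \cref{rem:tree-contexts} and \cref{rem:mto-contexts}. One small notational point: where you write $\ct(T^\circ)\cong T$ you in fact establish (and need) strict equality $\ct(T^\circ) = T$, since the claim is an isomorphism of categories and your induction does track objects exactly.
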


\begin{corollary}
    \label{coroll:primitive-context-tree}
    For $n \geq 2$ and $x \in \catPP_n$, the functor $\ct$ induces a natural
    bijection
    \[
        \catPP^\nodesymbol_n (x)
        \spcong
        \sum_{a \in \catPP_{n-1}}
            (\tr \nabla_{n-1} \catPP) (\ytree{a}, \ct \src x) .
    \]
\end{corollary}
\begin{proof}
    Direct consequence of \cref{prop:composition-tree-duality}.
\end{proof}

\begin{notation}
    \label{not:source-of-a-generator}
    If $x \in \catPP_n$ and $[p] \in (\ct \src x)^\nodesymbol$, then we write
    $\src_{[p]} x$ instead of $\src_{[p]} \ct \src x \in \catPP_{n-1}$.
\end{notation}

\section{The equivalence}
\label{sec:equialence}

We now aim at proving that the category of opetopic sets, i.e.
$\Set$-presheaves over the category $\bbOO$ defined previously, is equivalent
to the category of many-to-one polygraphs $\PolMTO$. We achieve this by first
constructing the \emph{polygraphic realization} functor $\polyreal{-} : \bbOO
\longrightarrow \PolMTO$. This functor ``realizes'' an opetope as a polygraph
that freely implements all its tree structure by the means of adequately chosen
generators in each dimension. Secondly, we consider the left Kan extension
$\polyreal{-} : \PshO \longrightarrow \PolMTO$ along the Yoneda embedding. This
functor has a right adjoint, the ``opetopic nerve'' $N : \PolMTO
\longrightarrow \PshO$, and we prove this adjunction to be an adjoint
equivalence. This is done using the \emph{shape function}, defined in
\cref{sec:shape}, which to any generator $x$ of a many-to-one polygraph
$\catPP$ associates an opetope $\shape{x}$ along with a canonical morphism
$\tildX : \polyreal{\shape{x}} \longrightarrow \catPP$.

\subsection{Polygraphic realization}
\label{sec:polygraphic-realization}

An opetope $\omega \in \bbOO_n$, with $n \geq 1$, has one target $\tgt \omega$,
and sources $\src_{[p]} \omega$ laid out in a tree. If the sources $\src_{[p]}
\omega$ happened to be generators in some polygraph, then that tree would
describe a way to compose them. With this in mind, we define a many-to-one
$n$-polygraph $\polyreal{\omega}$, whose generators are essentially iterated
faces (i.e. sources or targets) of $\omega$ (hypothesis \condition{PR1} below).
Moreover, $\polyreal{\omega}$ will be ``maximally unfolded'' (or ``free''), in
that two (iterated) faces that are the same opetope, but located at different
addresses, will correspond to distinct generators.

The rest of this subsection is devoted to inductively define the realization
functor $\polyreal{-} : \bbOO \longrightarrow \PolMTO$ together with its
\emph{boundary} $\partial \polyreal{-}$. We bootstrap the process with
\cref{def:polygraphic-realization-low-dimension} and state our induction
hypotheses in \ref{ass:polygraphic-realization}.

\begin{definition}
    [Low dimensional cases]
    \label{def:polygraphic-realization-low-dimension}
    For $\optZero$ the unique $0$-opetope, let $\partial \polyreal{\optZero}$
    be the empty polygraph, and $\polyreal{\optZero}$ be the polygraph with a
    unique generator in dimension $0$, which we denote by $\optZero$. For
    $\optOne$ the unique $1$-opetope, let $\partial \polyreal{\optOne} \eqdef
    \polyreal{\optZero} + \polyreal{\optZero}$, and let $\polyreal{\optOne}$ be
    induced by the cellular extension
    \[
        \partial \polyreal{\optOne} \xot{\src, \tgt} \{ \optOne \} ,
    \]
    where $\src$ and $\tgt$ map $\optOne$ to distinct $0$-generators. There are
    obvious functors $\polyreal{\src_{[]}}, \polyreal{\tgt} :
    \polyreal{\optZero} \longrightarrow \polyreal{\optOne}$, mapping $\optZero$
    to $\src \optOne$ and $\tgt \optOne$, respectively.
\end{definition}

\begin{definition} [Dimension $2$]
    For the reader's convenience, we construct $\partial \polyreal{\optInt{k}}$
    and $\polyreal{\optInt{k}}$ for every opetopic integer $\optInt{k}$,
    although this case already falls under the inductive definition (see
    \cref{def:polygraphic-realization:inductive-boundary,def:polygraphic-realization:inductive}).
    \begin{enumerate}
        \item Let $\partial \polyreal{\optInt{0}}$ be the $1$-polygraph given
        by the  following coequalizer:
        \[
            \coeqdiagram
                {\polyreal{\optZero}}{\polyreal{\optOne}}{\partial \polyreal{\optInt{0}}.}
                {\src_{[]}}{\tgt}{}
        \]
        In other words, $\partial \polyreal{\optInt{0}}$ has one object $x$ and
        one generating endomorphism $f : x \longrightarrow x$. The
        $2$-polygraph $\polyreal{\optInt{0}}$ is obtained by adjoining a
        generating $2$-cell $\alpha : \id_x \longrightarrow f$ to $\partial
        \polyreal{\optInt{0}}$.
        \item Let $k \geq 1$, and consider the $1$-polygraph
        \[
            \catPP \speqdef \left(
                \polyreal{\optOne}
                \coprod_{\polyreal{\optZero}} \polyreal{\optOne}
                \coprod_{\polyreal{\optZero}} \polyreal{\optOne}
                \coprod_{\polyreal{\optZero}} \cdots
                \coprod_{\polyreal{\optZero}} \polyreal{\optOne}
            \right) ,
        \]
        where there are $k$ instances of $\polyreal{\optOne}$. In other words,
        $\catPP$ is generated by a chain of $k$ composable $1$-cells, which we
        denote by
        \[
            x_0 \xto{f_1} x_1 \xto{f_2} x_2 \xto{f_3} \cdots
            \xto{f_k} x_k .
        \]
        Alternatively, $\catPP$ is the polyplex $\underline{l_k}$ of
        \cref{ex:polyplex}. Let $\partial \polyreal{\optInt{k}}$ be the obvious
        pushout
        \[
            \diagramsize{2}{3}
            \pushoutdiagram
                {\partial \polyreal{\optOne}}{\catPP}{\polyreal{\optOne}}
                    {\partial \polyreal{\optInt{k}},}
                {(x_0, x_k)}{}{}{}
        \]
        i.e., $\partial \polyreal{\optInt{k}}$ is $\catPP$ with an additional
        generating $1$-cell $g : x_0 \longrightarrow x_k$. Finally,
        $\polyreal{\optInt{k}}$ is obtained from $\partial
        \polyreal{\optInt{k}}$ by adjoining a generating $2$-cell $\alpha : f_k
        \cdots f_2 f_1 \longrightarrow g$.
    \end{enumerate}
    In both cases, there is a bijective correspondence between the generating
    cells of $\polyreal{\optInt{k}}$ and the objects of $\bbOO / \optInt{k}$,
    and the obvious inclusions induce functors
    \[
        \partial \polyreal{-}, \polyreal{-} : \bbOO_{\leq 2}
            \longrightarrow \PolMTO .
    \]
\end{definition}

Let $n \geq 2$ and assume by induction that $\partial \polyreal{-}$ and
$\polyreal{-}$ are defined on $\bbOO_{< n}$. Assume further that the following
induction hypotheses hold (they are easily verified for $n = 2$).

\begin{assumptions}
    \label{ass:polygraphic-realization}
    For all $\psi \in \bbOO_k$ with $k < n$, the following hold:
    \begin{itemize}
        \item \condition{PR1} for all $j \in
        \bbNN$, the set $\polyreal{\psi}_j$ of $j$-generators of
        $\polyreal{\psi}$ is in bijection with the set of objects of the slice
        $\bbOO_j / \psi$, i.e. of the form $\left( \phi \xto{\sfA} \psi
        \right)$ for $\phi \in \bbOO_j$ and $\sfA : \phi \longrightarrow \psi$
        a morphism in $\bbOO$;
        \item \condition{PR2} for $\left( \phi
        \xto{\sfA} \psi \right)$ a generator of $\polyreal{\psi}$, its target
        is $\left( \tgt \phi \xto{\tgt} \phi \xto{\sfA} \psi \right)$;
        \item \condition{PR3} for $l \leq k$, and
        for $\left( \phi \xto{\sfA} \psi \right)$ a $l$-generator of
        $\polyreal{\psi}$, the composition tree of its source $\ct \src
        \left(\phi \xto{\sfA} \psi \right) \in \tree \nabla_{l - 1}
        \polyreal{\psi}$ is
        \begin{align*}
            \underlyingtree{\phi} &\longrightarrow \nabla_{l - 1} \polyreal{\psi} \\
            [p] &\longmapsto \left(
                \src_{[p]} \phi \xto{\src_{[p]}} \phi \xto{\sfA} \psi
            \right) .
        \end{align*}
        Recall that by \cref{prop:composition-tree-duality}, this completely
        determines $\src \left(\phi \xto{\sfA} \psi \right) \in
        \polyreal{\psi}_{l-1}^*$.
    \end{itemize}
\end{assumptions}

We now define $\partial \polyreal{\omega}$ and $\polyreal{\omega}$ when $\omega
\in \bbOO_n$. Defining the former is easy, and done in
\cref{def:polygraphic-realization:inductive-boundary}. The latter is defined in
\cref{def:polygraphic-realization:inductive} as generated by a cellular
extension
\[
    \partial \polyreal{\omega} \xot{\src, \tgt} \left\{ \omega \right\}
\]
of $\partial \polyreal{\omega}$, where the target and source of the new
generator are given by \condition{PR2} and \condition{PR3}. Lastly, we check
the inductive hypotheses in \cref{prop:polygraphic-realization:inductive}.

\begin{definition}
    [Inductive step for $\partial \polyreal{-}$]
    \label{def:polygraphic-realization:inductive-boundary}
    For $\omega \in \bbOO_n$, let $\partial \polyreal{\omega}$ be the following
    many-to-one $(n-1)$-polygraph:
    \[
        \partial \polyreal{\omega}
        \speqdef
        \colim_{\substack{\sfA : \psi \rightarrow \omega \\ \dim \psi < n}}
            \polyreal{\psi} .
    \]
    For $\sfA : \psi \longrightarrow \omega$ in $\bbOO_{< n} / \omega$, this
    colimit comes with a corresponding coprojection $\polyreal{\sfA} :
    \polyreal{\psi} \longhookrightarrow \partial \polyreal{\omega}$.
\end{definition}

\begin{remark}
    \label{rem:polygraphic-realization:inductive-boundary}
    Let $0 \leq k < n$. By \condition{PR1}, the set of $k$-generators of
    $\partial \polyreal{\omega}$ is $\bbOO_k / \omega$.
\end{remark}

\begin{lemma}
    \label{lemma:polygraphic-realization:generators-of-boundary}
    For $\omega \in \bbOO_n$, and $j < n$, the set $\partial
    \polyreal{\omega}_j$ of $j$-generators of $\partial \polyreal{\omega}$ is
    the slice $\bbOO_j / \omega$.
\end{lemma}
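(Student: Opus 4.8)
The plan is to compute the colimit $\partial|\omega| = \colim_{\psi \in \bbOO_{<N}/\omega} |\psi|$ degree by degree, using the fact from \cref{th:structure-of-mto}(2) that in $\PolMTO$ colimits are computed pointwise on generators: $(\partial|\omega|)_j \cong \colim_{\psi \in \bbOO_{<N}/\omega} |\psi|_j$. By \condition{PR1} in \cref{axiom:polygraphic-realization}, for each $\psi \in \bbOO_{<N}$ and each $j$ we have $|\psi|_j = \bbOO_j/\psi$, so the indexing diagram is $\psi \mapsto \bbOO_j/\psi$, a functor $\bbOO_{<N}/\omega \longrightarrow \Set$. So the statement reduces to the purely combinatorial claim
\[
	\colim_{\psi \in \bbOO_{<N}/\omega} \left( \bbOO_j / \psi \right) \cong \bbOO_j / \omega
	\qquad (j < N).
\]

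First I would recognize the left-hand side as a colimit of representables: writing $h_\psi = \bbOO_{<N}(-,\psi)$ for the functor $\bbOO_{<N}^{\op} \to \Set$ and noting $\bbOO_j/\psi$ is the value at a fixed $j$-indexed family — more precisely, $\bbOO_j/\psi = \coprod_{\phi \in \bbOO_j} \bbOO(\phi,\psi)$ — the colimit over the slice $\bbOO_{<N}/\omega$ of the representables $\bbOO_{<N}(-,\psi)$ is, by the standard colimit formula (every presheaf is the colimit of representables indexed by its category of elements), just $\bbOO_{<N}(-,\omega)$; evaluating at each $\phi \in \bbOO_j$ with $j < N$ and summing gives $\coprod_{\phi\in\bbOO_j}\bbOO(\phi,\omega) = \bbOO_j/\omega$. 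Concretely, the cocone maps $\bbOO_j/\psi \to \bbOO_j/\omega$ are post-composition with the structure map $\sfA : \psi \to \omega$, and one checks directly that this family is universal: given any cocone $(c_\psi : \bbOO_j/\psi \to C)$ compatible with the slice maps, the element $\mathrm{id}_\omega$ is not available (since $\omega \notin \bbOO_{<N}$), so instead one uses that any $(\phi \xrightarrow{g} \omega) \in \bbOO_j/\omega$ factors as the image of $\mathrm{id}_\phi \in \bbOO_j/\phi$ under the slice object $(\phi \xrightarrow{g} \omega)$ itself, which lies in $\bbOO_{<N}/\omega$ because $j < N$; this forces the mediating map to send $g$ to $c_{(\phi,g)}(\mathrm{id}_\phi)$, and compatibility shows this is well-defined and the unique choice.

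The main obstacle is bookkeeping the functoriality and the ``cofinality'' argument cleanly: one must verify that the forgetful functor from the category of elements of the presheaf $\phi \mapsto \bbOO_j/\omega$ (restricted appropriately) down to $\bbOO_{<N}/\omega$ is cofinal, or equivalently that the slice category $\bbOO_{<N}/\omega$ already ``contains'' every $j$-face of $\omega$ as an object $(\phi \xrightarrow{g}\omega)$ with $\phi \in \bbOO_j \subseteq \bbOO_{<N}$. Since $j < N$ this containment is immediate, and the rest is a routine check that identities and composites match up. A minor subtlety worth a sentence is that this argument also requires the coprojections $|\sfA| : |\psi| \hookrightarrow \partial|\omega|$ from \cref{def:polygraphic-realization:inductive-boundary} to restrict on $j$-generators to the post-composition map described above — which is exactly what the pointwise-colimit description of \cref{th:structure-of-mto}(2) delivers, since the cocone into the colimit is by construction the componentwise one. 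Having assembled these pieces, the isomorphism $(\partial|\omega|)_j \cong \bbOO_j/\omega$ follows, which is the assertion.
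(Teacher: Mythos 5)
Your argument is correct and takes essentially the same route as the paper: the paper's proof is the single line ``follows from \condition{PR1} and \cref{th:structure-of-mto},'' and you have simply unpacked that line, using \cref{th:structure-of-mto} to compute the colimit pointwise on $j$-generators, \condition{PR1} to identify $|\psi|_j$ with $\bbOO_j/\psi$, and the key observation that $j<N$ makes every $(\phi\xrightarrow{g}\omega)$ itself an object of $\bbOO_{<N}/\omega$, so that the cocone legs postcomposing with the slice structure maps are jointly universal.
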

\begin{proof}
    Follows from the induction hypothesis \condition{PR1} and
    \cref{prop:polygraphs-cocomplete}.
\end{proof}

\begin{corollary}
    \label{lemma:polygraphic-realization:nabla-boundary}
    For $\omega \in \bbOO_n$ and $1 \leq k < n$, the polynomial functor
    $\nabla_k \partial \polyreal{\omega}$ is described as follows:
    \[
        \polynomialfunctor
            {\bbOO_{k-1} / \omega}{E}{\bbOO_k / \omega}
                {\bbOO_{k-1} / \omega}
            {\src}{p}{\tgt}
    \]
    where for $\left( \psi \xto{\sfA} \omega \right) \in \bbOO_k / \omega$,
    \begin{enumerate}
        \item the fiber $E \left( \psi \xto{\sfA} \omega \right)$ is simply
        $\psi^\nodesymbol$;
        \item for $[p] \in E \left( \psi \xto{\sfA} \omega \right) \cong
        \psi^\nodesymbol$, we have $\src [p] = \left( \src_{[p]} \psi
        \xto{\src_{[p]}} \psi \xto{\sfA} \omega \right)$;
        \item $\tgt \left( \psi \xto{\sfA} \omega \right) = \left( \tgt \psi
        \xto{\tgt} \psi \xto{\sfA} \omega \right)$.
    \end{enumerate}
\end{corollary}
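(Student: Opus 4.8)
The plan is to unfold \cref{def:nabla-construction} for the polygraph $P = \partial|\omega|$ in dimension $k$ and to identify, one by one, the object-sets and structure maps of the resulting polynomial functor, using \cref{lemma:polygraphic-realization:generators-of-boundary} together with the induction hypotheses \condition{PR2} and \condition{PR3}, transported along the colimit coprojections.

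Since $k - 1 < k < N$, \cref{lemma:polygraphic-realization:generators-of-boundary} identifies $\partial|\omega|_{k-1}$ with $\bbOO_{k-1}/\omega$ and $\partial|\omega|_k$ with $\bbOO_k/\omega$, so the two outer copies of $\bbOO_{k-1}/\omega$ and the middle $\bbOO_k/\omega$ of the claimed polynomial functor are accounted for; by \cref{def:nabla-construction} the map $p$ is then the evident projection sending a context over $\src x$ to $x$, so it remains to pin down $\tgt$ and the pair $(E, \src)$. Recall that $\partial|\omega| = \colim_{\psi \in \bbOO_{<N}/\omega}|\psi|$, that each coprojection $|\sfA| : |\psi| \hookrightarrow \partial|\omega|$ is a morphism of many-to-one polygraphs, and that under \condition{PR1} and \cref{lemma:polygraphic-realization:generators-of-boundary} it acts on $j$-generators (for $j < N$) by postcomposition with $\sfA$. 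In particular a $k$-generator $\left(\psi \xrightarrow{\sfA} \omega\right)$ of $\partial|\omega|$ is the image under $|\sfA|$ of the tautological generator $\left(\psi \xrightarrow{\id_\psi} \psi\right)$ of $|\psi|$. Applying \condition{PR2} inside $|\psi|$ and pushing forward along $|\sfA|$, which, being a polygraph morphism, commutes with $\tgt$, gives $\tgt\left(\psi \xrightarrow{\sfA} \omega\right) = \left(\tgt\psi \xrightarrow{\tgt} \psi \xrightarrow{\sfA} \omega\right)$, which is claim (3).

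For the fiber $E(x)$ over $x = \left(\psi \xrightarrow{\sfA} \omega\right)$ and the map $\src$, \cref{def:nabla-construction} identifies $E(x)$ with the set of irreducible contexts over the $(k-1)$-cell $\src x$, with $\src$ returning the source generator of such a context. By \condition{PR3} for $|\psi|$, the composition tree $\ct\src\left(\psi \xrightarrow{\id_\psi} \psi\right)$ has underlying tree $\underlyingtree{\psi}$, with the node at address $[p]$ decorated by the $(k-1)$-generator $\left(\src_{[p]}\psi \xrightarrow{\src_{[p]}} \psi\right)$. Pushing this forward along $|\sfA|$ — using that $|\sfA|$ induces a morphism $\nabla_{k-1}|\sfA|$ (\cref{prop:nabla-construction-induced-morphism}) and that the composition-tree construction (\cref{prop:composition-tree-duality}) is natural — shows that $\ct\src x$ again has underlying tree $\underlyingtree{\psi}$, with the node at $[p]$ now decorated by $\left(\src_{[p]}\psi \xrightarrow{\src_{[p]}} \psi \xrightarrow{\sfA} \omega\right)$. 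For $k \geq 2$, \cref{coroll:irreducible-context-tree} identifies $E(x)$ with $\sum_a (\tree\nabla_{k-1}\partial|\omega|)(\ytree{a}, \ct\src x)$, and a morphism $\ytree{a} \to \ct\src x$ is precisely a node address of $\ct\src x$ with matching decoration, i.e.\ an element of $\psi^\nodesymbol$; this yields $E(x) \cong \psi^\nodesymbol$ (claim (1)), and under this bijection the context indexed by $[p]$ has source generator the decoration of the node at $[p]$, namely $\left(\src_{[p]}\psi \xrightarrow{\src_{[p]}} \psi \xrightarrow{\sfA} \omega\right)$ (claim (2)). The case $k = 1$ is a direct check, the only irreducible context over a $0$-generator $a$ being $\ctxplace : a \to a$, so that $E(x)$ is the singleton $\optOne^\nodesymbol$ and $\src$ returns $\src x = \left(\src_{[]}\optOne \xrightarrow{\src_{[]}} \optOne \xrightarrow{\sfA} \omega\right)$; and the degenerate subcase ($\psi$ a degenerate $k$-opetope, $k \geq 2$) is consistent, since then $\psi^\nodesymbol = \emptyset$, $\ct\src x = \itree{a}$, and $\src x$ is an identity with no irreducible contexts.

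The step I expect to be the main obstacle is this transport along the coprojections $|\sfA| : |\psi| \hookrightarrow \partial|\omega|$: one has to check that presenting $\partial|\omega|$ as $\colim_{\psi \in \bbOO_{<N}/\omega}|\psi|$ is, on $k$- and $(k-1)$-generators, compatible with the postcomposition identifications of \cref{lemma:polygraphic-realization:generators-of-boundary}, and strictly compatible with the target map, with the source cells via \condition{PR3}, and hence with the induced morphisms of $\nabla$-functors and the composition-tree equivalence. This is bookkeeping rather than anything deep, but it is where the content lies; granting it, claims (1)--(3) are a direct reading-off of \cref{def:nabla-construction}.
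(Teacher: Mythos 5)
Your proposal is correct and follows the same approach as the paper's own proof, which is a terse one-liner invoking \cref{lemma:polygraphic-realization:generators-of-boundary} and \condition{PR1}, \condition{PR2}, \condition{PR3}. You have simply unfolded the bookkeeping behind that one-liner: identifying the generator sets via the lemma, and transporting the target and composition-tree data from $|\psi|$ to $\partial|\omega|$ along the colimit coprojections $|\sfA|$. The one spot you flag yourself, the compatibility of $\ct$ with a polygraph morphism, is indeed the implicit step; it is justified because morphisms of many-to-one polygraphs send generators to generators and preserve composition, so they carry the unique generator-decomposition of a cell (\cref{th:structure-of-mto}) to that of its image, hence commute with $\ct$ up to the induced map $\tr\nabla_{k-1}$. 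This matches the intended argument.
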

\begin{proof}
    Direct consequence of
    \cref{lemma:polygraphic-realization:generators-of-boundary} and
    \condition{PR1}, \condition{PR2}, and \condition{PR3}.
\end{proof}

\begin{definition}
    \label{def:polygraphic-realization:nabla-boundary:forgetful}
    For $\omega \in \bbOO_n$ and $1 \leq k < n$, we have a morphism $u :
    \nabla_k \partial \polyreal{\omega} \longrightarrow \optPolyFun^{k-1}$
    \[
        \begin{tikzcd}
            \bbOO_{k-1} / \omega
                \ar[d, "u_0"'] &
            E
                \ar[r, "p"]
                \ar[d, "u_2" left]
                \ar[l, "\src" above] &
            \bbOO_k / \omega
                \ar[r, "\tgt"]
                \ar[d, "u_1" left] &
            \bbOO_{k-1} / \omega
                \ar[d, "u_0" left] \\
            \bbOO_{k-1} &
            \bbOO_k^\nodesymbol
                \ar[r, "p"]
                \ar[l, "\src" above] &
            \bbOO_k
                \ar[r, "\tgt"] &
            \bbOO_{k-1}
        \end{tikzcd}
    \]
    induced by the forgetful maps $\bbOO_{k-1} / \omega \longrightarrow
    \bbOO_{k-1}$ and $\bbOO_k / \omega \longrightarrow \bbOO_k$.
\end{definition}

\begin{lemma}
    \label{lemma:polygraphic-realization:lift}
    Let $\omega \in \bbOO_n = \tree \optPolyFun^{n-2}$. The map $\omega :
    \underlyingtree{\omega} \longrightarrow \optPolyFun^{n-2}$ factors through
    $u : \nabla_{n-1} \partial \polyreal{\omega} \longrightarrow
    \optPolyFun^{n-2}$
    (\cref{def:polygraphic-realization:nabla-boundary:forgetful}):
    \[
        \triangleDRdiagram
            {\nabla_{n-1} \partial \polyreal{\omega}}{\underlyingtree{\omega}}
                {\optPolyFun^{n-2} .}
            {\baromega}{u}{\omega}
    \]
\end{lemma}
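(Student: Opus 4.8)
The plan is to exhibit the factorization dimension by dimension (the polynomial functor $\nabla_{N-1}\partial|\omega|$ has four constituent sets $\mathbb{O}_{N-2}/\omega$, $E$, $\mathbb{O}_{N-1}/\omega$, $\mathbb{O}_{N-2}/\omega$, and the underlying tree $\underlyingtree{\omega}$ has $\underlyingtree{\omega}_0$, $\underlyingtree{\omega}_2$, $\underlyingtree{\omega}_1$, $\underlyingtree{\omega}_0$), and then check that the resulting square is a morphism of polynomial functors — i.e. that all squares commute and the middle square is cartesian — and that composing with $u$ recovers $\omega$.

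First I would define $\baromega$ on nodes. A node of $\underlyingtree{\omega}$ at address $[p]\in\omega^\nodesymbol$ is decorated (via $\omega:\underlyingtree{\omega}\to\optPolyFun^{N-2}$) by the $(N-1)$-opetope $\src_{[p]}\omega$. I send that node to the generator $\left(\src_{[p]}\omega\xrightarrow{\src_{[p]}}\omega\right)\in\mathbb{O}_{N-1}/\omega = \nabla_{N-1}\partial|\omega|_1$, using that $\src_{[p]}:\src_{[p]}\omega\to\omega$ is a genuine source embedding in $\bbOO$ (Definition \ref{def:o}) and that by Lemma \ref{lemma:polygraphic-realization:generators-of-boundary} the $(N-1)$-generators of $\partial|\omega|$ are exactly $\mathbb{O}_{N-1}/\omega$. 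On edges, an edge of $\underlyingtree{\omega}$ has address $[a]$ which is either $[]$ or of the form $[p[q]]$; it is decorated by an $(N-2)$-opetope, namely $\edg_{[a]}\omega$. I send the root edge $[]$ to $\left(\tgt\src_{[]}\omega\xrightarrow{}\omega\right)$ (i.e. $\tgt\src_{[]}\omega=\edg_{[]}\omega$ together with the composite target-then-source embedding) and send an inner edge $[p[q]]$ to $\left(\src_{[q]}\src_{[p]}\omega\to\omega\right)$; leaf edges $[p[q]]\in\omega^\leafsymbol$ go to $\left(\src_{\readdress_\omega[p[q]]}\tgt\omega\to\omega\right)$. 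Compatibility of these choices — i.e. well-definedness of $\baromega$ on edges — is exactly the content of the opetopic identities (Theorem \ref{th:opetopic-identities}): the \condition{Inner}, \condition{Glob1}, \condition{Glob2} relations force the decorating opetope and the chosen composite embedding into $\omega$ to agree no matter which node the edge is viewed from.

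Next I would verify that $(\baromega, u_?\circ\baromega) $ assembles into a morphism of polynomial functors $\underlyingtree{\omega}\to\nabla_{N-1}\partial|\omega|$. Commutativity of the source, middle-projection, and target squares is a direct bookkeeping check against the explicit description in Corollary \ref{lemma:polygraphic-realization:nabla-boundary}: the source map of $\nabla_{N-1}\partial|\omega|$ sends the generator $\left(\src_{[p]}\omega\to\omega\right)$ to the family indexed by $(\src_{[p]}\omega)^\nodesymbol$, which matches the input edges of the node $[p]$ of $\underlyingtree{\omega}$ via the nested-address identification of Remark \ref{rem:nested-addresses}; the target map sends it to $\left(\tgt\src_{[p]}\omega\to\omega\right)$, matching the output edge $\edg_{[p]}\omega=\tgt\src_{[p]}\omega$. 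For the middle square to be cartesian, I must check that the input edges of a node of $\underlyingtree{\omega}$ are in bijection (over $\mathbb{O}_{N-2}/\omega$, equivalently over $\bbOO_{N-2}$ after composing with $u$) with the fiber $E\left(\src_{[p]}\omega\to\omega\right)=(\src_{[p]}\omega)^\nodesymbol$; this is precisely the defining property of a $\optPolyFun^{N-3}$-tree together with Remark \ref{rem:nested-addresses}(1), which says the input edges of the node decorated by $\src_{[p]}\omega$ are indexed by $(\src_{[p]}\omega)^\nodesymbol$. Finally, composing $\baromega$ with $u$ forgets the map-to-$\omega$ component of each generator and returns the bare $(N-1)$- and $(N-2)$-opetope decorations, which is by construction the original $\omega:\underlyingtree{\omega}\to\optPolyFun^{N-2}$.

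The main obstacle I expect is the well-definedness of $\baromega$ on edges, i.e. checking that the two (or more) descriptions of an inner edge — one coming from the target of the node above it, one from a source of the node below it — really do name the same generator of $\partial|\omega|$, not merely the same underlying opetope. This is where one genuinely uses that $\partial|\omega|$ is ``maximally unfolded'': a priori an edge opetope occurring at two different addresses could be two distinct generators, and it is the relations \condition{Inner}/\condition{Glob1}/\condition{Glob2} built into $\bbOO$ (Definition \ref{def:o}), transported through the identification of $\partial|\omega|_{N-2}$ with $\bbOO_{N-2}/\omega$, that guarantee the relevant composite face embeddings into $\omega$ coincide. A secondary, more routine point is carefully matching the leaf/node address conventions (Lemma \ref{lemma:grafting-addresses}, Remark \ref{rem:nested-addresses}) so that the cartesianness square is verified on the nose rather than merely up to reindexing.
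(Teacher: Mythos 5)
Your proposal is correct and takes the same approach as the paper: define $\baromega$ by sending a node at address $[p]$ to $\bigl(\src_{[p]}\omega \xrightarrow{\src_{[p]}} \omega\bigr)$ and an edge at address $[l]$ to $\bigl(\edg_{[l]}\omega \xrightarrow{\edg_{[l]}} \omega\bigr)$, with the opetopic identities guaranteeing consistency. The paper's proof consists of only this one sentence defining $\baromega$, leaving all the verifications you spell out (commutativity of the squares, cartesianness of the middle square via \cref{rem:nested-addresses}, recovery of $\omega$ after composing with $u$, and the degenerate case where $\edg_{[]}\omega$ must be read directly rather than as $\tgt\src_{[]}\omega$) implicit, so your write-up is simply a more explicit version of the same argument.
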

\begin{proof}
    Let $\baromega$ map a node $[p] \in \omega^\nodesymbol$ to the cell
    $\left(\src_{[p]} \omega \xto{\src_{[p]}} \omega \right) \in \bbOO_{n-1} /
    \omega$, and map an edge $[l]$ to the cell $\left( \edg_{[l]} \omega
    \xto{\edg_{[l]}} \omega \right) \in \bbOO_{n-2} / \omega$ (recall the
    notation from \cref{not:edg,def:o}).
\end{proof}

\begin{proposition}
    \label{prop:polygraphic-realization:parallel}
    On the one hand, consider the tree $\nabla_{n-1} \partial
    \polyreal{\omega}$-tree $\baromega$ of
    \cref{lemma:polygraphic-realization:lift}, and on the other hand, recall
    from \cref{rem:polygraphic-realization:inductive-boundary} that there is a
    $(n-1)$-generator $\left( \tgt \omega \xto{\tgt} \omega \right)$ of
    $\partial \polyreal{\omega}$ corresponding to the target embedding of
    $\omega$. Then, in $\partial \polyreal{\omega}$, the composite
    $\baromega^\circ$ (\cref{def:composition}) and the generator $\left( \tgt
    \omega \xto{\tgt} \omega \right)$ are parallel.
\end{proposition}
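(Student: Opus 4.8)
The goal is to show that the $(N-1)$-cells $\baromega^\circ$ and $\left(\tgt\omega\xrightarrow{\tgt}\omega\right)$ of $\partial|\omega|$ are parallel, i.e.\ have equal sources and equal targets; I would compute each side. First, the degenerate case: if $\omega=\itree{\phi}$ has no node then $\baromega=\itree{(\phi\to\omega)}$, so $\baromega^\circ=\id_{(\phi\to\omega)}$ is an identity cell whose source and target are the $(N-2)$-generator $(\phi\to\omega)$; since here $\tgt\omega=\ytree{\phi}$, routing the induction hypotheses \condition{PR2} and \condition{PR3} for $|\tgt\omega|$ (legitimate, as $\dim\tgt\omega=N-1<N$) through the coprojection $|\tgt|\colon|\tgt\omega|\longhookrightarrow\partial|\omega|$ and applying the relation \condition{Degen} identifies both $\src$ and $\tgt$ of $\left(\tgt\omega\xrightarrow{\tgt}\omega\right)$ with that same generator. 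So assume henceforth that $\omega$ is non degenerate.

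For the targets: by \cref{prop:polynomial-trees-are-graftings}, $\baromega^\circ$ is obtained from the corolla at the root node $\ytree{\left(\src_{[]}\omega\xrightarrow{\src_{[]}}\omega\right)}$ by iterated graftings onto leaves, and partial composition preserves targets (\cref{lemma:partial-composition}); hence $\tgt\baromega^\circ=\tgt\left(\src_{[]}\omega\xrightarrow{\src_{[]}}\omega\right)$. Using \condition{PR2} (routed through $|\src_{[]}|$), this is $\left(\tgt\src_{[]}\omega\xrightarrow{\tgt}\src_{[]}\omega\xrightarrow{\src_{[]}}\omega\right)$, which by \cref{th:opetopic-identities} and the relation \condition{Glob1} rewrites as $\left(\tgt\tgt\omega\xrightarrow{\tgt}\tgt\omega\xrightarrow{\tgt}\omega\right)$, and this, again by \condition{PR2} (routed through $|\tgt|$), is $\tgt\left(\tgt\omega\xrightarrow{\tgt}\omega\right)$.

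For the sources, I compare the two composition trees in $\tree\nabla_{N-2}\partial|\omega|$ (equivalent to comparing the sources, by \cref{prop:composition-tree-duality}). First, $\left(\tgt\omega\xrightarrow{\tgt}\omega\right)$ is the image under $|\tgt|$ of the top generator $\left(\tgt\omega\xrightarrow{\id}\tgt\omega\right)$ of $|\tgt\omega|$, so \condition{PR3} for $|\tgt\omega|$, pushed along $|\tgt|$ (which preserves sources and is compatible with $\ct$ via $\nabla_{N-2}|\tgt|$, cf.\ \cref{prop:nabla-construction-induced-morphism}), gives that $\ct\src\left(\tgt\omega\xrightarrow{\tgt}\omega\right)$ is the tree $\underlyingtree{\tgt\omega}\to\nabla_{N-2}\partial|\omega|$, $[m]\mapsto\left(\src_{[m]}\tgt\omega\xrightarrow{\src_{[m]}}\tgt\omega\xrightarrow{\tgt}\omega\right)$. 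Next I compute $\ct\src\baromega^\circ$ along the construction of $\baromega^\circ$: the source of the root corolla is (by \condition{PR3} routed through $|\src_{[]}|$) a composite, shaped as $\underlyingtree{\src_{[]}\omega}$, of the generators $\left(\src_{[q]}\src_{[]}\omega\xrightarrow{\src_{[q]}}\src_{[]}\omega\xrightarrow{\src_{[]}}\omega\right)$; and grafting on the corolla at an inner node $[p[q]]\in\omega^\nodesymbol$ replaces, via \cref{lemma:partial-composition} and the relation \condition{Inner} (which equates $\tgt\left(\src_{[p[q]]}\omega\xrightarrow{\src_{[p[q]]}}\omega\right)$ with the generator $\left(\src_{[q]}\src_{[p]}\omega\xrightarrow{\src_{[q]}}\src_{[p]}\omega\xrightarrow{\src_{[p]}}\omega\right)$ sitting at the inner edge $[p[q]]$), that generator by $\src\left(\src_{[p[q]]}\omega\xrightarrow{\src_{[p[q]]}}\omega\right)$. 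Iterating over all inner nodes, the generators surviving in $\src\baromega^\circ$ are exactly the leaf-sources $\left(\src_{[q]}\src_{[p]}\omega\xrightarrow{\src_{[q]}}\src_{[p]}\omega\xrightarrow{\src_{[p]}}\omega\right)$ for $[p[q]]\in\omega^\leafsymbol$, each with multiplicity one, and by the relation \condition{Glob2} each of these equals $\left(\src_{\readdress_\omega[p[q]]}\tgt\omega\xrightarrow{\src_{\readdress_\omega[p[q]]}}\tgt\omega\xrightarrow{\tgt}\omega\right)$; so through the bijection $\readdress_\omega\colon\omega^\leafsymbol\cong(\tgt\omega)^\nodesymbol$ they are precisely the generators appearing in $\ct\src\left(\tgt\omega\xrightarrow{\tgt}\omega\right)$.

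The remaining step --- which I expect to be the main obstacle --- is to promote this matching of node labels to an isomorphism $\ct\src\baromega^\circ\cong\underlyingtree{\tgt\omega}$ over $\nabla_{N-2}\partial|\omega|$, necessarily unique since $\nabla_{N-2}\partial|\omega|$-trees are rigid. Concretely one must track, at each grafting step building $\baromega^\circ$, how the bijection between the leaves of the partial composite and the ``nodes of the partial target'' evolves, and verify that in the limit it is governed by $\readdress_\omega$ and yields exactly the tree shape $\underlyingtree{\tgt\omega}$; this is where the coherence of the readdressing map --- its compatibility with grafting, built into the polynomial-monad structure of $\optPolyFun^{N-2}$ (\cref{def:readdressing}, \cref{lemma:grafting-addresses}) --- is genuinely needed. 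The low-dimensional base case $N=2$, where $\tgt\omega=\optOne$ and every tree in sight is linear, can be checked by direct inspection.
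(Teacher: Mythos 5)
Your degenerate case and your target computation are correct and follow the same lines as the paper's proof. The source argument, however, has a genuine gap, and you flag it yourself. You correctly establish, via \condition{Glob2} and the bijection $\readdress_\omega$, that the $(N-2)$-generators decorating $\ct\src\baromega^\circ$ and $\ct\src\left(\tgt\omega\xrightarrow{\tgt}\omega\right)$ coincide. But the claim you leave open --- that this matching of node labels assembles into an isomorphism $\ct\src\baromega^\circ\cong\underlyingtree{\tgt\omega}$ over $\nabla_{N-2}\partial|\omega|$ --- is the actual content of the statement, not deferrable bookkeeping. Having the same multiset of generators does not by itself give equality of the composite cells; one needs to know that the tree shapes agree.

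The paper takes a different and shorter route that avoids the recursive tree-matching you were attempting. After noting (as you do) that both sources are composites of the same $(N-2)$-generators, it shows there is only \emph{one} way to compose them, by proving that no two of the generators $\left(\phi\xrightarrow{\src_{[q]}}\psi\xrightarrow{\src_{[p]}}\omega\right)$, for $[p[q]]\in\omega^\leafsymbol$, can share a target. The argument is a diagram chase: if two did, factoring through the corresponding \condition{Glob2} squares and using that $\tgt:\tgt\omega\longrightarrow\omega$ is a monomorphism (here non-degeneracy of $\omega$ is used) forces $\readdress_\omega[p_1[q_1]]=\readdress_\omega[p_2[q_2]]$, hence equality of addresses. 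Once the generators have pairwise-distinct targets, and the overall target agrees (your target computation), the composition tree is forced, since the tree is determined by matching each generator's target to the source slots available. This uniqueness argument sidesteps exactly the $\readdress$-coherence tracking that you correctly identify as the ``main obstacle''. To complete your proof you would need to actually carry out that tracking (showing at each grafting step that the leaf-to-target bijection is governed by $\readdress$), or switch to the paper's uniqueness-of-composition argument.
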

\begin{proof}
    If $\omega$ is degenerate, say $\omega = \itree{\phi}$ for some $\phi \in
    \bbOO_{n-2}$, then $\composition{\baromega} = \id_{( \phi \xto{\tgt \tgt}
    \omega )}$, while $\left( \tgt \omega
    \xto{\tgt} \omega \right) = \left( \ytree{\phi}
    \xto{\tgt} \omega \right)$. By \condition{Degen}, those two cells are
    parallel.

    For the rest of the proof, we assume that $\omega$ is not degenerate.
    First, we have
    \[
        \tgt \composition{\baromega}
        \speq \tgt \src_{[]} \baromega
        \speq \tgt \left( \tgt \src_{[]} \omega
            \xto{\tgt \src_{[]}} \omega \right)
        \speq \left( \tgt \tgt \omega \xto{\tgt \tgt} \omega \right)
        \speq \tgt \left( \tgt \omega \xto{\tgt} \omega \right) .
    \]
    Then, in order to show that $\src \composition{\baromega} = \src (\tgt
    \omega \xto{\tgt} \omega)$, we show that the $(n-2)$-generators occurring
    on both sides are the same, and that the way to compose them is unique.
    \begin{enumerate}
        \item Generators in $\src \composition{\baromega}$ are of the form
        $\left( \phi \xto{[q]} \psi \xto{[p]} \omega \right)$, for $[p [q]] \in
        \omega^\leafsymbol$. By \condition{Glob2}, those are equal to
        $\left(\phi \xto{\readdress_\omega [p[q]]} \tgt \omega \xto{\tgt}
        \omega \right)$, which are exactly the generators in the cell $\src
        \left( \tgt \omega \xto{\tgt} \omega \right)$.

        \item To show that there is a unique way to compose all the
        $(n-2)$-generators of the form $(\phi \xto{\src_{[q]}} \psi
        \xto{\src_{[p]}} \omega)$, where $[p[q]]$ ranges over
        $\omega^\leafsymbol$, it is enough to show that no two have the same
        target. Assume $(\phi_i \xto{\src_{[q_i]}} \psi_i \xto{\src_{[p_i]}}
        \omega)$, with $i = 1, 2$, are $(n-2)$-generators occuring in $\src
        \composition{\baromega}$ with the same target. Consider the following
        diagram:
        \[
            \begin{tikzcd} [column sep = large]
                &
                \phi_1
                    \ar[r, "\src_{[q_1]}"]
                    \ar[dr, "\src_{[r_1]}"'] &
                \psi_1
                    \ar[dr, "\src_{[p_1]}"] \\
                \rho
                    \ar[ur, "\tgt"]
                    \ar[dr, "\tgt"'] &
                &
                \tgt \omega
                    \ar[r, "\tgt"] &
                \omega \\
                &
                \phi_2
                    \ar[r, "\src_{[q_2]}"]
                    \ar[ur, "\src_{[r_2]}"] &
                \psi_2
                    \ar[ur, "\src_{[p_2]}"']
            \end{tikzcd}
        \]
        where $[r_i] \eqdef \readdress_\omega [p_i [q_i]] \in \tgt
        \omega^\nodesymbol$. The outer hexagon commutes by assumption, the two
        squares on the right are instances of \condition{Glob2}, and the left
        square commutes as $\tgt : \tgt \omega \longrightarrow \omega$ is a
        monomorphism, since $\omega$ is non degenerate. By inspection of the
        opetopic identities (see \cref{def:o}), the only way for the left
        square to commute is the trivial way, i.e. $[r_1] = [r_2]$. Since
        $\readdress_\omega$ is a bijection, we have $[p_1 [q_1]] = [p_2
        [q_2]]$, thus $[p_1] = [p_2]$ and $[q_1] = [q_2]$.
        \qedhere
    \end{enumerate}
\end{proof}

\begin{definition}
    [Inductive step for $\polyreal{-}$]
    \label{def:polygraphic-realization:inductive}
    For $\omega \in \bbOO_n$, let $\polyreal{\omega}$ be the cellular extension
    \[
        \partial \polyreal{\omega} \xot{\src, \tgt} \left\{ \omega \right\} ,
    \]
    where $\tgt$ maps $\omega$ to the $(n-1)$-generator $\left( \tgt \omega
    \xto{\tgt} \omega \right)$, and where the composition tree of $\src \omega$
    is $\baromega$ (\cref{lemma:polygraphic-realization:lift}). For
    consistency, we also write $\left( \omega \xto{\id} \omega \right)$ for the
    unique $n$-generator of $\polyreal{\omega}$. This is well-defined by
    \cref{prop:polygraphic-realization:parallel}, and gives a functor
    $\polyreal{-} : \bbOO_{\leq n} \longrightarrow \Pol$.
\end{definition}

\begin{proposition}
    \label{prop:polygraphic-realization:inductive}
    For $\omega \in \bbOO_n$, the polygraphs $\partial \polyreal{\omega}$
    (\cref{def:polygraphic-realization:inductive-boundary}) and
    $\polyreal{\omega}$ (\cref{def:polygraphic-realization:inductive}) satisfy
    the \cref{ass:polygraphic-realization}.
\end{proposition}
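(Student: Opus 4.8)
The plan is to verify the four clauses \condition{PR0}--\condition{PR3} of \cref{axiom:polygraphic-realization} for the pair $\partial|\omega|$, $|\omega|$, separating the unique $N$-generator $\left(\omega \xrightarrow{\id}\omega\right)$ from the generators of dimension $<N$. The latter all lie in $\partial|\omega| = \colim_{\psi\in\bbOO_{<N}/\omega}|\psi|$, so for those I would transport the inductive hypothesis from the pieces $|\psi|$ along the colimit coprojections; for the $N$-generator the clauses hold by the very construction of \cref{def:polygraphic-realization:inductive}, once one knows that $\src\omega$ and $\tgt\omega$ are parallel, which is \cref{prop:polygraphic-realization:parallel}.

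\emph{Clauses \condition{PR0} and \condition{PR1}.} Since each $|\psi|$ is many-to-one by the inductive \condition{PR0} and $\PolMTO$ is cocomplete with colimits computed dimensionwise on generators (\cref{th:structure-of-mto}), the polygraph $\partial|\omega|$ is many-to-one, and by \cref{lemma:polygraphic-realization:generators-of-boundary} its set of $j$-generators is the slice $\bbOO_j/\omega$ for every $j<N$; in particular $\partial|\omega|$ has no generators of dimension $\geq N$. Now $|\omega|$ is obtained from $\partial|\omega|$ by adjoining a single $N$-generator whose target is the $(N-1)$-generator $\left(\tgt\omega\xrightarrow{\tgt}\omega\right)$ of $\partial|\omega|$; as the target of every generator of $|\omega|$ is then again a generator, $|\omega|$ is many-to-one, and $\partial|\omega|$ is precisely its sub-polygraph of generators of dimension $<N$, giving \condition{PR0}. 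For \condition{PR1}, in dimensions $j<N$ we have $|\omega|_j = \partial|\omega|_j = \bbOO_j/\omega$; in dimension $N$, $|\omega|_N$ is the singleton $\left\{\left(\omega\xrightarrow{\id}\omega\right)\right\}$ and $\bbOO_N/\omega = \{\id_\omega\}$ because every non-identity morphism of $\bbOO$ strictly lowers dimension (\cref{def:o}); and both sides are empty in dimensions $>N$.

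\emph{Clauses \condition{PR2} and \condition{PR3}.} For the generator $\left(\omega\xrightarrow{\id}\omega\right)$ both clauses hold by \cref{def:polygraphic-realization:inductive}: its target is declared to be $\left(\tgt\omega\xrightarrow{\tgt}\omega\right)$, and the composition tree of its source is $\baromega$, which \cref{lemma:polygraphic-realization:lift} identifies with the tree $[p]\mapsto\left(\src_{[p]}\omega\xrightarrow{\src_{[p]}}\omega\right)$ demanded by \condition{PR3} with $\phi=\omega$ and $f=\id$. For a generator $\left(\phi\xrightarrow{f}\omega\right)$ of dimension $l<N$, observe that it is the image of the top generator $\left(\phi\xrightarrow{\id}\phi\right)$ of $|\phi|$ under the colimit coprojection $|f|:|\phi|\longhookrightarrow\partial|\omega|$, which on generators is postcomposition with $f$, sending $\left(\chi\xrightarrow{g}\phi\right)$ to $\left(\chi\xrightarrow{g}\phi\xrightarrow{f}\omega\right)$. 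As $|f|$ is a morphism of polygraphs it commutes with $\tgt$, so the inductive \condition{PR2} for $|\phi|$ gives $\tgt\left(\phi\xrightarrow{f}\omega\right) = |f|\left(\tgt\phi\xrightarrow{\tgt}\phi\right) = \left(\tgt\phi\xrightarrow{\tgt}\phi\xrightarrow{f}\omega\right)$, which is \condition{PR2}. Likewise, since $\nabla_{l-1}$ is functorial (\cref{prop:nabla-construction-induced-morphism}) and $\ct$ is natural in morphisms of many-to-one polygraphs --- a routine check from its inductive definition together with \cref{prop:composition-tree-duality} --- the composition tree $\ct\src\left(\phi\xrightarrow{f}\omega\right)$ is the image under $\nabla_{l-1}|f|$ of $\ct\src\left(\phi\xrightarrow{\id}\phi\right)$, and applying the inductive \condition{PR3} for $|\phi|$ to the latter yields the tree $[p]\mapsto\left(\src_{[p]}\phi\xrightarrow{\src_{[p]}}\phi\xrightarrow{f}\omega\right)$, which is \condition{PR3}. (The targets and node decorations involved here are, alternatively, already recorded in \cref{lemma:polygraphic-realization:nabla-boundary}.)

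\emph{Main obstacle.} The one genuinely delicate point is the compatibility of the colimit defining $\partial|\omega|$ with its source, target, and composition-tree structure --- that is, that the generators of $\partial|\omega|$ and their sources and targets are computed from the pieces $|\psi|$ exactly as used above. This rests on \cref{th:structure-of-mto} (colimits of many-to-one polygraphs are dimensionwise on generators) together with the naturality of $\ct$. The remaining substantive content, the degenerate/non-degenerate case split for $\src\omega$ and the parallelism of $\src\omega$ with $\tgt\omega$, has already been discharged in \cref{lemma:polygraphic-realization:lift} and \cref{prop:polygraphic-realization:parallel}, so the present proposition is in the end mostly an assembly of earlier results.
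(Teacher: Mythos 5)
Your proof is correct and follows the same decomposition as the paper's own (very terse) proof: establish \condition{PR0} and \condition{PR1} from \cref{lemma:polygraphic-realization:generators-of-boundary} and the colimit construction, then verify \condition{PR2}/\condition{PR3} for the single new $N$-generator directly from \cref{def:polygraphic-realization:inductive} (using \cref{lemma:polygraphic-realization:lift} and \cref{prop:polygraphic-realization:parallel}) and transport the inductive hypothesis to the generators of dimension $<N$ sitting in $\partial|\omega|$. The paper compresses the latter step to ``By induction''; your explicit account in terms of the colimit coprojections $|\sfA|$ acting by postcomposition and the naturality of $\ct$ under $\nabla_{l-1}$ is a faithful unpacking of exactly that shortcut.
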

\begin{proof}
    \begin{itemize}
        \item \condition{PR1} For $j < n$, by
        \cref{lemma:polygraphic-realization:generators-of-boundary}, we already
        have $\polyreal{\omega}_j = \partial \polyreal{\omega}_j = \bbOO_j /
        \omega$. In dimension $n$, the only element of $\bbOO_n / \omega$ is
        $\id : \omega \longrightarrow \omega$, which corresponds to the unique
        $n$-generator of $\polyreal{\omega}$. If $j > n$, then both $\bbOO_j /
        \omega$ and $\polyreal{\omega}_j$ are empty.
        \item \condition{PR2} and \condition{PR3} By definition, those
        hypotheses hold for the unique $n$-generator $\left( \omega \xto{\id}
        \omega \right)$ of $\polyreal{\omega}$. By induction, they also hold on
        the other generators.
        \qedhere
    \end{itemize}
\end{proof}

To conclude, we have defined a functor $\polyreal{-} : \bbOO \longrightarrow
\PolMTO$ which satisfies the \cref{ass:polygraphic-realization} for all $n \in
\bbNN$.

\subsection{The shape function}
\label{sec:shape}

In \cref{sec:equivalence}, we will define an adjunction
$
    \polyreal{-} : \PshO \adjunction \PolMTO : N
$,
where $\polyreal{-} : \PshO \longrightarrow \PolMTO$ is the left Kan extension
of the polygraphic realization defined in \cref{sec:polygraphic-realization},
and $N$ its associated nerve. In order to show that this is an adjoint
\emph{equivalence}, we need a good understanding of $N$. An crucial step is
\cref{th:shape} that establishes a bijection $\catPP_n \longrightarrow
\sum_{\omega \in \bbOO_n} (N \catPP)_\omega$, where $\catPP \in \PolMTO$ and $n
\in \bbNN$. Furthermore, we show that this bijection is \emph{over $\bbOO_n$}:
\[
    \triangleURdiagram
        {\catPP_n}{\sum_{\omega \in \bbOO_n} (N \catPP)_\omega}{\bbOO_n,}
        {\cong}{\shape{(-)}}{}
\]
where the vertical arrow maps an element in the $\omega \in \bbOO_n$ component
to $\omega$, and where $\shape{(-)}$ is the \emph{shape function} which we
define in this section.

Here is a sketch of the definition of $\shape{(-)} : \catPP_n \longrightarrow
\bbOO_n$. The cases $n = 0, 1$ are trivial, since there is a unique $0$-opetope
and a unique $1$-opetope. Assume $n \geq 2$, and take $x \in \catPP_n$. Then
the composition tree of $\src x$ is a tree whose nodes are decorated
$(n-1)$-generators, and edges by $(n-2)$-generators. Replacing these generators
by their inductively defined shapes, we obtain a tree whose nodes are decorated
by $(n-1)$-opetopes, and edges by $(n-2)$-opetopes, a.k.a. an $n$-opetope,
which we shall denote by $\shape{x}$.
\[
    \tikzinput[.7]{shape}{}
\]
The fact that $\shape{x}$ corresponds to the intuitive notion of ``shape'' of
$x$ is justified by \cref{th:shape}.

\begin{lemma}
    \label{lemma:terminal-mtop:parallel-generators}
    If $x, y \in \catTT_n$ are two parallel generators, then they are equal.
\end{lemma}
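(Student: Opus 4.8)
The plan is to prove this by induction on $n$, using the structure theorem for many-to-one polygraphs (\cref{th:structure-of-mto}) together with the explicit recursive description of $\termMTO$ in \cref{def:terminal-mtop}. The base case $n = 0$ holds because $\termMTO_0 = \{\optZero\}$ is a singleton. For the inductive step, I would take two parallel generators $x = (u, v)$ and $y = (u', v')$ in $\termMTO_{n+1}$, so that $\src x = u$, $\tgt x = v$, $\src y = u'$, $\tgt y = v'$, with $u, u' \in \termMTO_n^\MTO$ and $v, v' \in \termMTO_n$, and each pair internally parallel. The hypothesis $x \parallel y$ gives $\src x = \src y$ and $\tgt x = \tgt y$ as $n$-cells of $\termMTO^*$; that is, $u = u'$ in $\termMTO_n^*$ and $v = v'$ in $\termMTO_n$. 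Since $v, v' \in \termMTO_n$ are generators and $v = v'$ already as cells, there is nothing to prove for the target components. The real content is showing that $u = u'$ as many-to-one $n$-cells forces $x = y$; but once $u = u'$ and $v = v'$, the pairs $(u,v)$ and $(u',v')$ are literally equal, so $x = y$ as elements of $\termMTO_{n+1}$.

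Thus the only subtlety is making sure the equation $\src x = \src y$ of cells genuinely pins down the data. Here I would invoke \cref{th:structure-of-mto}(3): a many-to-one $n$-cell is obtained as a \emph{unique} composition of $n$-generators, so two many-to-one $n$-cells that are equal in $\termMTO^*_n$ are equal as formal composites. Combined with the inductive hypothesis (parallel $n$-generators of $\termMTO$ are equal), this shows that the presentation of a many-to-one $(n+1)$-cell by its source and target is faithful: the map $\termMTO_{n+1} \to \termMTO_n^\MTO \times \termMTO_n$, $(u,v) \mapsto (u,v)$, is by construction injective, so equality of both components yields equality of generators.

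The step I expect to be the main (and really the only) obstacle is the bookkeeping needed to justify that $u = u'$ at the level of $n$-cells in $\termMTO^*$ — as opposed to merely at the level of some quotient or up to the free-category relations. This is exactly where \cref{th:structure-of-mto}(3) (uniqueness of the composition, equivalently \cite[corollary 2.3.11]{Henry2017}) does the work, and one should note that it applies because $\termMTO \in \PolMTO$, which in turn needs $\termMTO_n^\MTO$ to be well-defined at each stage — this is guaranteed inductively by \cref{def:terminal-mtop}. Everything else is immediate from unwinding the definition of $\termMTO_{n+1}$ as a set of parallel pairs.
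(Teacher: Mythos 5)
Your first paragraph already contains the paper's entire proof: by definition of $\termMTO_{n+1}$, an element is literally a pair $(u,v) \in \termMTO_n^\MTO \times \termMTO_n$, with $\src(u,v) = u$ and $\tgt(u,v) = v$; so $x \parallel y$ gives $\src x = \src y$ and $\tgt x = \tgt y$, i.e.\ componentwise equality of the pairs, hence $x = y$. The paper states this as the single line $x = (\src x, \tgt x) = (\src y, \tgt y) = y$, and no induction is needed.

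The two extra paragraphs worry about a subtlety that does not exist. The hypothesis ``$\src x = \src y$'' is an equality of elements of $\termMTO_n^*$, and $u, u'$ are by construction those very elements; there is no passage through a quotient or through ``formal composites'' that could make $u$ and $u'$ equal in one sense and not another. Consequently the appeal to \cref{th:structure-of-mto}(3) (uniqueness of decomposition into generators) is not doing any work here — that result is used elsewhere in the paper, for instance in \cref{prop:terminal-mtop:opetopes}, but this lemma needs none of it. Note also that the inductive hypothesis you propose (parallel $n$-\emph{generators} of $\termMTO$ are equal) would not apply to $u$ and $u'$ anyway, since they are arbitrary many-to-one $n$-\emph{cells}, not generators. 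So while your conclusion is correct and your route ultimately collapses to the paper's, the surrounding scaffolding should be removed: the lemma is a direct unwinding of \cref{def:terminal-mtop}, nothing more.
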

\begin{proof}
    This is trivial if $n = 0$, as $\catTT$ only has one $0$-generator. If $n
    \geq 1$, note that $x = (\src x, \tgt x) = (\src y, \tgt y) = y$.
\end{proof}

\begin{proposition}
    \label{prop:terminal-mtop:opetopes}
    For $x \in \catTT_n$ there exists a unique $\shape{x} \in \bbOO_n$ such
    that the terminal morphism $\shriek : \polyreal{\shape{x}} \longrightarrow
    \catTT$ maps $\shape{x}$ (the unique $n$-generator of
    $\polyreal{\shape{x}}$) to $x$. In particular, the map $\shape{(-)} :
    \catTT_n \longrightarrow \bbOO_n$ is a bijection.
\end{proposition}
\begin{proof}
    \begin{itemize}
        \item (Uniqueness) Assume that there exists two distinct opetopes
        $\omega, \omega' \in \bbOO_k$ such that $\shriek \omega = \shriek
        \omega'$, with $k$ minimal for this property. Then necessarily, $k \geq
        2$. On the one hand, we have $\underlyingtree{\omega} =
        \underlyingtree{\ct \src \omega} = \underlyingtree{\ct \src \omega'} =
        \underlyingtree{\omega'}$. On the other hand, for $[p] \in
        \omega^\nodesymbol = (\omega')^\nodesymbol$, we have
        \begin{align*}
            \shriek \src_{[p]} \omega
            &\speq \shriek \src_{[p]} \omega
                & \text{since $\shriek$ is also }
                \polyreal{\src_{[p]} \omega} \hookrightarrow \polyreal{\omega} \rightarrow \catTT \\
            &\speq \src_{[p]} \shriek \omega
                & \text{since $\shriek$ is a morphism of polygraphs} \\
            &\speq \src_{[p]} \shriek \omega'
                & \text{by assumption} \\
            &\speq \shriek \src_{[p]} \omega'
                & \text{since $\shriek$ is a morphism of polygraphs} \\
            &\speq \shriek \src_{[p]} \omega'
                & \text{since $\shriek$ is also }
                \polyreal{\src_{[p]} \omega'} \hookrightarrow \polyreal{\omega'}
                \rightarrow \catTT,
        \end{align*}
        and by minimality of $k$, we have $\src_{[p]} \omega = \src_{[p]}
        \omega'$, for any address $[p]$. Consequently, $\omega = \omega'$, a
        contradiction.

        \item (Existence) The cases $n = 0, 1$ are trivial, so assume $n \geq
        2$, and that by induction, the result holds for all $k < n$, i.e. that
        for $g \in \catPP_k$, there is a unique opetope $\shape{g} \in \bbOO_k$
        such that $\shriek (\shape{g}) = g$. In particular the following
        two triangles commute:
        \[
            \diagramsize{2}{3}
            \triangleURdiagram
                {\polyreal{\src_{[p]} \shape{g}}}{\polyreal{\shape{g}}}{\catPP,}
                    {\polyreal{\src_{[p]}}}
                {\shriek}{\shriek}
            \qqquad
            \triangleURdiagram
                {\polyreal{\tgt \shape{g}}}{\polyreal{\shape{g}}}{\catPP ,}
                {\polyreal{\tgt}}{\shriek}{\shriek}
        \]
        where $[p] \in (\shape{g})^\nodesymbol$. Consequently,
        $\shape{(\src_{[p]} g)} = \src_{[p]} (\shape{g})$ and $\shape{(\tgt g)}
        = \tgt (\shape{g})$, and the following displays an isomorphism
        $\nabla_{n-1} \catTT \longrightarrow \optPolyFun^{n-2}$:
        \[
            \begin{tikzcd}
                \catTT_{n-2}
                    \ar[d, "\shape{(-)}"'] &
                \catTT_{n-1}^\nodesymbol
                    \ar[r, "p"]
                    \ar[d]
                    \ar[l, "\src" above] &
                \catTT_{n-1}
                    \ar[r, "\tgt"]
                    \ar[d, "\shape{(-)}" left] &
                \catTT_{n-2}
                    \ar[d, "\shape{(-)}" left] \\
                \bbOO_{n-2} &
                \bbOO_{n-1}^\nodesymbol
                    \ar[r, "p"]
                    \ar[l, "\src" above] &
                \bbOO_{n-1}
                    \ar[r, "\tgt"] &
                \bbOO_{n-2} .
            \end{tikzcd}
        \]
        Hence, the composite $\underlyingtree{\ct \src x} \xto{\ct \src x}
        \nabla_{n-1} \catTT \xto{\shape{(-)}} \optPolyFun^{n-2}$ defines an
        $n$-opetope $\shape{x}$ with $\underlyingtree{\shape{x}} =
        \underlyingtree{\ct \src x}$. We claim that $\shriek (\shape{x}) = x$. We
        first show that $\shriek \src (\shape{x}) = \src x$. We have
        \begin{align*}
            \underlyingtree{\ct \src x}
            &\speq \underlyingtree{\shape{x}}
                & \text{by definition} \\
            &\speq \underlyingtree{\ct \src (\shape{x})}
                & \text{by \condition{PR3}} \\
            &\speq \underlyingtree{\ct \shriek \src (\shape{x})}
                & \text{since $\shriek$ is a morphism of polygraphs.}
        \end{align*}
        Then, for any address $[p]$ in $\underlyingtree{\ct \src x}$, we have
        \begin{align*}
            \src_{[p]} x
            &\speq \shriek (\shape{(\src_{[p]} x)})
                & \text{by induction} \\
            &\speq \shriek \src_{[p]} (\shape{x})
                & \text{by definition of $\shape{x}$} \\
            &\speq \src_{[p]} \shriek (\shape{x})
                & \text{since $\shriek$ is a morphism of polygraphs,}
        \end{align*}
        and therefore, by \cref{prop:composition-tree-duality}, $\src x = \src
        \shriek (\shape{x})$. Next,
        \begin{align*}
            \tgt \shriek (\shape{x})
            &\speq \shriek \tgt (\shape{x})
                & \text{by induction} \\
            &\:\parallel\: \shriek \src (\shape{x})
                & \text{since } \src (\shape{x}) \parallel \tgt (\shape{x}) \\
            &\speq \src x
                & \text{showed above} \\
            &\:\parallel\: \tgt x,
        \end{align*}
        and therefore, $\tgt \shriek (\shape{x}) = \tgt x$. Finally, $\shriek
        (\shape{x}) \parallel x$, and by
        \cref{lemma:terminal-mtop:parallel-generators}, $\shriek (\shape{x}) = x$.
        \qedhere
    \end{itemize}
\end{proof}

\begin{notation}
    In the light of \cref{prop:terminal-mtop:opetopes}, we identify $\catTT_n$
    with $\bbOO_n$. This identification is compatible with faces, i.e.
    $\src_{[p]}$ and $\tgt$. Further, $\shriek : \polyreal{\omega}
    \longrightarrow \catTT$ maps a generator $(\phi \rightarrow \omega)$ to
    $\phi$.
\end{notation}

\begin{notation}
    \label{not:polygraphes-opetopes}
    For $\catPP \in \PolMTO$ and $\omega \in \bbOO_n$, let $\catPP_\omega
    \eqdef \left\{x \in \catPP_n \mid \shape{x} = \omega \right\}$. If $f :
    \catPP \longrightarrow \catQQ$ is a morphism of polygraphs, then it
    restricts and corestricts as a map $f : \catPP_\omega \longrightarrow
    \catQQ_\omega$.
\end{notation}

\begin{theorem}
    \label{th:shape}
    For $\catPP \in \PolMTO$ and $x \in \catPP_n$, there exists a unique
    pair\footnote{In \cite[proposition 2.2.3 (2)]{Henry2019}, $\shape{x}$ is
    written $\uX$ and called the \emph{universal cell} (or \emph{top cell}) of
    $x$.}
    \[
        \left( \shape{x}, \polyreal{\shape{x}} \xto{\tildX} \catPP \right)
        \:\in\: \polyreal{-} / \catPP
    \]
    such that $\tildX_n (\shape{x}) = x$. Further, the \emph{shape function}
    $\shape{(-)} : \calPP_n \longrightarrow \bbOO_n$ maps an $n$-generator $x$
    to $\shape{x} = \shriek x$, and the map
    \begin{equation}
        \label{eq:shape-tilde}
        \widetilde{(-)} : \catPP_\omega
            \longrightarrow \PolMTO (\polyreal{\omega}, \catPP)
    \end{equation}
    is a bijection\footnote{In other words, the functor $\PolMTO
    \longrightarrow \Set$ that maps a polygraph $\catPP$ to $\catPP_n$ is
    \emph{familially representable}
    (\cref{def:effective-and-familially-representable}) with
    $\left\{\polyreal{\omega} \mid \omega \in \bbOO_n \right\}$ as representing
    family.}.
\end{theorem}
\begin{proof}
    \begin{itemize}
        \item (Uniqueness) Assume $\polyreal{\omega} \xto{f} \catPP \xot{f'}
        \polyreal{\omega'}$ are different morphisms such that $f (\omega) = x =
        f' (\omega')$. Then $\shriek \omega = \shriek f (\omega) = \shriek f' (\omega') =
        \shriek \omega'$, and by \cref{prop:terminal-mtop:opetopes}, $\omega =
        \omega'$. Let $\left(\phi \xto{\sfA} \omega \right) \in
        \polyreal{\omega}_k$ be such that $f
        \left( \phi \xto{\sfA} \omega \right) \neq f' \left(\phi \xto{\sfA}
        \omega \right)$, with $k$ minimal for this property. Then $k < n$
        (since by assumption $f (\omega) = x = f' (\omega')$), and $\sfA$
        factorizes as $\left(\phi \xto{\sfJ} \psi \xto{\sfB} \omega \right)$,
        where $\sfJ$ is a face embedding, i.e. either $\tgt$ or $\src_{[p]}$
        for some $[p] \in \omega^\nodesymbol$. Then by assumption,
        \begin{align*}
            f \left( \phi \xto{\sfA} \omega \right)
            &\speq \sfJ f \left(
                \psi \xto{\sfB} \omega \right) \\
            &\speq \sfJ f' \left(
                \psi \xto{\sfB} \omega \right)
                & \text{by minimality of $k$} \\
            &\speq f' \left( \phi \xto{\sfA} \omega \right) ,
        \end{align*}
        a contradiction.

        \item (Existence) The cases $n = 0, 1$ are trivial, so assume $n \geq
        2$, and that by induction, the result holds for all $k < n$. Let
        $\shape{x} \eqdef \shriek x \in \bbOO_n$. We wish to construct a morphism
        $O [\shape{x}] \xto{\tildX} \catPP$ having $x$ in its image. For
        $\left(\psi \xto{\sfJ} \shape{x} \right)$ a face of $\shape{x}$ (i.e.
        $\tgt$ or $\src_{[p]}$ for some $[p] \in (\shape{x})^\nodesymbol$), we
        have $\shape{(\operatorname{\sfJ} x)} = \psi$, so that by induction,
        there exists a morphism $\polyreal{\psi}
        \xto{\widetilde{\operatorname{\sfJ} x}} \catPP$ having
        $\operatorname{\sfJ} x$ in its image, providing a commutative square
        \[
            \squarediagram
                {\polyreal{\psi}}{\catPP}{\polyreal{\shape{x}}}{\catTT .}
                {\widetilde{\sfJ x}}{\polyreal{\sfJ}}{\shriek}{\shriek}
        \]
        To alleviate upcoming notations, write $\bar{\sfJ} \eqdef
        \widetilde{\operatorname{\sfJ} x} : \polyreal{\psi} \longrightarrow
        \catPP$. Let $\left( \phi \xto{\sfA} \shape{x} \right) \in \bbOO_{< n}
        / \shape{x}$. If $\sfA$ is a face embedding, define $\bar{\sfA}$ as
        before. If not, then it factors through a face embedding as $\sfA =
        \left( \phi \xto{\sfJ} \psi \xto{\sfB} \omega \right)$, and let
        $\bar{\sfA} \eqdef \bar{\sfB} \cdot \polyreal{\sfJ}$. Then the left
        square commutes, and passing to the colimit over $\bbOO_{< n} /
        \shape{x}$, we obtain the right square:
        \[
            \squarediagram
                {\polyreal{\phi}}{\catPP}{\polyreal{\shape{x}}}{\catTT ,}
                {\bar{\sfA}}{\polyreal{a}}{\shriek}{\shriek}
            \qqquad
            \squarediagram
                {\partial \polyreal{\shape{x}}}{\catPP}{\polyreal{\shape{x}}}{\catTT .}
                {f}{}{\shriek}{\shriek}
        \]
        We want a diagonal filler of the right square. Since
        $\polyreal{\shape{x}}$ is a one-generator cellular extension of
        $\partial \polyreal{\shape{x}}$
        (\cref{def:polygraphic-realization:inductive}), it is enough to check
        that $f \src \shape{x} = \src x$, and $f \tgt \shape{x} = \tgt x$. The
        latter is clear, as $f$ extends $\bar{\tgt} : \polyreal{\tgt
        \shape{x}} \longrightarrow \catPP$, and $f \tgt \shape{x} = \bar{\tgt}
        \tgt \shape{x} = \tgt x$ by definition. We now proceed to prove the
        former. First, $\underlyingtree{\ct \src \shape{x}} =
        \underlyingtree{\ct \src x}$ since both are mapped to the same element
        of $\catTT_n$. Then, for $[p]$ a node address of $\ct \src \shape{x}$,
        we have $f \src_{[p]} \shape{x} = \overline{\src_{[p]}} \src_{[p]}
        \shape{x} = \src_{[p]} x$. Hence $f \src \shape{x} = \src x$.
        \qedhere
    \end{itemize}
\end{proof}

\subsection{The adjoint equivalence}
\label{sec:equivalence}

\begin{definition}
    [Polygraphic realization-nerve adjunction]
    \label{def:polygraphic-realization-nerve}
    The polygraphic realization functor $\polyreal{-} : \bbOO \longrightarrow
    \PolMTO$ extends to a left adjoint
    \[
        \polyreal{-} : \PshO \adjunction \PolMTO : N ,
    \]
    by left Kan extension of $\polyreal{-} : \bbOO \longrightarrow \PolMTO$
    along the Yoneda embedding $\yoneda : \bbOO \longrightarrow \PshO$.
    Explicitly, the polygraphic realization of an opetopic set $X \in \PshO$
    can be computed with the coend on the left, while the \emph{polygraphic
    nerve} $N \catPP$ of a polygraph $\catPP \in
    \PolMTO$ is given on the right:
    \[
        \polyreal{X}
        \speq \int^{\omega \in \bbOO} X_\omega \times \polyreal{\omega} ,
        \qqquad
        N \catPP
        \speq \PolMTO (\polyreal{-}, \catPP) : \bbOO^\op \longrightarrow \Set .
    \]
\end{definition}

\begin{theorem}
    \label{th:opetopic-sets-mto-polygraphs}
    The unit and counit are natural isomorphisms. Consequently, the polygraphic
    realization-nerve adjunction of \cref{def:polygraphic-realization-nerve} is
    an adjoint equivalence between $\PshO$ and $\PolMTO$.
\end{theorem}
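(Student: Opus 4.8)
The plan is to show that both the unit $\eta$ and the counit $\epsilon$ of \cref{def:polygraphic-realization-nerve} are objectwise isomorphisms. For $\eta$ this amounts to each component $\eta_X \colon X_\omega \to (N|X|)_\omega$ being a bijection, since isomorphisms in the presheaf category $\PshO$ are detected pointwise. For $\epsilon$ it suffices that each $\epsilon_P \colon |NP| \to P$ be bijective on $n$-generators for every $n$: indeed, by \cref{th:structure-of-mto} every cell of a many-to-one polygraph is a \emph{unique} composite of generators, so a generator-bijective morphism of polygraphs is bijective on all cells, hence an isomorphism.

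The main ingredient I would establish first is a description of the generators of a realization: for $X \in \PshO$ and $n \in \bbNN$, there is a natural bijection $|X|_n \cong \sum_{\phi \in \bbOO_n} X_\phi$ under which $y \in X_\phi$ corresponds to the class $\bbinom{y}{\id_\phi}$, and which is compatible with the shape function in the sense that $\shape{\bbinom{y}{\id_\phi}} = \phi$. To see this, note that $(-)_n \colon \PolMTO \to \Set$ preserves colimits (\cref{th:structure-of-mto}), hence commutes with the coend defining $|X|$; since $|\omega|_n = \bbOO_n / \omega = \sum_{\phi \in \bbOO_n} \bbOO(\phi, \omega)$ for every opetope $\omega$ (\cref{axiom:polygraphic-realization}, \condition{PR1}, which holds in every dimension), the co-Yoneda lemma yields
\[
	|X|_n \;\cong\; \int^{\omega \in \bbOO} X_\omega \times |\omega|_n \;\cong\; \sum_{\phi \in \bbOO_n} \int^{\omega \in \bbOO} X_\omega \times \bbOO(\phi, \omega) \;\cong\; \sum_{\phi \in \bbOO_n} X_\phi ,
\]
and tracing through the co-Yoneda isomorphism identifies the composite as $y \mapsto \bbinom{y}{\id_\phi}$; the shape compatibility follows because the terminal morphism $|X| \to \termMTO$ sends $\bbinom{y}{\id_\phi}$ to $\phi$.

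Granting this, the unit is handled as follows. By \cref{th:shape} applied to $P = |X|$, the map $\widetilde{(-)} \colon |X|_n \to \sum_{\omega \in \bbOO_n} \PolMTO(|\omega|, |X|) = \sum_{\omega \in \bbOO_n} (N|X|)_\omega$ is a bijection; composing it with the bijection of the previous paragraph and using $\shape{\bbinom{x}{\id_\omega}} = \omega$, the result is the map $\sum_\omega X_\omega \to \sum_\omega (N|X|)_\omega$ sending $(\omega, x)$ to $(\omega, \widetilde{\bbinom{x}{\id_\omega}}) = (\omega, \eta_X(x))$. A coproduct map of this form is a bijection iff every summand map is, so each $\eta_X \colon X_\omega \to (N|X|)_\omega$ is a bijection and $\eta$ is a natural isomorphism. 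For the counit, fix $P \in \PolMTO$: by \cref{th:shape} the map $\widetilde{(-)} \colon P_n \to \sum_\omega (NP)_\omega$ is a bijection, and by the previous paragraph applied to $X = NP$ so is $\sum_\omega (NP)_\omega \to |NP|_n$, $(\omega, g) \mapsto \bbinom{g}{\id_\omega}$; their composite $x \mapsto \bbinom{\tildX}{\id_{\shape x}}$ is thus a bijection $P_n \to |NP|_n$. But $\epsilon_P$ sends $\bbinom{\tildX}{\id_{\shape x}}$ back to $x$ by \cref{th:shape}, so $\epsilon_P$ is bijective on $n$-generators for every $n$, hence an isomorphism of polygraphs by the reduction above; thus $\epsilon$ is a natural isomorphism.

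Since the unit and the counit of the adjunction are both natural isomorphisms, the adjunction is an adjoint equivalence between $\PshO$ and $\PolMTO$, which is the assertion. The only step requiring more than diagram-chasing and bookkeeping is the second paragraph: recognising $|X|_n$ as a coend that $(-)_n$ turns into the co-Yoneda colimit, and checking that the induced identification is exactly $y \mapsto \bbinom{y}{\id_\phi}$ and respects the terminal map, so that \cref{th:shape} plugs in cleanly. Everything afterwards is a composition of bijections already provided by \cref{th:shape}.
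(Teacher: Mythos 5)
Your proof is correct, and although it lands on the same essential facts as the paper's (the bijection of \cref{th:shape} and the structure theorem \cref{th:structure-of-mto}), it is organised differently. You introduce an explicit lemma — the pointwise co-Yoneda computation $|X|_n \cong \sum_{\phi \in \bbOO_n} X_\phi$, identified concretely with $y \mapsto \bbinom{y}{\id_\phi}$ and shown compatible with shapes — and you derive both unit and counit isomorphisms directly from it by composing bijections. The paper is terser: for the unit it simply \emph{asserts} that $\bbinom{x}{\id_\omega}=\bbinom{y}{\id_\omega}$ implies $x=y$ (which is exactly what your co-Yoneda lemma proves), and for the counit it uses the triangle identity $N\epsilon\cdot\eta N=\id$ together with $\eta$ being an isomorphism to get $N\epsilon$ invertible, then a commuting square involving $\widetilde{(-)}$ to conclude $\epsilon$ is bijective on generators. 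Your route avoids the triangle identity altogether and treats $\eta$ and $\epsilon$ symmetrically; the price is the extra bookkeeping of the coend lemma, but the benefit is that you make the injectivity step for $\eta$ rigorous rather than a bare remark, and the counit argument is self-contained rather than parasitic on the unit. Both approaches rely on the same reduction — a polygraph morphism bijective on generators is an isomorphism, justified by \cref{th:structure-of-mto} — so the logical content is equivalent, but yours is the more explicit account.
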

\begin{proof}
    Let $X \in \PshO$ and $\catPP \in \PolMTO$.  Note that with the nerve
    functor of \cref{def:polygraphic-realization-nerve}, the bijection of
    \cref{eq:shape-tilde} becomes
    $
        \widetilde{(-)} : \catPP_\omega \longrightarrow (N \catPP)_\omega
    $.
    An $n$-generator $x \in \catPP_n$ then corresponds to a cell $\tildX \in N
    \catPP_\omega$, where $\omega \eqdef \shape{x}$, in other words, the shape
    function partitions the set of $n$-generators to form an opetopic set $N
    \catPP$. In the converse direction, for $X \in \PshO$,
    \begin{align*}
        \polyreal{X}_n
        &\speq \int^{\omega \in \bbOO} X_\omega \times \polyreal{\omega}_n
            & \text{by \cref{def:polygraphic-realization-nerve}} \\
        &\spcong \int^{\omega \in \bbOO} X_\omega \times \bbOO_n / \omega
            & \text{see \cref{ass:polygraphic-realization}, \condition{PR1}} \\
        &\spcong \int^{\omega \in \bbOO}
            X_\omega \times \sum_{\psi \in \bbOO_n} \bbOO (\psi, \omega) \\
        &\spcong \sum_{\psi \in \bbOO_n} \int^{\omega \in \bbOO}
            X_\omega \times \bbOO (\psi, \omega) \\
        &\spcong \sum_{\psi \in \bbOO_n} X_\psi ,
    \end{align*}
    so the set of $n$-generators of $\polyreal{X}$ is the set of $n$-cells of
    $X$. In particular, $\polyreal{X}_\omega \cong X_\omega$. Finally, the unit
    $\eta : X \longrightarrow N \polyreal{X}$ and counit $\epsilon :
    \polyreal{N \catPP} \longrightarrow \catPP$ are the following composites
    \[
        X_\omega
        \xto{\cong} \polyreal{X}_\omega
        \xto{\widetilde{(-)}} \left( N \polyreal{X} \right)_\omega ,
    \]
    \[
        \polyreal{N \catPP}_n
        \xto{=} \sum_{\omega \in \bbOO_n} \polyreal{N \catPP}_\omega
        \xto{\cong} \sum_{\omega \in \bbOO_n} (N \catPP)_\omega
        \xto{\widetilde{(-)}^{-1}} \sum_{\omega \in \bbOO_n} \catPP_\omega
        \xto{=} \catPP_n ,
    \]
    which by definition are isomorphisms.
\end{proof}

Many-to-one polygraphs have been the subject of other work \cite{Harnik2002}
\cite{Harnik2008}, and proved to be equivalent to the notion of
\emph{multitopic sets}. This, together with
\cref{th:opetopic-sets-mto-polygraphs}, prove the following:

\begin{corollary}
    The category $\PshO$ of opetopic sets is equivalent to the category of
    multitopic sets.
\end{corollary}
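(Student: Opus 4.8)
The plan is to deduce the corollary by composing two equivalences of categories. The first is the adjoint equivalence $|-| : \PshO \adjunction \PolMTO : N$ just established in the preceding theorem (built on \cref{def:polygraphic-realization-nerve}). The second is the equivalence between $\PolMTO$ and the category of multitopic sets already available in the literature: it is shown in \cite{Harnik2002, Harnik2008} that the category of multitopic sets is equivalent to the category of many-to-one computads.

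The key steps, in order, would be these. First, recall the precise statement of that cited equivalence and reconcile the terminology: a many-to-one computad in the sense of \cite{Harnik2008} is exactly a many-to-one polygraph in the sense of \cref{def:many-to-one-polygraph} --- in both, the target of every generator is required to be a generator --- so the two source categories agree strictly, not merely up to equivalence. Second, fix an equivalence $E$ from $\PolMTO$ to the category of multitopic sets furnished by that reference, and form the composite $E \circ N$, which runs from $\PshO$ through $\PolMTO$ to the category of multitopic sets. Third, invoke the fact that a composite of equivalences is again an equivalence; this finishes the argument.

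There is no genuine mathematical obstacle here: once the two inputs are assembled, the statement is purely formal, being transitivity of ``is equivalent to''. The only point that needs care --- the ``main obstacle'' in a pedantic sense --- is that more than one definition of \emph{multitopic set} appears in the literature (\cite{Hermida2002, Harnik2008}); I would state explicitly which one is meant and cite the comparison establishing that these definitions yield equivalent categories, so that the corollary is unambiguous.
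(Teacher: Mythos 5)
Your proposal matches the paper's argument exactly: the corollary is obtained by composing the adjoint equivalence $\PshO \simeq \PolMTO$ just established with the known equivalence between $\PolMTO$ and multitopic sets from \cite{Harnik2002, Harnik2008}. The additional remarks on reconciling terminology are reasonable due diligence but do not change the route.
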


An \emph{opetopic plex} is an opetopic polyplex of the form $\uU$, where $u \in
\catTT_n$ (as opposed to $\catTT^*_n$). In \cite[corollary 2.4.9 and remark
2.5.1]{Henry2019}, Henry shows that $\PolMTO$ is a presheaf category over some
category $\Oplex$ of opetopic plexes, and asks wether they are the same as
opetopes. We now answer this question positively.

\begin{definition}
    [Cauchy-complete category]
    \label{def:cauchy-complete}
    An idempotent morphism $e : a \longrightarrow a$ \emph{splits} if it
    decomposes as $e = i r$ with $r i = \id_a$. A category is
    \emph{Cauchy-complete} if all its idempotent morphisms split.
\end{definition}

\begin{theorem}
    [{\cite[theorem 1]{Borceux1986}}]
    \label{th:cauchy-complete}
    Let $\catAA$ and $\catBB$ be Cauchy-complete categories. An equivalence of
    categories $\Psh\catAA \xto{\simeq} \Psh\catBB$ restricts and corestricts
    to the representable presheaves, or in other words, to an equivalence
    $\catAA \xto{\simeq} \catBB$.
\end{theorem}

\begin{corollary}
    \label{coroll:opt-oplex}
    The category $\bbOO\mathrm{plex}$ of opetopic plexes is equivalent to
    $\bbOO$.
\end{corollary}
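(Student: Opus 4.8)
The strategy is to leverage the equivalence $\PshO \simeq \PolMTO$ just established together with Henry's identification $\PolMTO \simeq \Set^{\bbOO\mathrm{plex}^{\op}}$, and to invoke Morita theory for presheaf categories. Recall that for a small category $\mathcal{C}$, the Cauchy (= idempotent) completion $\overline{\mathcal{C}}$ is recovered inside $\Set^{\mathcal{C}^{\op}}$ as the full subcategory of \emph{tiny} objects (those $T$ with $\Set^{\mathcal{C}^{\op}}(T,-)$ cocontinuous), equivalently the retracts of representables. Since any equivalence of categories preserves tiny objects, an equivalence $\Set^{\mathcal{C}^{\op}} \simeq \Set^{\mathcal{D}^{\op}}$ induces $\overline{\mathcal{C}} \simeq \overline{\mathcal{D}}$. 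Taking $\mathcal{C} = \bbOO$ and $\mathcal{D} = \bbOO\mathrm{plex}$ yields $\overline{\bbOO} \simeq \overline{\bbOO\mathrm{plex}}$.

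It then suffices to observe that both $\bbOO$ and $\bbOO\mathrm{plex}$ are already idempotent-complete, so that $\overline{\bbOO} \simeq \bbOO$ and $\overline{\bbOO\mathrm{plex}} \simeq \bbOO\mathrm{plex}$. For $\bbOO$ this is immediate from \cref{def:o}: every generating morphism is a source or target embedding $\psi \to \omega$ with $\dim \psi < \dim \omega$, hence every non-identity morphism strictly raises dimension, so $\bbOO(\omega,\omega) = \{\id_\omega\}$ and the only idempotents are identities. The category $\bbOO\mathrm{plex}$ has the same feature by construction (Henry's opetopic plexes carry a grading in which all endomorphisms are identities). One can moreover make the equivalence explicit: the left Kan extension $|-| \colon \PshO \to \PolMTO$ of \cref{def:polygraphic-realization-nerve} is an equivalence and restricts along the Yoneda embedding to the original $|-| \colon \bbOO \to \PolMTO$, which is therefore fully faithful; by \cref{th:shape} the functor $P \mapsto \PolMTO(|\omega|,P)$ equals $P \mapsto (NP)(\omega)$, i.e.\ evaluation at $\omega$ composed with the equivalence $N$, hence is cocontinuous, so each $|\omega|$ is a tiny object of $\PolMTO$; and comparing the decomposition $P_n \cong \sum_{\omega \in \bbOO_n} \PolMTO(|\omega|,P)$ of \cref{th:shape} with Henry's $P_n \cong \sum_{x \in X_n} \PolMTO(x,P)$ over his generating representables identifies $\{\,|\omega| : \omega \in \bbOO_n\,\}$ with $X_n$. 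Thus $|-|$ is fully faithful and essentially surjective onto the representables, i.e.\ an equivalence $\bbOO \xrightarrow{\ \simeq\ } \bbOO\mathrm{plex}$.

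The only real obstacle is bookkeeping with the conventions of \cite{Henry2017}: one must check that $\bbOO\mathrm{plex}$ is (equivalent to) the full subcategory of $\PolMTO$ on the representable many-to-one polygraphs, and that these are precisely the $|\omega|$ --- equivalently, that $\bbOO\mathrm{plex}$ is idempotent-complete. Granting that, everything else is formal: the behaviour of tiny objects under equivalences, the triviality of idempotents in $\bbOO$, and the equivalence $\PshO \simeq \PolMTO$ established above.
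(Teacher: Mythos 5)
Your argument is correct, but it takes a more formal Morita-theoretic route than the paper does. The paper's proof is more direct and leans on machinery it has already set up: by \cite[Proposition 2.2.3]{Henry2017}, opetopic plexes are precisely the generators of the terminal many-to-one polygraph $\termMTO$, and \cref{prop:terminal-mtop:opetopes} has already established a bijection between the generators of $\termMTO$ and opetopes — so the object-level identification falls out immediately, without any discussion of Cauchy completion or tiny objects. For morphisms, the paper simply notes that maps of opetopic plexes are (by Henry's definition) maps of polygraphs between the induced representables, which the Yoneda lemma identifies with maps of opetopes. Your approach instead invokes the general principle that an equivalence of presheaf categories induces an equivalence of Cauchy completions, then checks that both $\bbOO$ and $\bbOO\mathrm{plex}$ have only trivial idempotents. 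This is a valid alternative, and the explicit variant you sketch at the end (fully faithfulness of $|-|$ along Yoneda, tininess of the $|\omega|$, matching with Henry's $X_n$) is essentially a proof that your black-box Morita argument can be unwound. The trade-off is that your route requires separately verifying idempotent-completeness of $\bbOO\mathrm{plex}$ (which you flag as remaining bookkeeping), whereas the paper's route sidesteps this entirely because $\cref{prop:terminal-mtop:opetopes}$ already matches up the objects on the nose; on the other hand, your argument is more robust in that it would go through even if the two categories were only Morita-equivalent a priori.
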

\begin{proof}
    By definition, $\bbOO$ is a directed category, and by \cite[proposition
    2.2.3 (4)]{Henry2019}, so is $\bbOO\mathrm{plex}$. In particular, they are
    both Cauchy-complete. On the other hand, $\PshO \simeq \PolMTO \simeq
    \Psh{\bbOO\mathrm{plex}}$, and we conclude using \cref{th:cauchy-complete}.
\end{proof}

In \cite{Palm2004}, Palm studies another approach to weak higher-dimensional
categories, based on \emph{dendrotopic sets}, and show that dendrotopic sets
are equivalent to many-to-one polygraphs. Therefore,

\begin{corollary}
    \label{coroll:opt-dendro}
    Opetopic sets are equivalent to dendrotopic sets.
\end{corollary}

\section{Conclusion}

To conclude, here is a diagram of the various approaches to the notion of
pasting diagrams and ``many-to-one shapes'', and how they are related
throughout the litterature and in this paper:

\begin{figure} [ht]
    \tikzinput{misc-diagrams}{mto-polygraphs}
\end{figure}

\bibliographystyle{alpha}
\bibliography{bibliography}

\end{document}